\theoremstyle{plain}
\newtheorem{prop}{Proposition}[section]
\newtheorem{thm}[prop]{Theorem}
\newtheorem{lem}[prop]{Lemma}
\theoremstyle{remark}
\newtheorem{rem}{Remark}
\newcommand{\R}{{\mathbb{R}}}
\newcommand{\Dtt}{D_{\mathrm{tt}}}
\newcommand{\Dt}{D_{\mathrm{t}}}
\newcommand{\M}{\mathcal{M}}
\newcommand{\N}{\mathcal{N}}
\newcommand{\T}{\mathrm{T}}
\renewcommand{\S}{\mathcal{S}}
\newcommand{\Id}{\mathrm{Id}}
\newcommand{\IdM}{{\mathbb{1}}}
\newcommand{\SO}{\mathrm{SO}}
\renewcommand{\O}{\mathrm{O}}
\renewcommand{\P}{\mathrm{P}}
\newcommand{\Q}{\mathrm{Q}}
\newcommand{\dist}{{\bf{d}}}
\newcommand{\n}{{\bf{n}}}
\newcommand{\match}{\mathrm{match}}
\newcommand{\mem}{\mathrm{mem}}
\newcommand{\bend}{\mathrm{bend}}
\newcommand{\vol}{\mathrm{vol}}
\newcommand{\diag}{\mathrm{diag}}
\newcommand{\eps}{\varepsilon}
\newcommand{\wkto}{\rightharpoonup}
\DeclareMathOperator{\Div}{div}
\DeclareMathOperator{\cof}{Cof}
\DeclareMathOperator{\tr}{tr}
\DeclareMathOperator{\supp}{supp}
\DeclareMathOperator{\diam}{diam}
\DeclareMathOperator{\dd}{d\!}
\newcommand{\gs}{\geqslant}
\newcommand{\ls}{\leqslant}
\newcommand{\Srt}{{\mathcal{S}_1^{\frac{1}{2}}}}
\newcommand{\SrtI}{{\mathcal{S}_1^{-\frac{1}{2}}}}
\newcommand{\SSrt}{{\mathcal{S}_2^{\frac{1}{2}}}}
\newcommand{\SSrtI}{{\mathcal{S}_2^{-\frac{1}{2}}}}
\renewcommand{\n}{{\bf{n}}}
\newcommand\restr[2]{{
  \left.\kern-\nulldelimiterspace 
  #1 
  \vphantom{\big|} 
  \right|_{#2} 
  }}
\begin{document}
\title{Symmetry and scaling limits for matching of implicit surfaces based on thin shell energies\footnotetext{2020 Mathematics Subject Classification: 49J45, 65D18, 74K25.}}
\author{Jos\'{e} A.~Iglesias\thanks{Johann Radon Institute for Computational and Applied Mathematics (RICAM), Austrian Academy of Sciences, Linz, Austria (\texttt{jose.iglesias{@}ricam.oeaw.ac.at})}}
\date{}
\maketitle

\begin{abstract}
In a recent paper by Iglesias, Rumpf and Scherzer (Found. Comput. Math. 18(4), 2018) a variational model for deformations matching a pair of shapes given as level set functions was proposed. Its main feature is the presence of anisotropic energies active only in a narrow band around the hypersurfaces that resemble the behavior of elastic shells. In this work we consider some extensions and further analysis of that model. First, we present a symmetric energy functional such that given two particular shapes, it assigns the same energy to any given deformation as to its inverse when the roles of the shapes are interchanged, and introduce the adequate parameter scaling to recover a surface problem when the width of the narrow band vanishes. Then, we obtain existence of minimizing deformations for the symmetric energy in classes of bi-Sobolev homeomorphisms for small enough widths, and prove a $\Gamma$-convergence result for the corresponding non-symmetric energies as the width tends to zero. Finally, numerical results on realistic shape matching applications demonstrating the effect of the symmetric energy are presented.
\end{abstract}

\section{Introduction}
We are interested in variational methods for the matching of implicit shapes, in which an energy for deformations defined in a computational domain containing both shapes is minimized. More specifically, we are given two embedded $C^2$ diffeomorphic hypersurfaces $\M_1, \M_2 \subset \Omega \subset \R^d$, where $\Omega$ is an open bounded domain with Lipschitz boundary, and we work with models formulated through the signed distance functions $\dist_i$ to $\M_i$. The matching is then accomplished through a deformation $\phi: \Omega \to \Omega$ such that $\phi(\M_1) \approx \M_2$ and with the aim that perceptually similar regions of $\M_1$ and $\M_2$ correspond to each other. The particular notion of similarity we use is derived from variational integrals penalizing distortion along the tangent spaces of the $\M_i$, and mismatch of their curvatures in a tensorial fashion through their shape operators.

In this context, we say that an energy is \textit{symmetric} if it assigns the same value to a deformation for matching two shapes and to the inverse of the deformation when matching the shapes in the opposite order. This kind of consistence is not at all guaranteed when formulating such a model, yet it is often desirable. Besides basic conceptual reasons, many applications of statistical analysis like Fr\'echet means or PCA on spaces of shapes are based on similarity measures. One possible choice (see the overview \cite{RuWi10}) are those based on deformation energies, in which case symmetry is clearly advantageous. Another particular situation where such symmetry would be desirable is the time-discrete geodesic calculus for shapes \cite{RumWir13, RumWir15}, a framework in which a deformation energy can be used to induce a Riemannian distance. In that case one expects the continuous geodesics being approximated to be invariant with respect to time reversal, and a symmetric energy ensures this reversibility already on the discrete level.

We introduce in Section \ref{subsec:symenergy} a new symmetric energy consisting of a matching penalization term for the constraint $\phi(\M_1)=\M_2$, a membrane term that penalizes tangential distortion, a bending-like term that induces curvature matching of the initial and target hypersurfaces, and an additional regularization based on an hyperelastic bulk energy. This structure was also used in the less refined energies already proposed in \cite{IglBerRumSch13} and \cite{IglRumSch18}. Each term of the new energy is symmetric with respect to switching the hypersurfaces with each other and the deformation for its inverse. Moreover, the first three energy contributions arise only from narrow bands $\mathcal{N}_\sigma \M_i = \{ x \in \Omega \,|\, -\sigma < \dist_i(x) < \sigma \}$, as an approximation of their influence only on the hypersurfaces to be matched.

These membrane and bending-like energies are centered around the projected tangential derivative construction introduced in \cite{IglRumSch18}, which is specific to level set matching. By considering the deformed area only along the tangent spaces of the offsets to the target surface, it allows for relaxing the constraint $\phi(\M_1)=\M_2$ while avoiding oscillations that would arise when attempting to keep the deformations fully isometric \cite[Sec.~4.1]{IglRumSch18}. This derivative is composed with explicit bounded, coercive, frame-invariant and isotropic stored energy functions which attain their global minimum at a single energy well in $\SO(d)$, a fact proved in Lemma \ref{lem:Wgeneral}. The membrane energy measures distortion of the projected tangential derivative through this stored energy function directly, while the bending-like term additionally uses anisotropy and non-identity resting configurations to penalize mismatch of curvatures of the $\M_i$ through $\phi$. This notion of projected tangential derivative is not just weakly continuous \cite[Lem.~4.1]{IglRumSch18} but in fact gives rise to polyconvex energy densities, as we show in Lemma \ref{lem:polyconvexity}. Combined with an a priori estimate given in Lemma \ref{lem:distest} for the maximum mismatch of the shapes in terms of the strength of the matching penalization, these lower semicontinuity properties are used in Theorem \ref{thm:existence} to prove existence of minimizers in classes appropriate to the symmetry with respect to inversion, that is, bi-Sobolev deformations.

An obvious price that is paid to work in the level set framework is the increase of dimension of the domain, and this is equally true for the nonlinear, thin-shell based matching energies used in \cite{IglBerRumSch13, IglRumSch18} and for the current work. We aim to offer further theoretical justification for this family of matching energies by studying in Theorem \ref{thm:gammaconv} the membrane limits of a non-symmetric version of the energy as the thickness of the narrow bands tends to zero and the matching penalization becomes exact, so that the resulting energy has terms defined purely on the hypersurfaces. In this situation, the projected tangential derivative trivializes the quasiconvexification usually appearing in this kind of limit (the membrane energy of \cite{LedRao95}), so the structure of the energies used is preserved. A limitation is that we are only able to perform this asymptotic analysis for energies that do not enforce injectivity of the deformations, with the consequence that the new symmetric energy is not covered. The development of the tools that would be needed to naturally derive this kind of results with injective Sobolev deformations is a major problem in the theory of nonlinear elasticity, with partial solutions available only in two dimensions (see the end of Section \ref{sec:limits} for some discussion).

On the numerical level, the increase of dimension is mitigated by the use of multiscale descent schemes on adaptive meshes which are subdivided only around the input surfaces or curves. We present in Section \ref{sec:numerics} numerical examples computed with such a method for the new symmetric energies, showing a marked improvement in symmetry with respect to a non-symmetric version of the energy. These computations are based on a linear finite element discretization on octree grids where each cube is divided into tetrahedra. Such grids allow for fast indexing of degrees of freedom, indispensable for the use of coefficients depending on the deformed configuration, which is pervasive in our definition of the energy.

\subsection{Related work}
Our main focus is the formulation of symmetric energies, as defined above. The use of such energies for image registration for medical image registration was proposed already in \cite{CacRey00}. More recently, distances based on symmetrized hyperelastic volume energies (without tangential terms) were used for the analysis of cell shapes extracted from fluorescence microscope images in \cite{KouSleRoh15}. Outside imaging applications, the use of symmetric energies for modelling of nonlinear elasticity is advocated in \cite{IwaOni09}.

Our formulations have some common points with the modelling of thin shells through signed distance functions in \cite{DelZol95}. Thin structures and dimension reduction are a foundational topic in mathematical elasticity, treated by a vast number of works. On a general level, we mention only the book \cite{Cia00} for a thorough introduction to the modelling and analysis of shell problems, and \cite{FriJamMul06} as a starting point for the literature on nonlinear scaling limits obtained by $\Gamma$-convergence. The main techniques we use for our dimension reduction result arise from the membrane problem \cite{LedRao95, LedRao96} and problems of thin inclusions or `welding' \cite{AceButPer88, BesKraMic09}.

A number of works deal with shape analysis tasks using formulations based in linearized elasticity, like \cite{FuJuScYa09}. Shape matching using nonlinear thin shell energies has been tackled for parametric domains in \cite{DrLiRuSc05} and for triangulated surfaces in \cite{WiScSc11, EzuEtAl19}. Some precedents for shape analysis based on signed distance functions are \cite{DelZol94} and \cite{CharFauFer05}.

Another prominent body of work in mathematical shape analysis is that dealing with shape spaces from an intrinsic Riemannian perspective \cite{BauBruMic14}. This point of view has recently \cite{BauBruChaMol19, BauChaHarHsi20} been combined with varifold similarity metrics for shape matching without needing to estimate reparametrizations.

Our models are based on polyconvex energy functions, and there are also a number of works applying these for shape averaging \cite{RuWi08}, image registration \cite{DroRum04, BurModRut13}, or as part of joint registration/segmentation models \cite{DebEtAl20}.

\subsection{Notation}\label{subsec:notation}
\begin{itemize}
\item The euclidean inner product of two vectors $v,w \in \R^d$ is denoted by $v \cdot w$, and the Frobenius inner product of two square matrices $A, B \in \R^{d \times d}$ by $A:B=\tr(A^\T B)$. In both cases, $|v|$ or $|A|$ denotes the corresponding norm induced by these inner products. We denote the tensor product of $v,w \in \R^d$ by $v \otimes w = v w^\T \in \R^{d \times d}$.
\item $\Omega \subset \R^d$ is a bounded domain, with strongly Lipschitz boundary (that is, it can be locally expressed as the graph of a Lipschitz function). For scalar functions $u: \Omega \to \R$ we denote by $\nabla u$ their usual gradient and by $D^2 u$ the Hessian matrix, while for vector fields $\phi: \Omega \to \R$ we denote the Jacobian matrix by $(D\phi)_{ij} = \partial_{j} \phi^{i}$.
\item The identity function is denoted by $\Id$, whereas $\IdM \in \R^{d \times d}$ stands for the identity matrix.
\item For $i=1,2$, $\M_i \subset \Omega$ are compact $C^2$ hypersurfaces diffeomorphic to each other, and $\dist_i$ denote the signed distance to them, with the convention that these are negative in the interior components induced by $\M_i$. With $\n_i(x) := \nabla \dist_i(x)$ we denote the outer normal vectors to the offset hypersurfaces $\{y \,|\, \dist_i(y)=\dist_i(x)\}$ of $\M_i$, and by $\P_i:=\IdM - \n_i \otimes \n_i$ the orthogonal projections onto the corresponding tangent spaces. 
\item Noticing that the shape operators of the offset hypersurfaces to $\M_i$ can be read off $D^2 \dist_i$ (see \cite[Lem.~14.17]{GilTru01}), we use the notation $\S_i := \mathcal{R} (D^2 \dist_i + \n_i \otimes \n_i)$ for uniformly positive definite matrices derived from them, where $\mathcal{R}$ is a regularized absolute value function for matrices discussed in Section \ref{subsec:tangential}.
\item $\N_r \M_i := \{x \in \R^d \mid |\dist_i(x)| \leq r \}$ denote tubular neighborhoods of width $r$ of $\M_i$.
\item Occasionally we write $\Dt\phi := D\phi\, \P_1$ for the standard tangential derivative along the tangent spaces of the offsets to $\M_1$, while $\Dtt\phi := (\P_2\, \circ \phi) D\phi\, \P_1 + (\n_2 \circ \phi) \otimes \n_1$ is the projected tangential derivative (see Section \ref{subsec:tangential}) for measuring tangential distortion of a deformation $\phi$ attempting to match $\M_1$ onto $\M_2$.
\item $\Lambda[M, N, A, v, w]:=\P_2 N^{\frac{1}{2}} \P_2 A \P_1 M^{-\frac{1}{2}} \P_1 + w \otimes v$ for $A \in R^{d \times d}$ arbitrary, $M, N \in \R^{d \times d}$ symmetric positive definite, $v,w \in \R^d$ are classifier matrices for the purpose of curvature matching (when applied to $\S_i, D\phi$ and $\n_i$, see Section \ref{subsec:tangential}).
\item For a given unit vector $e \in \R^d$ we denote by $\Q(e)\in \SO(d)$ any proper rotation such that $\Q(e) e_d = e$, where $e_d$ denotes the $d$-th element of the canonical basis of $\R^d$. This condition does not specify a unique $\Q(e)$, but the properties above will be the only ones used for $Q$.
\item Deformations considered as candidates for matching $\M_1$ to $\M_2$ are usually denoted by $\phi:\Omega \to \Omega$, while `inverse' deformations that should match $\M_2$ to $\M_1$ are denoted by $\psi$. 
\item $C$ denotes an unspecified positive constant, which could be different in each appearance, even inside the same line.
\end{itemize}

\section{Symmetric level set matching energies}\label{sec:energies}

We aim to formulate a matching energy which is symmetric with respect to simultaneously swapping the input shapes and taking the inverse of the deformation. To this end, we consider explicit penalization of the inverse deformations in all of the energy terms. Our starting point is the observation that for regular enough deformations, integral energies associated to the inverse deformation can be computed in the original domain through a change of variables.

Let $p>d$ and $\phi \in W^{1,p}(\Omega; \R^d)$ be such that its continuous representative is an homeomorphism and $\phi^{-1} \in W^{1,p}(\Omega; \R^d)$ (i.e., $\phi$ is $p$-bi-Sobolev). Since $p>d$, $\phi$ has the Lusin N-property \cite[Theorem 4.2]{HenKos14}, that is, it maps sets of zero measure to sets of zero measure. Therefore, we can use the change of variables formula \cite[Theorem A.35]{HenKos14}, so that applying the chain rule and Cramer's rule we end up with:
\begin{equation}\label{eq:inverseIntegral}\int_{\phi(\Omega)} F\big(y, \phi^{-1}(y), D(\phi^{-1})(y)\big) \dd y = \int_\Omega F\left(\phi(x), x, \frac{\cof D\phi(x)^\T}{\det D\phi(x)}\right) |\det D\phi(x)| \dd x.\end{equation}
for any Carath\'{e}odory integrand $F:\Omega \times \R^d \times \R^{d \times d} \to \R$.

\subsection{Symmetric energy functional}\label{subsec:symenergy}
We now formulate the different terms of our energy. Let
\[\eta \in C^1_0(\R;\R), \text{ with } \int_\R \eta=1 \text{ and } \supp \eta = [-1, 1]\]
and define
\[\eta_\sigma(s): = \frac{1}{\sigma}\,\eta\left(\frac{s}{\sigma}\right)\text{, so that }\int_{\R} \eta_\sigma  = 1 \text{ for all }\sigma.\]
One option would be to choose $\eta \in C^\infty_0(\R;\R)$ as for standard mollifiers, but we only need one derivative for our first-order numerical descent. Moreover, choosing $\eta$ of polynomial decay allows for more detailed estimates, which are required for existence of minimizers with weights given as powers of $\sigma$ in the constraint penalty term in \eqref{eq:EsymMatch} and vanishing volume regularization \eqref{eq:EsymVol} below.

Choosing our main parameter for scaling to be the size $\sigma$ of the narrow band, we introduce two scaling exponents. The first is denoted by $q$ and controls how intensely the matching penalty is enforced. The second, denoted by $\theta \in \{0,1\}$, controls the behaviour of the volume term.  Our complete energy, taking into account contributions of the inverse map for each term through \eqref{eq:inverseIntegral} reads
\begin{equation}\label{eq:Esym}
E^\sigma[\phi]:=E^\sigma_{\match}[\phi]+E^\sigma_{\mem}[\phi]+E^\sigma_{\bend}[\phi]+E^\sigma_{\vol}[\phi], \text{where }
\end{equation}
\begin{equation}\label{eq:EsymMatch}
E^\sigma_{\match}[\phi]:=\frac{1}{\sigma^q}\int_\Omega \Big( \eta_\sigma(\dist_1) + \eta_\sigma(\dist_2 \circ \phi)\, \big|\det D\phi\big| \Big) |\dist_2 \circ \phi - \dist_1|^2,
\end{equation}
\begin{equation}\label{eq:EsymMem}
\begin{aligned}
E^\sigma_{\mem}[\phi]:=&\int_\Omega \eta_\sigma(\dist_1) W\big( (\P_2 \circ \phi) D\phi\, \P_1  + (\n_2 \circ \phi) \otimes \n_1 \big)\\
&\enskip+\eta_\sigma(\dist_2 \circ \phi) W\!\left( \P_1 \frac{\cof D\phi^\T}{\det D\phi} (\P_2 \circ \phi)  + \n_1 \otimes (\n_2 \circ \phi) \right)\big|\det D\phi\big|,
\end{aligned}
\end{equation}
\begin{equation}\label{eq:EsymBend}
\begin{aligned}
E^\sigma_{\bend}[\phi]:=&\int_\Omega \eta_\sigma(\dist_1) W\big( \Lambda[\S_1, \S_2 \circ \phi, D\phi, \n_1, \n_2 \circ \phi] \big)\\
&\enskip+\eta_\sigma(\dist_2 \circ \phi) W\!\left( \Lambda\left[\S_2 \circ \phi, \S_1, \cof D\phi^\T / \det D\phi, \n_2 \circ \phi, \n_1\right] \right)\big|\det D\phi\big|,
\end{aligned}
\end{equation}
\begin{equation}\label{eq:EsymVol}
E^\sigma_{\vol}[\phi]:=\sigma^\theta\int_\Omega W(D\phi)+W\left(\frac{\cof D\phi^\T}{\det D\phi}\right)\big|\det D\phi\big|.
\end{equation}
Here, $W:\R^{d \times d} \to \R$ is a $p$-coercive and polyconvex (that is, it can be written as a jointly convex function of the matrix argument and determinants of its minors of any order \cite[Def.~5.1(iii)]{Dac08}) stored energy function minimized at $\SO(d)$, whose specific form is discussed in Section \ref{subsec:storedenergy}. The form of the first terms in $E^\sigma_{\mem}$ and $E^\sigma_{\bend}$ follows the constructions introduced in \cite{IglRumSch18}. We have postponed the definition of $\Lambda$ and $\S_i$ to Section \ref{subsec:tangential} below, where we also recall the motivation for these formulas. 

In case $\theta=0$ the volume term is equally strong as $\sigma \to 0$, interfering with the surface terms. In Section \ref{sec:limits} we consider the $\Gamma$-limit as $\sigma \to 0$ of a non-symmetric version (without the inverse terms) of the functional in this regime. In contrast if $\theta=1$ the volume term does not interfere in the limit, but uniform $W^{1,p}$ bounds on the corresponding minimizers are lost, complicating the ensuing analysis.

For practical applications each term can be multiplied by a positive constant $c_{\match}$,$c_{\mem}$, $c_{\bend}$, $c_{\vol}$ to balance the relative strength of each effect; we will do so for our numerical examples in Section \ref{sec:numerics}, but skip these in the rest of the presentation to not further complicate the notation.

Notice that since in the volume energy we are using the energy on both the deformation and its inverse via $W(D\phi)+W(\cof D\phi^\T / \det D\phi)|\det D\phi|$, no injectivity penalization is  needed in $W$ itself, that is $W(A)$ can remain bounded as $\det A \to 0$. Nevertheless, it only makes sense to consider this energy when $\det D\phi > 0$ almost everywhere. This property is satisfied by deformations belonging to the class that we consider in Section \ref{sec:existence}, see \eqref{eq:biSobolevDir}.

\subsection{Projected tangential derivatives and curvature classifiers}\label{subsec:tangential}
One of the main novelties of \cite{IglRumSch18} is measuring tangential distortion through the first term of \eqref{eq:EsymMem}, using the projected tangential derivative $(\P_2 \circ \phi) D\phi\, \P_1  + (\n_2 \circ \phi) \otimes \n_1$. This can be seen as a relaxation of physical models of tangential distortion energies, which is specific to shape matching of hypersurfaces given as level sets. This is because it utilizes the projection $\P_2 = \IdM - \nabla \dist_2 \otimes \nabla \dist_2$ to the tangent space to the target hypersurface, evaluated at the point $\phi(x)$ which may not necessarily lie exactly on $\M_2$, so the signed distance function is needed to obtain a surrogate of the geometry from it. In any case, if we had $\phi(\M_1)=\M_2$, the second projection would be superfluous and this construction would measure tangential distortion exactly. Here we use the same construction, with the addition of the symmetrized term which accounts for tangential distortion, in the same projected sense, but for the inverse of the deformation that should match $\M_2$ onto $\M_1$. Further details and explanations, along with comparison with constructions based on the plain tangential derivative are given in \cite[Secs.~2.1, 3.1, 4.1]{IglRumSch18}.

We remark that it is possible for a point $x \in \Omega$ to simultaneously satisfy
\[\det D\phi(x) >0 \text{ and }\det \big( \P_2(\phi(x)) D\phi(x)\, \P_1(x)  + \n_2(\phi(x)) \otimes \n_1(x) \big) <0,\] 
depending on the relative positions of $\phi(\M_1)$ and $\M_2$. As a simple example, consider $\phi$ to be the identity map in $\Omega=(-3,3)^2$ with $\M_1=\mathbb{S}^1+(1,0)$ and $\M_2=\mathbb{S}^1-(1,0)$, for $\mathbb{S}^1$ the unit circle. In this case, the projected tangential derivative at the origin turns out to be $-e_1 \otimes e_1 + e_2 \otimes e_2$, where $e_i$ are the standard cartesian unit vectors. This matrix is orientation reversing, the reason being that the tangent spaces are mapped to each other in reverse orientation. Of course, when mapping though a homeomorphism which nearly matches $\M_1$ to $\M_2$ this situation would seldom happen, and when exactly mapping $\M_1$ to $\M_2$ it cannot happen at all, but this cannot be enforced for all the iterates computed in a numerical descent. Therefore, it is paramount that the energy density used in $W$ is defined and finite on all of $\R^{d \times d}$ regardless of orientation, while being minimized at least locally at $\SO(d)$. The specific density \eqref{eq:Wbounded} we use for numerical computations satisfies these conditions along with additional continuity properties.

Turning our attention to the bending-like energy $E_\bend^\sigma$ in \eqref{eq:EsymBend}, we first define
\[\S_i(x) := \mathcal{R}\big(D^2 \dist_i + \n_i(x) \otimes \n_i(x)\big),\]
where $\mathcal{R}:\R^{d \times d} \to \R^{d \times d}$ is a regularization operator defined below, and $D^2 \dist_i + \n_i(x) \otimes \n_i(x)$ is a nonsingular matrix that reflects the shape operator to the offset hypersurface of $\M_i$ at the point $x$ (that is, $\{y \,|\, \dist_i(y)=\dist_i(x)\}$) when restricted to its tangent space, and with the normal direction $\n_i(x)$ as an eigenvector with unit eigenvalue. These are used in the classifier matrix introduced in \cite{IglRumSch18} and given for symmetric matrix fields $M,N$ and arbitrary square matrix fields $A$ by
\begin{equation}\label{eq:lambdadef}\Lambda[M,N,A,\n_1,\n_2] := \P_2 N^{\frac12} \P_2 A \P_1 M^{-\frac12} \P_1 + \n_2 \otimes \n_1.\end{equation}
It can be seen through a relatively straightforward computation (see \cite[Lem.~3.1]{IglRumSch18}) that whenever $A\in \R^{d \times d}$ satisfies $A \P_1 = \P_2 A$ and $M,N \in \R^{d \times d}$ are symmetric positive definite matrices for which
\[M=\P_1 M \P_1 + \n_1 \otimes \n_1 \text{ and } N=\P_2 N \P_2 + \n_2 \otimes \n_2,\]
then the following two conditions are equivalent:
\begin{equation}\label{eq:curvaturematching}\begin{gathered}
A^T \P_2 N \P_2 A =  \P_1 M \P_1,\text{ and }\\
\Lambda[M,N,A,\n_1,\n_2] = \P_2 N^{\frac12} \P_2 A \P_1 M^{-\frac12} \P_1 + \n_2 \otimes \n_1 \in \O(n).
\end{gathered}\end{equation}
In the above (for the case $A=D\phi$) we recognize the first equation as the transformation rule for second-order tensors defined at the tangent spaces $\nabla \dist_i^\perp$, such as the shape operators of the hypersurfaces $\M_i$. The second conditions implies $W(\Lambda[M,N,A, \n_1, \n_2])$ is pointwise minimized, since we assume it has an energy well at $\SO(d)$. Therefore, the integrands of $E_\bend^\sigma$ in \eqref{eq:EsymBend} can be seen as multiplicatively measuring the failure of $\S_2$ to be pulled back to $\S_1$. This can also  be seen as a relaxed matching condition that would resemble a true bending energy whenever $\phi(\M_1)=\M_2$, but that doesn't take into account the curvature of $\phi(\M_1)$ directly and uses the one of $\M_2$ instead. 

A limitation is that the equivalence of \eqref{eq:curvaturematching} is only valid whenever $M,N$ are positive definite. For this purpose use a regularized absolute value function for the eigenvalues of symmetric matrices. Fixing $i=1$ for concreteness and assuming the matrix $D^2 \dist_1(x) + \n_1(x) \otimes \n_1(x)$ can be diagonalized as $Q(x)^T \mathrm{diag}(\lambda_1(x),\ldots, \lambda_d(x)) Q(x)$ where $Q(x) \in \SO(d)$ for each $x\in \Omega$, we define 
\begin{equation}\label{eq:defSi}\S_1(x) := \mathcal{R}\big(D^2 \dist_1(x) + \n_1(x) \otimes \n_1(x)\big) := Q(x)^T \mathrm{diag}\big(\max(|\lambda_1(x)|,\tau),\ldots, \max(|\lambda_n(x)|,\tau)\big) Q(x)\end{equation}
where $\tau >0$ is a small positive parameter. This means that although sensitive to curvature directions and magnitudes, our matching conditions must be agnostic to the signs of the curvatures. Although this limits the capacity of $\Lambda[\S_1, \S_2 \circ \phi, D\phi, \n_1, \n_2 \circ \phi]$ to enforce correct curvature matching since it might identify saddle points with elliptical ones, this term still helps to align the hypersurfaces through its tensorial character. For further information about this method of first-order curvature matching we refer again to \cite[Secs.~2.2 and 3.2]{IglRumSch18}.

\subsection{Stored energy functions}\label{subsec:storedenergy}
The integrands $F$ for our energy are constructed from a polyconvex stored energy function $W:\R^{d \times d} \to \R$, such that $W \gs 0$, $W(A)=0$ if $A \in \SO(d)$ such that $W(A) \gs C |A|^p$ for some $p > d$. When introducing specific examples below we take $p=d+1$, for simplicity in the formulas. Let us also reiterate that $W$ is required to be defined on all of $\R^{d\times d}$, and not only for $A$ with $\det A > 0$. A particularly compact such function with appropriate coercivity, inspired by the ones used in \cite{IglRumSch18}, is given in any dimension $d\gs 2$ by
\[W_o(A):=\frac{1}{d+1}|A|^{d+1} + d^{\frac{d-1}{2}}e^{1-\det A} - \frac{1}{d+1} d^{\frac{d+1}{2}} - d^{\frac{d-1}{2}}.\]
In particular, for $d=3$,
\[W_o(A)=\frac{1}{4}|A|^{4}+3 e^{1-\det A}-21.\]
It can be checked that the above function attains a local minimum at $\SO(d)$ by rewriting it in terms of singular values, which is possible \cite[Prop.~5.31]{Dac08} because they are frame-invariant and isotropic.

A disadvantage of the above stored energy is that even though it is coercive in $W^{1,d+1}(\Omega; \R^d)$, due to the exponential term it does not satisfy bounds of the type $W_o(A)\ls C (1+|A|^{d+1})$, which will be required in the analysis of Section \ref{sec:limits}. Through the following lemma we can easily produce more suitable stored energy functions:

\begin{lem}\label{lem:Wgeneral}Let $d \gs 2$ and $\widehat W: (\R^+\cup \{0\}) \times \R \to \R$ be convex, increasing in its first argument, with $\widehat W(s, -t) > \widehat W(s, t)$ for any $s,t>0$ and such that the function $t \mapsto \widehat W(d^{d/2} t, t)$ attains its minimum at $t=1$. Then, the stored energy function $W: \R^{d\times d}\to \R^+\cup \{0\}$ defined by
\begin{equation}\label{eq:Wgeneral}W(A):=\widehat W\left(|A|^d, \det A\right)\end{equation}
attains its global minimum at $\SO(d)$. Moreover, $W$ is polyconvex and frame-indifferent.
\end{lem}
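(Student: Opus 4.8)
The plan is to reduce the three claimed properties — attainment of the global minimum on $\SO(d)$, polyconvexity, and frame-indifference — to elementary facts about singular values and the structure of $\widehat W$. First I would observe that $W(A) = \widehat W(|A|^d, \det A)$ depends on $A$ only through $|A|$ and $\det A$, both of which are unchanged under $A \mapsto RAS$ for $R,S \in \SO(d)$; this gives frame-indifference (indeed full $\SO(d)\times\SO(d)$-invariance) immediately, and in particular lets me work with singular values $0 \ls s_1 \ls \dots \ls s_d$ of $A$, writing $|A|^2 = \sum_i s_i^2$ and $\det A = \pm \prod_i s_i$.

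For the minimization claim, I would argue in two stages. Since $\widehat W(s,-t) > \widehat W(s,t)$ for $s,t>0$ and $\widehat W$ is increasing in its first argument, for fixed singular values it is never worse to have $\det A \gs 0$; so it suffices to minimize over $A$ with $\det A = \prod_i s_i \gs 0$. Next, with $\sigma := \prod_i s_i$ fixed, the AM–GM inequality gives $\sum_i s_i^2 \gs d\,\sigma^{2/d}$, i.e. $|A|^d \gs d^{d/2}\sigma$, with equality iff all $s_i$ are equal (to $\sigma^{1/d}$). Because $\widehat W$ is increasing in its first argument, $W(A) = \widehat W(|A|^d, \sigma) \gs \widehat W(d^{d/2}\sigma, \sigma)$; and by hypothesis the scalar function $t \mapsto \widehat W(d^{d/2}t, t)$ is minimized at $t=1$. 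Chaining these, $W(A) \gs \widehat W(d^{d/2}, 1)$, and equality forces $\det A = 1$ together with all singular values equal to $1$, i.e. $A \in \SO(d)$; conversely $W$ takes the value $\widehat W(d^{d/2},1)$ on all of $\SO(d)$. Hence the global minimum is attained exactly on $\SO(d)$ (or at least there — the statement only claims attainment).

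For polyconvexity, I would exhibit a jointly convex function $g$ of the pair $(A, \det A) \in \R^{d\times d}\times\R$ with $W(A) = g(A, \det A)$. The natural candidate is $g(A, t) := \widehat W\big(|A|^d, t\big)$. The map $A \mapsto |A|$ is a norm, hence convex and nonnegative, so $A \mapsto |A|^d$ is convex (composition of the convex nondecreasing function $r \mapsto r^d$ on $\R^+$ with a nonnegative convex function). Now $\widehat W$ is convex on $(\R^+\cup\{0\})\times\R$ and nondecreasing in its first slot, so precomposing the first slot with the convex function $A \mapsto |A|^d$ and keeping $t$ linear preserves joint convexity: for $(A_0,t_0),(A_1,t_1)$ and $\lambda\in[0,1]$, monotonicity in the first argument absorbs the convexity gap $|{\lambda A_0 + (1-\lambda)A_1}|^d \ls \lambda|A_0|^d + (1-\lambda)|A_1|^d$, and then convexity of $\widehat W$ finishes. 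Thus $g$ is jointly convex, and since polyconvexity in the sense of \cite[Def.~5.1(iii)]{Dac08} only requires a convex representation through the matrix and the determinants of its minors (here just $A$ itself and $\det A$ suffice), $W$ is polyconvex.

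I expect no genuine obstacle here; the only point requiring a little care is the monotonicity bookkeeping in the polyconvexity step — one must use that $\widehat W$ is nondecreasing in its first argument precisely to convert the convexity of $A\mapsto|A|^d$ into joint convexity of $g$, and this is exactly why that hypothesis is imposed. A secondary subtlety is the diagonalizability/ordering of singular values when $A$ is singular, but the singular value decomposition is available for every $A\in\R^{d\times d}$, so the AM–GM and monotonicity arguments go through verbatim including the boundary case $\det A = 0$.
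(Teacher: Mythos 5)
Your proof is correct and follows essentially the same route as the paper's: reduction to nonnegative determinant via the hypothesis $\widehat W(s,-t)>\widehat W(s,t)$, the AM--GM inequality on singular values to get $|A|^d \gs d^{d/2}\det A$, the chained monotonicity/minimality argument, convexity of $A\mapsto|A|^d$ combined with monotonicity and joint convexity of $\widehat W$ for polyconvexity, and invariance of singular values under rotations for frame-indifference. The only cosmetic difference is that the paper carries out the sign-of-determinant reduction by explicitly multiplying by $\diag(-1,1,\ldots,1)$, while you argue directly from the realizability of any sign with fixed singular values; both are fine.
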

\begin{proof}
Let $A \in \R^{d \times d}$ be arbitrary. Since $\widehat W(s, -t) > \widehat W(s, t)$ while 
\[\det\big(\diag(-1,1,\ldots, 1)\,A\big)=-\det A \text{, and }\big|\diag(-1,\ldots, 1)A\big|_d=|A|_d,\]
we may assume $\det A > 0$ when looking for a minimum point, so that $\det A = \prod_i s_i$, where $(s_1, \ldots, s_n)$ are the singular values of $A$. Using the arithmetic mean-geometric mean inequality on these singular values we obtain
\begin{equation}\label{eq:normdet}|A|^d=\tr\big(A^\T A\big)^{\frac{d}{2}}=\left(\sum_{i=1}^{d} s_i^2\right)^{\frac{d}{2}} \gs \left( d \left(\prod_{i=1}^d s_i^2\right)^{\frac{1}{d}} \right)^{\frac{d}{2}}=d^{\frac{d}{2}} \prod_{i=1}^d s_i = d^{\frac{d}{2}} \det A.\end{equation}
Combining \eqref{eq:normdet}, the monotonicity on the first argument, and the minimality property, we get
\begin{equation}\label{eq:globalminimum}W(A)=\widehat W\big(|A|^d, \det A\big)\gs \widehat W\big(d^{\frac{d}{2}} \det A, \det A\big) \gs \widehat W\big(d^{\frac{d}{2}},1\big) = W(\IdM),\end{equation}
where $\IdM \in \R^{d \times d}$ is the identity matrix. Polyconvexity follows since $\widehat W$ is convex and increasing in its first argument, so the composition with $|\cdot|^d$ is still convex. Frame invariance is immediate since the singular values of $A$ and $QA$ with $Q \in \SO(d)$ are equal.
\end{proof}

\begin{rem}Since $d \gs 2$ we have that in the definition \eqref{eq:Wgeneral}, $W$ is differentiable whenever $\widehat W$ is, which is clearly advantageous when choosing a numerical implementation. 
\end{rem}

A particular example which satisfies the hypothesis of Lemma \ref{lem:Wgeneral}, coercive in $W^{1, p}$ with $p = d+1$, nonnegative, vanishing at $\IdM$, satisfying a bound of the type $W(A)\ls C(1+|A|^p)$ and with continuous derivatives is
\begin{equation}\label{eq:Wbounded}\begin{gathered}
W(A)=\frac{1}{d+1}|A|^{d+1} + \sqrt{2}\,d^{\frac{d-1}{2}}\sqrt{1 + (\det A - 2)^2} - \frac{1}{d+1} d^{\frac{d+1}{2}} - 2\,d^{\frac{d-1}{2}},\text{ with }\\
\widehat W(s,t)=\frac{1}{d+1}\,s^{\frac{d+1}{d}} + \sqrt{2}\,d^{\frac{d-1}{2}}\,\sqrt{1 + (t - 2)^2} - \frac{1}{d+1} d^{\frac{d+1}{2}} - 2\,d^{\frac{d-1}{2}}.
\end{gathered}\end{equation}
In the analysis that follows we will use all of these properties, but not the specific form of $W$. For the numerical computations presented in Section \ref{sec:numerics}, the specific formula \eqref{eq:Wbounded} is used.

In light of \eqref{eq:inverseIntegral} one might wonder about the behaviour of the energy associated to the inverse deformation, expressed through \eqref{eq:inverseIntegral}. In fact, we have that if $W$ is polyconvex, $W \gs 0$ and $W(A)=0$ whenever $A \in \SO(d)$, then the function defined for $A$ with $\det A >0$ by
\[\mathcal {W}(A) := W(A^{-1})\big|\det A\big| = W(\cof A^\T / \det A)\big|\det A\big|\]
is also polyconvex, $\mathcal{W}\gs 0$ and $\mathcal{W}(A)=0$ if $A \in \SO(d)$. Polyconvexity is proved in \cite[Thm.~2.6]{Bal77b} and \cite[Prop.~1.1, Sec.~2.5]{IwaOni09}. The minimality property follows from the assumption $\det A > 0$ and the fact that $\SO(d)$ is a group, so $A \in \SO(d)$ if and only if $A^{-1} \in \SO(d)$.

\subsection{Properties of the energy}
In \cite[Lem.~4.1]{IglRumSch18} it is proved that the determinant of the projected tangential derivative $\det\big((\P_2 \circ \phi) D\phi\, \P_1  + (\n_2 \circ \phi) \otimes \n_1\big)$ is weakly continuous with respect to weak convergence in $W^{1,p}(\Omega; \R^d)$. The following algebraic lemma provides an easier route to lower semicontinuity:
\begin{lem}\label{lem:polyconvexity}The infinitesimal projected area distortion induced by the derivative of the inverse deformation can be computed as the quotient of the stretching along normals and the determinant of the Jacobian. In symbols, for $A \in R^{d \times d}$ arbitrary and $\P_i = \IdM - \n_i \otimes \n_i$ we have
\begin{equation}\label{eq:inversearea}\det\left(\P_1 A^{-1} \P_2 + \n_1 \otimes \n_2 \right)=\det\left(\P_1 \frac{\cof A^\T}{\det A} \,\P_2  + \n_1 \otimes \n_2 \right)=\frac{\n_2^\T A\, \n_1}{\det A}.
\end{equation}
Similarly, for the determinant of the projected tangential derivative we have
\begin{equation}\label{eq:tancof}\det\left(\P_2 A\, \P_1  + \n_2 \otimes \n_1 \right)=\n_2^\T\, \cof A\, \n_1.
\end{equation}
In consequence, both the integrands $F_{\mem}, \mathcal{F}_{\mem}:\Omega \times \R^d \times \{A \in \R^{d \times d} \,|\, \det A > 0\} \to \R$ defined by
\begin{gather}
\label{eq:Fmemdirect}F_{\mem}(x,v,A) := W\Big(\P_2(v) A \, \P_1(x)  + \n_2(v) \otimes \n_1(x)\Big) \text{ and }\\
\label{eq:Fmeminverse}\mathcal{F}_{\mem}(x,v,A):=W\Big(\P_1(x) \frac{\cof A^\T}{\det A} \P_2(v) + \n_1(x) \otimes \n_2(v)\Big)\big|\det A\big|
\end{gather}
are polyconvex in their last argument.

Furthermore, noticing that the $\S_i$ are positive definite by the regularization $\mathcal{R}$ applied to the shape operators in \eqref{eq:defSi}, one can define the regularized Gaussian curvatures $K_i \in \R^+$ by 
\begin{equation}\label{eq:regularizedGaussianCurvs}
K_i:=\n_i^\T \cof \S_i \,\n_i=\det \left(\frac{\n_i^\T \S_i \n_i}{\det \S_i}\right)^{-1},
\end{equation}
for which we have
\begin{equation}\label{eq:shapecof}
\begin{gathered}\det\left(\P_2 \SSrt \P_2 A\, \P_1 \SrtI \P_1  + \n_2 \otimes \n_1 \right)=K_1^{-\frac{1}{2}}K_2^{\frac{1}{2}}\, \n_2^\T \cof A\, \n_1
\end{gathered}
\end{equation}
and analogously
\begin{equation}\label{eq:invshapecof}
\begin{gathered}
\det\left(\P_1 \Srt \P_1 \frac{\cof A^\T}{\det A}\, \P_2 \SSrtI \P_2  + \n_1 \otimes \n_2 \right)=K_1^{\frac{1}{2}}K_2^{-\frac{1}{2}}\,\frac{\n_2^\T A \n_1}{\det A}.
\end{gathered}
\end{equation}
Thereby the energy densities for $E^\sigma_{\bend}$, defined by (c.f. \eqref{eq:EsymBend} and \eqref{eq:lambdadef})
\begin{gather}
\label{eq:Fbenddirect}F_{\bend}(x,v,A) := W\Big(\P_2(v) \SSrt(v) \P_2(v) A\, \P_1(x) \SrtI(x) \P_1(x)  + \n_2(v) \otimes \n_1(x) \Big) \text{ and }\\
\label{eq:Fbendinverse}\mkern-18mu\mathcal{F}_{\bend}(x,v,A):=W\Big(\P_1(x) \Srt(x) \P_1(x) \frac{\cof A^\T}{\det A}\, \P_2(v) \SSrtI(v) \P_2(v) + \n_1(x) \otimes \n_2(v)\Big)\big|\det A\big|,
\end{gather}
are also polyconvex in $A$ whenever $\det A >0$.
\end{lem}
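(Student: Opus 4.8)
The plan is to establish the determinant identities \eqref{eq:tancof} and \eqref{eq:inversearea} by reduction to the standard basis, to upgrade them to a statement valid for \emph{all} minors, and then to read off polyconvexity from that of $W$. For \eqref{eq:tancof} I would write $\P_i = \Q(\n_i)(\IdM - e_d \otimes e_d)\Q(\n_i)^\T$ and $\n_i \otimes \n_i = \Q(\n_i)(e_d \otimes e_d)\Q(\n_i)^\T$, so that $\P_2 A\, \P_1 + \n_2 \otimes \n_1 = \Q(\n_2)\, M\, \Q(\n_1)^\T$ with $M := (\IdM - e_d \otimes e_d)\, C\, (\IdM - e_d \otimes e_d) + e_d \otimes e_d$ and $C := \Q(\n_2)^\T A\, \Q(\n_1)$. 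Since $M$ is block-diagonal with blocks the upper-left $(d-1)\times(d-1)$ submatrix $C'$ of $C$ and the scalar $1$, while $\det \Q(\n_i) = 1$, taking determinants and using multiplicativity of $\cof$ together with $\cof R = R$ for $R \in \SO(d)$ gives $\det(\P_2 A\, \P_1 + \n_2 \otimes \n_1) = \det C' = (\cof C)_{dd} = \n_2^\T \cof(A)\, \n_1$. In \eqref{eq:inversearea} the first equality is merely $\cof A^\T/\det A = A^{-1}$, and the second follows by applying \eqref{eq:tancof} with the roles of $\M_1,\M_2$ exchanged to the matrix $A^{-1}$, using $\cof(A^{-1}) = A^\T/\det A$.

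For the polyconvexity of the direct densities, the key point is that every minor of $\P_2 B\, \P_1 + \n_2 \otimes \n_1$ is an affine function of the minors of $B$: by Cauchy--Binet the $k \times k$ minors of $\P_2 B\, \P_1$ are fixed linear combinations of the $k \times k$ minors of $B$, and since $\n_2 \otimes \n_1$ has rank one, expanding multilinearly shows that each $k \times k$ minor of the sum is a $k \times k$ minor of $\P_2 B\, \P_1$ plus a term linear in its $(k-1)\times(k-1)$ minors; the case $k = d$ is exactly \eqref{eq:tancof}. Writing $W = g \circ \mu$ with $g$ convex and $\mu(\cdot)$ collecting $A$ and all its higher minors, the argument of $W$ in $F_{\mem}$ (see \eqref{eq:Fmemdirect}) is affine in $A$, so $F_{\mem}(x,v,\cdot)$ is $g$ composed with an affine map of $\mu(A)$, hence convex in $\mu(A)$, i.e.\ polyconvex. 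The argument of $W$ in $F_{\bend}$ (see \eqref{eq:Fbenddirect}) equals $\P_2(v)\bigl(\SSrt(v)\P_2(v)\,A\,\P_1(x)\SrtI(x)\bigr)\P_1(x) + \n_2(v) \otimes \n_1(x)$, which is of the same form once one absorbs the fixed invertible factors $\SSrt(v),\SrtI(x)$ (left/right multiplication again acts linearly on minors), so $F_{\bend}$ is polyconvex as well. (For the concrete $W$ of Lemma~\ref{lem:Wgeneral} a more hands-on argument is available, using convexity of $|\cdot|^d$ on the affine argument and a perspective-function argument for the determinant term.)

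The curvature identities come along the way. Since $D^2\dist_i\, \n_i = 0$ and $\tau \le 1$ in \eqref{eq:defSi}, the matrix $\S_i$ has $\n_i$ as a unit eigenvector with eigenvalue $1$, so $\cof(\S_i)\n_i = \det(\S_i)\,\S_i^{-1}\n_i = \det(\S_i)\,\n_i$, giving $K_i = \n_i^\T \cof\S_i\, \n_i = \det\S_i$, and since moreover $\n_i^\T\S_i\n_i = 1$ this agrees with the second expression in \eqref{eq:regularizedGaussianCurvs}. Then \eqref{eq:shapecof} follows from \eqref{eq:tancof} applied to $B := \SSrt(v)\P_2(v)\,A\,\P_1(x)\SrtI(x)$: using $\cof(\P_i) = \n_i \otimes \n_i$, $\cof(\S_i^{1/2}) = (\det\S_i)^{1/2}\S_i^{-1/2}$, $\cof(\S_i^{-1/2}) = (\det\S_i)^{-1/2}\S_i^{1/2}$ and $\S_i^{-1/2}\n_i = \n_i$, the quantity $\n_2^\T \cof(B)\, \n_1$ collapses to $(\det\S_2)^{1/2}(\det\S_1)^{-1/2}\,\n_2^\T \cof(A)\, \n_1 = K_1^{-1/2}K_2^{1/2}\,\n_2^\T\cof(A)\,\n_1$; identity \eqref{eq:invshapecof} is obtained the same way with $\M_1,\M_2$ exchanged and $A$ replaced by $\cof A^\T/\det A$, so that $\cof$ of the argument equals $A^\T/\det A$.

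Finally, for the inverse densities one has, on $\{\det A > 0\}$, $\mathcal{F}_{\mem}(x,v,A) = f(A^{-1})\,|\det A|$ with $f(M) := W(\P_1(x)\,M\,\P_2(v) + \n_1(x) \otimes \n_2(v))$ and $\mathcal{F}_{\bend}(x,v,A) = f'(A^{-1})\,|\det A|$ with $f'(M) := W(\P_1(x)\Srt(x)\P_1(x)\,M\,\P_2(v)\SSrtI(v)\P_2(v) + \n_1(x)\otimes\n_2(v))$. By the second step (with $\M_1,\M_2$ exchanged) $f$ and $f'$ are polyconvex and nonnegative, hence $\mathcal{F}_{\mem}$ and $\mathcal{F}_{\bend}$ are polyconvex in $A$ by the same perspective-function construction that yields polyconvexity of $\mathcal{W}(A) = W(A^{-1})|\det A|$ \cite[Thm.~2.6]{Bal77b}, \cite{IwaOni09}, which needs only polyconvexity of the function being transformed. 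The step I expect to carry the real weight is the claim that all minors of $\P_2 B\, \P_1 + \n_2 \otimes \n_1$ are affine in the minors of $B$: a general affine substitution does \emph{not} preserve polyconvexity, so this genuinely exploits the rank profile of the projectors $\P_i$ and requires careful multilinear bookkeeping; by contrast the cofactor manipulations in the determinant and curvature identities are routine, though one must watch signs and transposes.
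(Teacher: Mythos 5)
Your argument is correct and follows the same fundamental route as the paper's, with a few differences worth noting. You prove \eqref{eq:tancof} first by rotating into the standard basis so that $\P_2 A\P_1 + \n_2\otimes\n_1 = \Q(\n_2)\,M\,\Q(\n_1)^\T$ with $M$ block diagonal, and then deduce \eqref{eq:inversearea}; the paper proceeds in the opposite order, starting from Cramer's rule on $A^{-1}$ and arriving at the equivalent expression $[\cof(Q(\n_1)^\T B Q(\n_2))]_{dd}$ — both are the same underlying reduction. Your polyconvexity step is more explicit (and more general) than the paper's terse ``from \eqref{eq:tancof}, polyconvexity is clear'': the claim that every $k\times k$ minor of $\P_2 B\,\P_1 + \n_2\otimes\n_1$ is affine in the minors of $B$ — Cauchy--Binet for the congruence plus the matrix determinant lemma on each $k\times k$ submatrix for the rank-one shift — establishes polyconvexity for \emph{any} polyconvex $W$, whereas the paper's one-liner is most directly convincing for the class \eqref{eq:Wgeneral} depending only on $|A|$ and $\det A$ (where only \eqref{eq:tancof} and linearity of the substitution are needed). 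Your caveat that a general affine substitution need not preserve polyconvexity is correct and is exactly what makes this step nontrivial, but the form here (congruence plus rank-one shift) is a standard polyconvexity-preserving case and your ``careful multilinear bookkeeping'' is the right amount of care, not an extra difficulty. For the curvature identities, you apply \eqref{eq:tancof} once to the composite $B=\SSrt\P_2 A\P_1\SrtI$ and use $\cof(XY)=\cof X\cof Y$, $\cof\P_i = \n_i\otimes\n_i$, and $\S_i^{\pm 1/2}\n_i=\n_i$, while the paper factors $\P_2\SSrt\P_2 A\P_1\SrtI\P_1+\n_2\otimes\n_1$ as a product of three matrices of the form $\P_\alpha X\P_\beta+\n_\alpha\otimes\n_\beta$ and applies \eqref{eq:tancof} to each; both work, and your explicit derivation that $\S_i\n_i=\n_i$ (using $D^2\dist_i\,\n_i=0$ and $\tau\ls 1$) and hence $K_i=\det\S_i$ fills in a detail the paper takes for granted. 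For the inverse densities $\mathcal{F}_{\mem},\mathcal{F}_{\bend}$ both you and the paper cite Ball and Iwaniec--Onninen after reducing to polyconvexity of the function being transformed, so there is no discrepancy there.
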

\begin{proof}
To prove \eqref{eq:inversearea}, we first use Cramer's rule for $A$, yielding
\begin{equation*}
A=(A^{-1})^{-1}=\frac{\cof \left( A^{-1} \right)^\T}{\det A^{-1}}=\cof \left(A^{-1}\right)^\T\det A.
\end{equation*}
Taking transposes, multiplying by $Q(\n_2)$ (as defined in Section \ref{subsec:notation}) on the right and by $Q(\n_1)^\T$ on the left, and dividing by $\det A$,
\begin{equation}\label{eq:cofcof1}
\begin{aligned}
\frac{Q(\n_1)^\T A^\T Q(\n_2)}{\det A}&=Q(\n_1)^\T\cof \left(A^{-1}\right)Q(\n_2)\\
&=\cof \big(Q(\n_1)^\T\big) \cof \left( A^{-1}\right)\cof\big(Q(\n_2)\big)\\
&=\cof \left(Q(\n_1)^\T A^{-1} Q(\n_2)\right),
\end{aligned}
\end{equation}
where we have used that $Q(\n_i) \in \SO(d)$. Now, as also noticed in \cite[Eq.~2.3]{IglRumSch18}, for any square matrix $B$ we have
\[\det(\P_1 B \P_2 + \n_1 \otimes \n_2)=\det\Big(Q(\n_1)^\T\big( \P_1 B \P_2 + \n_1 \otimes \n_2\big)Q(\n_2)\Big)=\Big[\cof \Big(Q(\n_1)^\T B Q(\n_2)\Big)\Big]_{dd}\]
where $[\cdot]_{dd}$ denotes the last diagonal element. With $B=A^{-1}=\cof A^\T / \det A$, taking into account \eqref{eq:cofcof1} and since $Q(v)e_d =v$ we get
\begin{equation*}
\begin{aligned}
\det\left(\P_1 \frac{\cof A^\T}{\det A} \P_2 + \n_1 \otimes \n_2\right)&=\frac{\left[Q(\n_1)^\T A^\T Q(\n_2)\right]_{dd}}{\det A}=\frac{e_d^\T Q(\n_1)^\T A^\T Q(\n_2)e_d}{\det A}\\
&=\frac{\n_1^\T A^\T \n_2}{\det A}=\frac{\n_2^\T A\, \n_1}{\det A},
\end{aligned}
\end{equation*}
which is \eqref{eq:inversearea}. 

Next, interchanging the roles of $A$ and $A^{-1}$ and of $\n_1$ and $\n_2$, and again using Cramer's rule we obtain 
\[\det(\P_2 A \P_1 + \n_2 \otimes \n_1)=\frac{\n_2^\T A^{-T}\n_1}{\det A^{-1}}=\frac{\n_2^\T \cof A\,\n_1}{\det A^{-1}\det A}=\n_2^\T\, \cof A\,\n_1\]
which proves \eqref{eq:tancof}.

From \eqref{eq:tancof}, polyconvexity of $F_{\mem}$ is clear. Since $\mathcal{F}_{\mem}$ is the transformation of $F_{\mem}$ corresponding to the inverse deformation, the results of \cite{Bal77b, IwaOni09} again imply its polyconvexity.


Finally, for proving \eqref{eq:shapecof} one can write
\[\P_2 \SSrt \P_2 A\, \P_1 \SrtI \P_1  + \n_2 \otimes \n_1 = \big(\P_2 \SSrt \P_2 + \n_2 \otimes \n_2\big)\big(\P_2 A\, \P_1 + \n_2 \otimes \n_1 \big)\big(\P_1 \SrtI \P_1 + \n_1 \otimes \n_1\big),\]
take determinants on both sides, and use \eqref{eq:tancof} for each factor. Similarly, \eqref{eq:invshapecof} follows from \eqref{eq:inversearea}. The corresponding polyconvexity statements are then clear.
\end{proof}

\section{Existence of minimizers for symmetric matching energies}\label{sec:existence}
Consider the set of orientation-preserving bi-Sobolev homeomorphisms mapping $\Omega$ to itself:
\begin{equation}\label{eq:biSobolev}\mathcal{B}:=\left\{\phi \in W^{1,p}(\Omega; \R^d) \mid \phi(\Omega) = \Omega \text{ homeomorphically},\; \phi^{-1}\in W^{1,p}(\Omega; \R^d),\;\det D\phi > 0 \text{ a.e.}\right\},\end{equation}
and its subset with fixed identity Dirichlet (pure displacement) boundary conditions
\begin{equation}\label{eq:biSobolevDir}\mathcal{B}_0:= \mathcal{B} \cap \left( W_{0}^{1,p}(\Omega; \R^d) + \Id\right).\end{equation}
The discussion in the previous section suggests the latter as a natural space for posing our minimization problem. 

As in \cite{IglRumSch18}, we prove distance estimates ensuring that the image of a neighborhood of $\M_1$ can be forced to be uniformly close to $\M_2$ through the matching term, and vice versa. These guarantee that the deformed narrow band around $\M_1$ where the tangential terms are active remains in the part of the domain where $\dist_2$ is $C^2$, so that all the terms of the energy are well defined. However, compared to the situation in \cite{IglRumSch18} we need to keep a closer eye on the dependence on the parameters in the estimates. Whereas in that case $\sigma$ was fixed and one could choose a multiplicative parameter for $E_{\match}$ freely, here we couple these parameters with the prospect of considering the limit $\sigma \to 0$. A further difference is the case $\theta = 1$ which makes the volume term providing coercivity in $W^{1,p}(\Omega; \R^d)$ vanish as $\sigma \to 0$, which in turn affects how strongly the matching penalization must be enforced, as can be seen in condition \eqref{eq:coupledExponents}.

\begin{lem}\label{lem:distest}
Define \begin{equation}\label{eq:injectivradius}r_I:= \min\left( \frac{1}{\sup_{x \in \M_1}|D^2\dist_1(x)|}, \frac{1}{\sup_{x \in \M_2}|D^2\dist_2(x)|} \right),\end{equation}
and notice that $r_I >0$ since the $\M_i$ are $C^2$. Then there is $C_0 = C_0(\M_1, \M_2, \Omega)>0$ such that for all $\sigma \in (0, r_I)$ we have that
\begin{equation}\label{eq:energyboundwosigma}\inf_{\phi \in \mathcal{B}_0} E^\sigma[\phi] \ls C_0.\end{equation}
Moreover, assume that either
\begin{equation}\label{eq:decoupledExponents}\theta=0 \text{ and }q>0, \text{ or }\end{equation}
\begin{equation}\label{eq:coupledExponents}\theta =1,\ \eta \text{ is a spline of order }n\text{, and }q > n\,\max\left(\frac{1}{p-d} - 1, 0\right)+\frac{d}{p-d}-1.\end{equation}
Then for each $\eps >0$ there is some $\sigma_\eps = \sigma_\eps(\M_1, \M_2, \Omega, \theta, q)>0$ such that for all $0 < \sigma < \sigma_\eps$ and all $\phi$ with $E^\sigma[\phi] \ls C_0$ we have
\begin{equation}\label{eq:bandinclusion}\phi\left( \N_\sigma \M_1 \right) \subset \N_\eps \M_2 \text{, and } \phi^{-1}\left( \N_\sigma \M_2 \right) \subset \N_\eps \M_1,\end{equation}
where for $\delta > 0$ and $i=1,2$ we denote by $\N_\delta \M_i$ the tubular neighborhood $\{x \in \Omega \,|\, -\delta < \dist_i(x) < \delta\}$.
\end{lem}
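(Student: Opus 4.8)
The plan is to split the claim into two parts. The first bound \eqref{eq:energyboundwosigma} is a construction: I would exhibit a single competitor $\phi \in \mathcal{B}_0$ whose energy stays bounded as $\sigma \to 0$. The natural choice is a fixed $C^2$-diffeomorphism $\phi_0$ of $\Omega$ (existing because the $\M_i$ are $C^2$ and diffeomorphic, extended to all of $\Omega$ with identity boundary values) realizing $\phi_0(\M_1) = \M_2$ exactly. For such a $\phi_0$ the matching integrand $|\dist_2 \circ \phi_0 - \dist_1|^2$ vanishes to second order on $\M_1$ (both functions vanish on $\M_1$ and their gradients have unit length there), and on the whole band $\N_\sigma \M_1$ it is $O(\sigma^2)$; since $\eta_\sigma(\dist_1)$ contributes a factor $\sigma^{-1}$ against a band of width $\sigma$, and there is the prefactor $\sigma^{-q}$, one needs $|\dist_2\circ\phi_0 - \dist_1|^2 = O(\sigma^{2+q})$ on the band. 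This is where the hypotheses \eqref{eq:decoupledExponents}/\eqref{eq:coupledExponents} and the polynomial-decay / spline choice of $\eta$ enter: the precise Taylor expansion of $\dist_2 \circ \phi_0$ along normals to $\M_1$, controlled against the moments $\int s^k \eta(s)\,\dd s$, must beat $\sigma^{-q-1}$ times the $\sigma$-width. For the membrane and bending terms, on $\M_1$ the projected tangential derivative of $\phi_0$ equals the true tangential differential of a diffeomorphism between the surfaces, which is bounded (with bounded inverse), so $W(\cdot)$ and its inverse-deformation counterpart are bounded there; by $C^2$-regularity this persists on $\N_\sigma\M_1$, and integration against $\eta_\sigma(\dist_1)$ over a width-$\sigma$ band gives an $O(1)$ contribution. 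The volume term carries the factor $\sigma^\theta \le 1$ and $W(D\phi_0) + \mathcal{W}(D\phi_0)$ is bounded, so it is $O(1)$. Summing yields $E^\sigma[\phi_0] \le C_0$ uniformly for $\sigma \in (0, r_I)$ (shrinking $r_I$ if needed so the band stays where $\dist_2$ is smooth).

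For the second part \eqref{eq:bandinclusion}, I would argue by contradiction following the scheme of \cite[Lem.~4.1 or similar]{IglRumSch18} but tracking $\sigma$-dependence. Suppose, for a sequence $\sigma_k \to 0$, there are $\phi_k$ with $E^{\sigma_k}[\phi_k] \le C_0$ but a point $x_k \in \N_{\sigma_k}\M_1$ with $\phi_k(x_k) \notin \N_\eps\M_2$, i.e.\ $|\dist_2(\phi_k(x_k))| \ge \eps$. The core estimate is a Sobolev/Morrey oscillation bound: since $p > d$, membrane- or volume-type coercivity controls $\|D\phi_k\|_{L^p}$ and hence a uniform Hölder seminorm $[\phi_k]_{C^{0,1-d/p}}$, provided the coercive term does not degenerate — which is exactly why the case $\theta = 1$ forces the stronger lower bound on $q$ in \eqref{eq:coupledExponents}: when $\sigma^\theta \to 0$ the volume term alone no longer gives a uniform $W^{1,p}$ bound, so one must extract coercivity from $E_{\mem}^\sigma$ restricted to the band, whose strength against $\sigma^{-q}$ matching forcing dictates the threshold $q > n\max(\tfrac{1}{p-d}-1,0) + \tfrac{d}{p-d} - 1$. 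Granting a uniform modulus of continuity $\omega$ for the $\phi_k$ on a fixed neighborhood of $\M_1$, one shows: if $\phi_k(x_k)$ is far from $\M_2$ then, by continuity, $\phi_k$ maps a ball $B(x_k, \rho)$ with $\rho$ independent of $k$ (depending on $\eps$ via $\omega$) into the region $\{|\dist_2| \ge \eps/2\}$; but then on that ball the matching integrand $|\dist_2\circ\phi_k - \dist_1|^2 \ge (\eps/2 - \sigma_k)^2 \ge c\,\eps^2$, and $\eta_{\sigma_k}(\dist_1)$ is bounded below by $c/\sigma_k$ on the sub-band $B(x_k,\rho)\cap\N_{\sigma_k/2}\M_1$, which has measure $\gtrsim \rho^{d-1}\sigma_k$. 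Hence $E_{\match}^{\sigma_k}[\phi_k] \gtrsim \sigma_k^{-q}\cdot \sigma_k^{-1}\cdot \rho^{d-1}\sigma_k\cdot\eps^2 = c(\eps)\,\sigma_k^{-q} \to \infty$ (and even for $q$ allowed to be any positive number this blows up), contradicting $E^{\sigma_k}[\phi_k]\le C_0$. The symmetric statement for $\phi^{-1}$ follows identically using the second half of $E_{\match}^\sigma$ together with the change of variables \eqref{eq:inverseIntegral} and the a.e.\ positivity of $\det D\phi$, which makes $|\dist_1\circ\phi^{-1} - \dist_2|$ amenable to the same argument in the target domain.

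The main obstacle I anticipate is making the uniform modulus of continuity genuinely independent of $\sigma$ in the regime $\theta = 1$. One cannot simply invoke a global $W^{1,p}$ bound; instead one has to localize the coercivity of the membrane energy to the $\sigma$-band, convert that band estimate into an honest estimate near $\M_1$ in $\Omega$ (this is where the spline structure of $\eta$ of order $n$ and its vanishing moments are exploited, to interpolate the Taylor polynomial of $\dist_2\circ\phi$ and bound the deficit), and only then feed it into Morrey's inequality on a fixed-size neighborhood. Getting the bookkeeping of exponents to land exactly on \eqref{eq:coupledExponents} — balancing the $\sigma^{-q}$ forcing, the $\sigma^{-1}$ from $\eta_\sigma$, the width $\sigma$ of the band, the $\sigma$ from $\sigma^\theta$, and the $p$–$d$ gap in Morrey's embedding — will be the delicate computational heart of the argument. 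Once the modulus of continuity is secured, the contradiction step above is robust and largely geometric.
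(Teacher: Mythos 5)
Your Part 1 has a gap. You propose to take a ``fixed $C^2$-diffeomorphism $\phi_0$'' with $\phi_0(\M_1)=\M_2$, concede that for such a generic map $|\dist_2\circ\phi_0 - \dist_1|^2 = O(\sigma^2)$ on the band, and then ask for the much stronger $O(\sigma^{2+q})$, hoping to manufacture it from moments of the spline $\eta$. That cannot work: for an arbitrary diffeomorphism the first-order Taylor coefficient of $\dist_2\circ\phi_0 - \dist_1$ in the normal direction is $\nabla\dist_2(\phi_0(y))\cdot D\phi_0(y)\n_1(y) - 1$, which does not vanish in general, so the discrepancy is honestly of order $\sigma$, giving a contribution $\sim\sigma^{2-q}$ which blows up once $q>2$. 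Moments of $\eta$ cannot save this because the matching integrand is a nonnegative square. The paper sidesteps the issue entirely: it builds the competitor on the band as $\varphi_{r_I}(y+t\n_1(y)) := \varphi(y) + t\,\n_2(\varphi(y))$, which maps $\n_1$-normal fibers to $\n_2$-normal fibers isometrically, so that $\dist_2\circ\varphi_{r_I} = \dist_1$ on $\N_{r_I}\M_1$ and the matching energy is \emph{exactly zero} for every $\sigma<r_I$. Outside the band it glues in an energy-minimizing extension with identity boundary values, and invertibility in $\mathcal{B}_0$ follows from Ball's global invertibility theorem. Your competitor must be chosen with this normal-fiber structure; a generic diffeomorphism does not suffice.

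Your Part 2 is a valid and in fact pleasantly geometric alternative in the case $\theta=0$: the uniform $W^{1,p}$ (hence $C^{0,1-d/p}$) bound from $E_{\vol}^\sigma$ makes the ``push a fixed ball into the far region, integrate the matching density over a measure-$\gtrsim\rho^{d-1}\sigma$ slab'' contradiction argument go through, and yields $q>0$, matching \eqref{eq:decoupledExponents}. (One small point: you implicitly need $\eta>0$ on, say, $[-1/2,1/2]$, which is fine for a spline but should be stated.) The problem is the case $\theta=1$, which is exactly what produces \eqref{eq:coupledExponents}. There the Hölder seminorm of $\phi_\sigma$ grows like $\sigma^{-1/p}$, so the radius $\rho$ in your contradiction shrinks, and your own bookkeeping then gives a threshold $q>(d-1)/(p-d)$ rather than the paper's $q>n\max(\tfrac{1}{p-d}-1,0)+\tfrac{d}{p-d}-1$; in particular it is strictly worse whenever $p>d+1$. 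Your proposed rescue — extracting a uniform modulus of continuity from $E_{\mem}^\sigma$ — is not viable: $E_{\mem}^\sigma$ is built from the projected \emph{tangential} derivative and controls no normal derivative at all, and it is weighted by $\eta_\sigma(\dist_1)$ which lives on a band of vanishing width, so it cannot yield a modulus of continuity on any fixed neighborhood. The paper does not try to make the Hölder seminorm uniform. Instead it splits the band at a cutoff $\hat\sigma=\sigma^r$, controls the outer annulus $\{\sigma-\hat\sigma\ls|\dist_1|\ls\sigma\}$ with the growing Hölder seminorm against the shrinking factor $\hat\sigma^{\alpha}$ (choosing $r>1/(p-d)$ so the product still vanishes), and controls the inner region by a Gagliardo–Nirenberg interpolation between $\|D\phi\|_{L^p}\lesssim\sigma^{-1/p}$ and the $L^2$ quantity $\eta_\sigma(\dist_1)|\dist_2\circ\phi-\dist_1|^2$ coming directly from $E_\match^\sigma$. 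The spline order $n$ enters precisely because $\eta_\sigma(\sigma-\sigma^r)\gtrsim\sigma^{n(r-1)-1}$ is needed to upgrade the $L^2$-weighted matching bound to an unweighted $L^p$ one on $\{|\dist_1|<\sigma-\hat\sigma\}$, which is a quantitatively different use of the spline hypothesis than the moment cancellation you were hoping for. Without this band-splitting and interpolation you will not land on \eqref{eq:coupledExponents}.
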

\begin{proof}Since the hypersurfaces $\M_1$ and $\M_2$ are assumed to be diffeomorphic, let $\varphi:\M_1 \to \M_2$ be such a diffeomorphism. Now, the Frobenius norm $|D^2\dist_i(x)|$ is an upper bound for the principal curvatures of $\M_i$ at $x$, so that (see for example \cite[Lem.~6.3]{Mil63}) we may write each point $x \in \N_{r_I} \M_i$ as $x = y + t \n_i(y)$ with $y \in \M_i$ being the Euclidean projection of $x$ onto $\M_i$ and $|t| < r_I$. Using this notation we can extend $\varphi$ to a map $\varphi_{r_I}: \N_{r_I} \M_1 \to \N_{r_I} \M_2$ defined by $\varphi_{r_I}(y,t)=\varphi(y)+t \n_2(y)$ which is still a diffeomorphism. We then use the values of $\varphi_{r_I}$ at $\M_1 \pm r_I \n_1$ as Dirichlet boundary conditions for minimizers of a rescaled volume energy on the inside $\Omega_{\mathrm{in}}$ and outside $\Omega_{\mathrm{out}} = \Omega \setminus (\Omega_{\mathrm{in}} \cup \N_{r_I} \M_1)$ parts of the domain with respect to $\N_{r_I} \M_1$, that is
\begin{equation}\label{eq:insideprob}\inf_{\substack{\phi \in W^{1,p}(\Omega_{\mathrm{out}})\\\phi=\varphi+r_I \n_1 \text{ on }\M_1+r_I \n_1 \\ \phi = \Id \text{ on }\partial \Omega}} \int_{\Omega_{\mathrm{out}}} W(D\phi)+W\left(\frac{\cof D\phi^\T}{\det D\phi}\right)\big|\det D\phi\big|,\end{equation} 
and similarly for $\Omega_{\mathrm{in}}$ with boundary condition $\varphi-r_I \n_1 \text{ on }\M_1-r_I \n_1$ on $\partial \Omega_{\mathrm{in}}$. Piecing these three maps together, we obtain $\phi_I:\Omega \to \Omega$ for which $E_{\match}^\sigma[\phi_I] = 0$ for all $\sigma \in (0, r_I)$. Since the other terms \eqref{eq:EsymMem},\eqref{eq:EsymBend},\eqref{eq:EsymVol} of $E^\sigma[\phi_I]$ decrease as $\sigma \searrow 0$, as soon as $\phi_I \in \mathcal{B}_0$ we obtain the bound \eqref{eq:energyboundwosigma} with
\[C_0 := E_{\mem}^{r_I}[\phi_I]+E_{\bend}^{r_I}[\phi_I]+E_{\vol}^{r_I}[\phi_I].\] 
That $\det D\phi_I >0$ almost everywhere follows directly by its definition, since $|\partial \N_{r_I} \M_1|=0$, $\varphi_{r_I}$ is a diffeomorphism, and the energy density in \eqref{eq:insideprob} is unbounded as $\det D\phi \to 0$. By its definition in \eqref{eq:insideprob} $\phi_I$ belongs to $W_{0}^{1,p}(\Omega; \R^d) + \Id$. Moreover, since $\varphi_{r_I}$ is a $C^2$ diffeomorphism and the definition of $\phi_I$ in $\Omega_{\mathrm{in}}$ and $\Omega_{\mathrm{out}}$ we also have
\[\int_{\Omega} W(D\phi)+W\left(\frac{\cof D\phi^\T}{\det D\phi}\right)\big|\det D\phi\big| < +\infty,\]
which combined with $W(A)\gs C|A|^p$ and $p>d$, allows us to apply Ball's global invertibility theorem \cite[Thm.~2]{Bal81} to obtain that $\phi_I$ is a homeomorphism and $\phi_I^{-1} \in W^{1,p}(\Omega;\R^d)$.

Now we turn our attention to estimates for $\|\dist_2 \circ \phi \|_{L^\infty(N_\sigma\M_1)}$ (and for $\|\dist_1 \circ \phi^{-1} \|_{L^\infty(N_\sigma\M_2)}$, by symmetry) that allow us to conclude \eqref{eq:bandinclusion}. This is the same type of estimate proved in \cite[Eqs.~(4.15)-(4.23)]{IglRumSch18}, and its proof follows essentially the same steps, but since at present the strength of the matching and volume terms and the width of the narrow band are not independent of each other, we will have to be more precise. The strategy is to use the matching penalization term, which contains $|\dist_2 \circ \phi - \dist_1|^2$. However this function appears multiplied by the narrow band function $\eta_\sigma \circ \dist_1$, which decays to zero as $\dist_1 \nearrow \sigma$. To treat this difficulty, we introduce a cutoff width $\hat \sigma \in (0, \sigma)$ to split the narrow band in two parts to be estimated separately. First we notice that $\phi \in C^{0,\alpha}(\Omega)$ with $\alpha := 1-d/p$, by the Morrey inequality \cite[Thm.~7.17]{GilTru01}. Since the signed distance functions $\dist_i$ are $1$-Lipschitz, we have that
\begin{equation}\label{eq:distest1}\begin{aligned}
\|\dist_2 \circ \phi \|_{L^\infty(\{|\dist_1| \ls \sigma\})} &\ls 
\sigma + \|\dist_2 \circ \phi  - \dist_1 \|_{L^\infty(\{|\dist_1| \ls \sigma\})} \\
&\ls\sigma + \| \dist_2 \circ \phi  - \dist_1 \|_{L^\infty( \{ |\dist_1| < \sigma -\hat{\sigma}\} )} + |\dist_2 \circ \phi  - \dist_1|_{C^{0,\alpha}(\{\sigma - \hat{\sigma} \ls |\dist_1| \ls \sigma\})}\,\hat{\sigma}^\alpha \\
&\ls\sigma + \| \dist_2 \circ \phi  - \dist_1 \|_{L^\infty( \{ |\dist_1| < \sigma -\hat{\sigma}\} )} + \left(1 \!+\! | \phi |_{C^{0,\alpha}(\{\sigma - \hat{\sigma} \ls |\dist_1| \ls \sigma\})}\right)\hat{\sigma}^\alpha,
\end{aligned}\end{equation}
where $|\cdot|_{C^{0,\alpha}}$ denotes the H\"older seminorm (that is, $\|\cdot\|_{C^{0,\alpha}(A)} = |\cdot|_{C^{0,\alpha}(A)} +\sup_A |\cdot|$) and we have used that every point taken into account in the last term is at a distance less than $\hat{\sigma}$ from a point appearing in the second term. Moreover, we have assumed that $\sigma < 1$ to bring up the Lipschitz constant of $\dist_1$. Now, for the last term of \eqref{eq:distest1} we have, again by the Morrey inequality and using \eqref{eq:energyboundwosigma}, that
\begin{equation}\label{eq:distest2}\begin{aligned}\left(1 \!+\! | \phi |_{C^{0,\alpha}(\{\sigma - \hat{\sigma} \ls |\dist_1| \ls \sigma\})}\right)\hat{\sigma}^\alpha &\ls \left(1 \!+\! C\|D\phi \|_{L^p(\Omega)}\right)\hat{\sigma}^\alpha \\ &\ls C\left(1 \!+\! \big(\sigma^{-\theta} E_{\vol}^\sigma[\phi]\big)^{\frac{1}{p}} \right)\hat{\sigma}^\alpha \ls C\left(1 \!+\! \sigma^{-\frac{\theta}{p}} C_0^{\frac{1}{p}} \right)\hat{\sigma}^\alpha,\end{aligned}\end{equation}
for which if $\theta =1$ the right hand side can be made arbitrarily small by choosing $\hat{\sigma} = \sigma^r$ with $r > (\alpha p)^{-1} = 1/(p-d)$. Moreover, since we need to have $\sigma^r < \sigma$, also $r>1$ is required. In the case $\theta = 0$ any choice of $\hat{\sigma} < \sigma$ suffices. 

For the second term of \eqref{eq:distest1} we apply the Gagliardo-Nirenberg interpolation inequality (\cite[Thm.~5.8]{Ada03}, \cite[Thm.~1]{Nir66}) for a bounded domain $\Sigma$ and $u \in W^{1,p}(\Sigma)$
\begin{equation}\label{eq:interpolation}\|u\|_{L^\infty(\Sigma)} \ls C \left(\|\nabla u\|_{L^p(\Sigma)}^{\frac{d}{p}} \|u\|_{L^p(\Sigma)}^{1-\frac{d}{p}} + \|u\|_{L^p(\Sigma)}\right),\end{equation}
to $u=\dist_2 \circ \phi - \dist_1$ on the open set $\Sigma = \{ |\dist_1| < \sigma -\hat{\sigma}\}$. For the last term, using the monotonicity of $\eta_\sigma$, that $\sup_{\Omega}|\dist_i|\ls \diam \Omega$ and $\phi: \Omega \to \Omega$ we can estimate as $\sigma \to 0$
\begin{equation}\label{eq:distest3}\begin{aligned}
&\| \dist_2 \circ \phi  - \dist_1 \|_{L^p(\Sigma)} \ls \|\dist_2 \circ \phi - \dist_1 \|^{\frac{p-2}{p}}_{L^\infty( \Sigma )}\|\dist_2 \circ \phi  - \dist_1\|^{\frac{2}{p}}_{L^2( \Sigma )} \\
&\quad\qquad\ls \|\dist_2 \circ \phi - \dist_1 \|^{\frac{p-2}{p}}_{L^\infty( \Sigma )}\left(\big[\eta_\sigma(\sigma - \hat{\sigma})\big]^{-1}\int_\Sigma \left(\eta_\sigma \circ \dist_1\right) |\dist_2 \circ \phi -\dist_1|^2\dd x\right)^{\frac{1}{p}} \\
&\quad\qquad\ls  \big( 2 \diam \Omega \big)^{\frac{p-2}{p}}\Big(\big[\eta_\sigma(\sigma - \hat{\sigma})\big]^{-1} \sigma^q E_{\match}^\sigma[\phi]\Big)^{\frac{1}{p}} \\
&\quad\qquad\ls C\sigma^{\frac{q}{p}}E^\sigma[\phi]^{\frac{1}{p}} \big[\eta_\sigma(\sigma - \hat{\sigma})\big]^{-\frac{1}{p}}\\
&\quad\qquad\ls C\sigma^{\frac{q}{p}}C_0^{\frac{1}{p}}\big[\eta_\sigma(\sigma - \hat{\sigma})\big]^{-\frac{1}{p}}.
\end{aligned}\end{equation}
For the derivative factor we get, using again that $\dist_2$ is $1$-Lipschitz combined with the chain rule for Lipschitz and Sobolev functions \cite[Thm.~2.1.11]{Zie89} that
\begin{equation}\label{eq:distest4}\begin{aligned}
\|\nabla ( \dist_2 \circ \phi  - \dist_1 ) \|_{L^p(\Sigma)} &= \big\|( \nabla\dist_2 \circ \phi)^\T D\phi  - \nabla\dist_1\big\|_{L^p(\Sigma)}
\\&\ls \left(\|D\phi\|_{L^p(\Omega)}+|\Omega|^{\frac{1}{p}}\right) \ls C\left((\sigma^{-\theta}E_{\vol}^\sigma[\phi])^{\frac{1}{p}}+1\right) \\
& \ls C\left(\sigma^{-\frac{\theta}{p}}C_0^{\frac{1}{p}}+1\right) \ls C\left(\sigma^{-\frac{\theta}{p}}+1\right) \ls C\sigma^{-\frac{\theta}{p}}.
\end{aligned}\end{equation}
Combining \eqref{eq:distest3} and \eqref{eq:distest4} into \eqref{eq:interpolation}, and noticing that since $\theta \in \{0,1\}$ the second term of its right hand side is dominated by the first as $\sigma \to 0$, we get that
\begin{equation}\label{eq:distest5}
\| \dist_2 \circ \phi  - \dist_1 \|_{L^\infty(\Sigma)} \ls C \sigma^{-\frac{d\theta}{p^2}} \left( \sigma^{\frac{q}{p}}\big[\eta_\sigma(\sigma - \hat{\sigma})\big]^{-\frac{1}{p}} \right)^{1-\frac{d}{p}}.
\end{equation}
Now, if $\theta = 0$ we could just choose for example $\hat{\sigma} = \sigma/2$, so that $\eta_\sigma(\sigma/2)=\sigma^{-1}\eta(1/2)$ and \eqref{eq:distest5} becomes
\begin{equation}\label{eq:distest6}
\| \dist_2 \circ \phi  - \dist_1 \|_{L^\infty(\Sigma)} \ls C \sigma^{\left(\frac{q+1}{p}\right)\left(1-\frac{d}{p}\right)},
\end{equation}
and since this exponent is positive in particular for any for any $q>0$, we obtain the desired estimate.

In the case $\theta = 1$, the decay of $\eta$ needs to be taken into account, since we saw that to control the right hand side of \eqref{eq:distest2} the cutoff width $\hat{\sigma}$ needs to be closer and closer to $0$. With $\hat{\sigma}=\sigma^r$ as discussed above, and $\eta$ a spline of order $n$ we have $\eta_\sigma(\sigma - \sigma^r) = \sigma^{-1} \eta(1 - \sigma^{r-1})\gs C \sigma^{n(r-1)-1}$. Gathering exponents in \eqref{eq:distest5} this leads to the condition 
\begin{equation}\label{eq:distest7}\begin{gathered}
-\frac{d}{p^2}+\left(1-\frac{d}{p}\right)\left(\frac{q}{p}-\frac{n(r-1)-1}{p}\right)>0, \text{ or }\\
q > n(r-1)+\frac{d}{p-d}-1,
\end{gathered}\end{equation}
which for $r>\max(1/(p-d),1)$ is precisely \eqref{eq:coupledExponents}. 
\end{proof}

\begin{rem}\label{rem:magicalexponents}
We notice that if $p = d+1$ as chosen for the density $W$ in \eqref{eq:Wbounded}, any exponent $r>1$ can be chosen in the proof, and in turn condition \eqref{eq:coupledExponents} is independent of $n$ and simplifies to $q > d - 1$. However, for the above argument to remain valid $\sigma$ should still have polynomial decay and not faster, since otherwise we would have to replace the factor $\sigma^{-q}$ by a function increasing faster as well.
\end{rem}

\begin{thm}\label{thm:existence}Assume either \eqref{eq:decoupledExponents} or \eqref{eq:coupledExponents} and \begin{equation}\label{eq:sigmari}\sigma \in (0,\min\big(r_I,\sigma_{r_I})\big),\end{equation} where $r_I$ and $\sigma_{r_I}$ are defined as in the statement of Lemma \ref{lem:distest}. Then there exists at least one minimizer of $E^\sigma$ in $\mathcal{B}_0$.
\end{thm}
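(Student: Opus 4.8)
The plan is a direct-method argument, with the extra care that the competitor class $\mathcal{B}_0$ is not weakly closed for free. Let $(\phi_k) \subset \mathcal{B}_0$ be a minimizing sequence; since $\inf_{\mathcal{B}_0} E^\sigma \ls C_0$ by \eqref{eq:energyboundwosigma}, we may assume $E^\sigma[\phi_k] \ls C_0$ for all $k$, so that Lemma \ref{lem:distest} gives $\phi_k(\N_\sigma \M_1) \subset \N_{r_I} \M_2$ and $\phi_k^{-1}(\N_\sigma \M_2) \subset \N_{r_I} \M_1$ for every $k$ (this is exactly where $\sigma < \sigma_{r_I}$ enters). In particular all the coefficients $\P_2 \circ \phi_k$, $\n_2 \circ \phi_k$, $\S_2 \circ \phi_k$ entering $E^\sigma_{\mem}$ and $E^\sigma_{\bend}$ are evaluated inside $\N_{r_I}\M_2$, where $\dist_2$ is $C^2$, so the whole energy is finite and well-defined along the sequence; the same will hold for the limit.

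For compactness I would exploit that here $\sigma$ is \emph{fixed}, so the volume term is genuinely coercive: from $W(A) \gs C|A|^p$ and $E^\sigma_{\vol}[\phi_k] \ls C_0$ one gets a uniform bound on $\|D\phi_k\|_{L^p(\Omega)}$, and hence on $\|\phi_k\|_{W^{1,p}(\Omega)}$ via $\phi_k - \Id \in W^{1,p}_0$ and Poincar\'e. Applying the change of variables \eqref{eq:inverseIntegral} with $F(y,z,A)=W(A)$ and using $\phi_k(\Omega)=\Omega$ shows moreover that $\int_\Omega W\big(D(\phi_k^{-1})\big) = \int_\Omega W\big(\cof D\phi_k^\T/\det D\phi_k\big)\,|\det D\phi_k| \ls C_0/\sigma^\theta$, so $\psi_k := \phi_k^{-1}$ is bounded in $W^{1,p}(\Omega)$ as well; since $\phi_k$ is a homeomorphism of $\overline\Omega$ equal to $\Id$ on $\partial\Omega$, so is $\psi_k$, hence $\psi_k \in W^{1,p}_0 + \Id$. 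Passing to a subsequence, $\phi_k \wkto \phi$ and $\psi_k \wkto \psi$ in $W^{1,p}(\Omega;\R^d)$, and since $p>d$ the embedding into $C^0(\overline\Omega;\R^d)$ is compact, so both convergences are uniform.

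The crux is then to check $\phi \in \mathcal{B}_0$. Passing to the limit in the identities $\psi_k(\phi_k(x))=x=\phi_k(\psi_k(x))$ — legitimate because both sequences converge uniformly and the limits are (uniformly) continuous on $\overline\Omega$ — yields $\psi\circ\phi = \Id = \phi\circ\psi$, so $\phi$ is a homeomorphism of $\overline\Omega$ with inverse $\psi \in W^{1,p}$; since $\phi = \Id$ on $\partial\Omega$ it maps $\Omega$ onto $\Omega$. To obtain $\det D\phi > 0$ a.e.\ I would argue in two steps: first, the Jacobian determinant is weakly continuous in $W^{1,p}$ for $p>d$, so $\det D\phi_k \wkto \det D\phi$ in $L^{p/d}(\Omega)$, and since $\det D\phi_k > 0$ a.e.\ and nonnegativity is weakly closed, $\det D\phi \gs 0$ a.e.; second, because $p>d$ both $\phi$ and $\psi$ have the Lusin N-property and are differentiable a.e., so the bi-Sobolev chain rule for $\psi\circ\phi=\Id$ gives $D\psi(\phi(x))\,D\phi(x) = \IdM$ at a.e.\ $x$, forcing $\det D\phi(x) \ne 0$. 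Combined with $\phi - \Id \in W^{1,p}_0$ (weakly closed), this places $\phi$ in $\mathcal{B}_0$.

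Finally, lower semicontinuity. The matching term is in fact continuous along the sequence: $\eta_\sigma(\dist_1)|\dist_2\circ\phi_k - \dist_1|^2$ converges uniformly on $\Omega$, while the bounded factor $\eta_\sigma(\dist_2\circ\phi_k)|\dist_2\circ\phi_k-\dist_1|^2$ converges uniformly and multiplies $|\det D\phi_k| = \det D\phi_k \wkto \det D\phi$ in $L^{p/d}(\Omega)\hookrightarrow L^1(\Omega)$, so the corresponding integrals converge. For the three remaining terms, I would view each summand as $\int_\Omega f(x,\phi(x),D\phi(x))$ with $f$ a nonnegative Carath\'eodory integrand — absorbing the $\phi$-dependent weights $\eta_\sigma(\dist_2\circ\phi)$ and the coefficients $\P_i,\n_i,\S_i$ into $f$ — which is polyconvex in its last argument on $\{\det A > 0\}$ by Lemma \ref{lem:polyconvexity} (and, for $E^\sigma_{\vol}$, by polyconvexity of $A \mapsto W(\cof A^\T/\det A)|\det A|$ recalled in Section \ref{subsec:storedenergy}). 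Since $\phi_k \to \phi$ strongly in $L^p$, $D\phi_k \wkto D\phi$ in $L^p$, and $p>d$ makes all subdeterminants of $D\phi_k$ converge weakly in their natural Lebesgue spaces, the lower semicontinuity theory for polyconvex functionals of nonlinear elasticity \cite{Bal77b, Dac08} applies to each term. Adding the contributions, $E^\sigma[\phi] \ls \liminf_k E^\sigma[\phi_k] = \inf_{\mathcal{B}_0} E^\sigma$, so $\phi$ is a minimizer. I expect the closedness of $\mathcal{B}_0$ under this convergence — in particular $\det D\phi > 0$ a.e.\ — together with keeping all the deformed coefficients inside the $C^2$ region of $\dist_2$, to be the most delicate points; the polyconvex lower semicontinuity is then essentially routine given Lemma \ref{lem:polyconvexity}.
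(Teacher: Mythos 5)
Your proof is correct and follows the same direct-method architecture as the paper's: coercivity from $E^\sigma_{\vol}$ and the identity boundary data, the a priori inclusion $\phi_k(\N_\sigma\M_1)\subset\N_{r_I}\M_2$ from Lemma~\ref{lem:distest} (keeping the coefficients in the $C^2$ region of $\dist_2$), uniform convergence to identify $\psi=\phi^{-1}$, and polyconvex lower semicontinuity via Lemma~\ref{lem:polyconvexity} and Dacorogna's theorem. The one place where you take a genuinely different route is the closedness of $\mathcal{B}_0$. The paper invokes Ball's global invertibility theorem \cite[Thm.~2]{Bal81} together with weak lower semicontinuity of $E^\sigma_{\vol}$ to conclude $\phi\in\mathcal{B}_0$, while you bypass Ball entirely: you pass to the limit in $\psi_k\circ\phi_k=\Id=\phi_k\circ\psi_k$ using the uniform convergence of both sequences and the uniform continuity of $\psi$, which already gives that $\phi$ is a homeomorphism of $\overline\Omega$ fixing $\partial\Omega$ with $W^{1,p}$ inverse. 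You then supply a separate argument for $\det D\phi>0$ a.e.\ (weak continuity of the Jacobian in $W^{1,p}$, $p>d$, gives $\det D\phi\gs 0$, and the a.e.\ chain rule for $\psi\circ\phi=\Id$ — valid here because both maps are differentiable a.e.\ and $\phi,\psi$ have the Lusin N-property for $p>d$ — rules out $\det D\phi=0$). This is more self-contained and in fact more explicit than the paper on exactly the membership condition that Ball's theorem uses as a hypothesis rather than a conclusion; an equivalent and slightly cleaner alternative is the area formula plus Lusin-N for $\psi$, avoiding the pointwise chain rule. Two small omissions worth flagging: you take for granted that $\dist_i\in C^2(\N_{r_I}\M_i)$ (the paper cites \cite[Lem.~14.16]{GilTru01}, \cite{Foo84}, \cite[Thm.~7.8.2(iii)]{DelZol11}), and for the bending term the continuity in $(x,v)$ of the coefficients $\Srt,\SSrt$ requires a continuity result for matrix square roots through the regularization $\mathcal{R}$ (the paper cites \cite[Thm.~1.1]{ChHu97}); neither is a gap in the argument, just unreferenced regularity input.
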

\begin{proof}
Let $\{\phi_k\}_k$ be a minimizing sequence. Using the boundary conditions and Poincar\'e inequality \cite[Eq.~(7.44)]{GilTru01} the term $E_{\vol}$ provides coercivity in $W^{1,p}(\Omega;\R^d)$, and by the Banach-Alaoglu theorem we can assume that this sequence weakly converges to some $\phi$ in $W^{1,p}(\Omega;\R^d)$. We denote $\psi_k=\left(\phi_k\right)^{-1}$ the corresponding inverses, whose existence is guaranteed by $\phi_k \in \mathcal{B}_0$. Possibly by taking another subsequence we can also assume that there is $\psi$ for which $\psi_k \wkto \psi$ weakly in $W^{1,p}(\Omega;\R^d)$, since $E_{\vol}[\phi_k] \gs \|D\psi_k\|_{L^p(\Omega)}$ as well. Now, on the one hand we can apply Ball's global invertibility theorem \cite[Thm.~2]{Bal81} and weak lower semicontinuity of $E_{\vol}$ to obtain that $\phi$ is a homeomorphism from $\Omega$ to $\Omega$ and $\phi^{-1} \in W^{1,p}(\Omega;\R^d)$, that is $\phi \in \mathcal{B}_0$. On the other, since $p>d$ the functions are uniformly continuous with modulus of continuity uniform in $k$, by coercivity in $W^{1,p}(\Omega; \R^d)$ and the Morrey inequality. Therefore by the Arzel\`{a}-Ascoli theorem, possibly by taking another subsequence we have that the convergence is also uniform, which allows us to conclude that the limit of inverses is the inverse of the limit, that is, $\psi=\phi^{-1}$.

We then note that the $\dist_i$ are $C^2$ in $\N_{r_I} \M_i$. To see this, since $\M_1$ is assumed to be $C^2$ we can apply \cite[Lem.~14.16]{GilTru01} or the results of \cite{Foo84} for the unsigned distance function on $\N_{r_I}\M_i \setminus \M_i$, and notice that the signed distance function $\dist_i$ also inherits this regularity \cite[Thm.~7.8.2(iii)]{DelZol11} in a neighborhood of each point of $\M_i$, which is compact. We can then apply \eqref{eq:sigmari} and Lemma \ref{lem:distest} to obtain that for $k$ large enough we have \[\phi_k\left( \N_\sigma \M_1 \right) \subset \N_{r_I} \M_2 \text{, and } \phi_k^{-1}\left( \N_\sigma \M_2 \right) \subset \N_{r_I} \M_1,\]
which implies that at values attained by $\phi_k$, the integrands \eqref{eq:Fmemdirect} and \eqref{eq:Fmeminverse} are continuous in their last two arguments. The same conclusion holds true for \eqref{eq:Fbenddirect} and \eqref{eq:Fbendinverse} after using a continuity result for square roots of nonnegative definite matrix-valued functions \cite[Thm.~1.1]{ChHu97} to account for the presence of $\mathcal{R}$ in $\S_i$. Lower semicontinuity of $E_{\mem}^\sigma, E_{\bend}^\sigma$ along $\phi_k$ then follows by Lemma \ref{lem:polyconvexity} and a lower semicontinuity theorem for integral functionals with Carath\'{e}odory energy densities which are polyconvex in their derivative argument \cite[Theorem 8.16]{Dac08}. We conclude that $\phi$ is the desired minimizer.
\end{proof}

\section{Scaling limits for non-symmetric energies}\label{sec:limits}
We now turn our attention to the limit of level set matching energies as the parameter $\sigma$ controlling the size of the narrow band goes to zero. When the symmetric energies $E^\sigma$ of \eqref{eq:Esym} are used, one should work in classes of invertible functions, which strongly limit the types of analysis possible (see Remark \ref{rem:whynonsym} below). Therefore, in this section, we only penalize the direct transformation and limit ourselves to the ``non-symmetric'' family of functionals $\mathcal{E}^\sigma:W^{1,p}_0(\Omega;\R^d)+\Id \to \R^+ \cup \{0\}$
\begin{equation}\label{eq:Edir}
\mathcal{E}^\sigma = \mathcal{E}_{\match}^\sigma+\mathcal{E}_{\mem}^\sigma+\mathcal{E}_{\bend}^\sigma+\mathcal{E}_{\vol}^\sigma,
\end{equation}
in which the contributions of the inverse deformation are not considered, so that 
\begin{equation}\label{eq:EdirMatch}
\mathcal{E}_{\match}^\sigma[\phi]:=\frac{1}{\sigma^q}\int_\Omega \eta_\sigma(\dist_1) |\dist_2 \circ \phi - \dist_1|^2 \dd x,
\end{equation}
\begin{equation}\label{eq:EdirMem}
\mathcal{E}_{\mem}^\sigma[\phi]:=\int_\Omega \eta_\sigma(\dist_1) W\big( (\P_2 \circ \phi) D\phi\, \P_1  + (\n_2 \circ \phi) \otimes \n_1 \big) \dd x,
\end{equation}
\begin{equation}\label{eq:EdirBend}
\mathcal{E}_{\bend}^\sigma[\phi]:=\int_\Omega \eta_\sigma(\dist_1) W\big( \Lambda[\S_1, \S_2 \circ \phi, D\phi, \n_1, \n_2 \circ \phi] \big) \dd x \text{, and }
\end{equation}
\begin{equation}\label{eq:EdirVol}
\mathcal{E}_{\vol}^\sigma[\phi]:=\sigma^\theta\int_\Omega W(D\phi) \dd x.
\end{equation}
We assume that $W\in C^1(\R^{d\times d})$ is such that for all $A, B \in \R^{d \times d}$
\begin{align}
W(A) &\gs C|A|^p - \frac{1}{C}, \text{ and }\label{eq:Wcoer}\\
W(A) &\ls C \big(|A|^p+1\big).\label{eq:Wbound}
\end{align}
These conditions are in particular satisfied by the density \eqref{eq:Wbounded}. The bound \eqref{eq:Wbound} combined with quasiconvexity implies (see \cite[p.~6]{Mar85} or \cite[Lem.~6.6]{Led18}) the continuity property 
\begin{equation}|W(A)-W(B)| \ls C |A-B|\big(1+|A|^{p-1}+|B|^{p-1}\big)\label{eq:Wcont}.\end{equation}
Alternatively, one can also check \eqref{eq:Wcont} for \eqref{eq:Wbounded} directly. For that, just recall \cite[Thm.~4.7]{Fri82} the inequality $|\det A- \det B|\ls C|A-B|\max(|A|,|B|)^{d-1}$ and notice that the function $t\mapsto \sqrt{1+(t-2)^2}$ has bounded derivative.

Since $\eta_\sigma$ has constant integral, the energy scaling of $\mathcal{E}^\sigma_\mem$ is the one of the classical membrane limit \cite{LedRao95, LedRao96}, whose results we apply directly. The structure of the proof is based on the methods delineated in \cite{AceButPer88, BesKraMic09}, where problems for thin inclusions or `welding' are considered. In particular, we will use the following lemma for integration by parts of non-intrinsic products on a hypersurface, analogous to  \cite[Prop.~II.2]{AceButPer88}:

\begin{lem}\label{lem:intbyparts}Let $N \in W^{1,p}(\Omega;\R^{d \times d})$ and $v \in W^{1,p}(\Omega;\R^d)$. Then for the traces of $v$ and $N$ on $\M_1$ the following are well defined and equal:
\begin{equation}\label{eq:intbyparts}\int_{\M_1}N : \Dt v \, \dd \mathcal{H}^{d-1} = - \sum_{i=1}^d \int_{\M_1} \Div_{\M_1}\!\big( [N \P_1]_i\big) \,v^i \, \dd \mathcal{H}^{d-1},\end{equation}
where $D_t v := Dv \P_1$ is the tangential derivative of $v$ on $\M_1$, $[N\P_1]_i$ is the $i$-th row of $N\P_1$, $v^i$ the $i$-th component of $v$, and $\Div_{\M_1}\!( [N \P_1]_i )$ is the Riemannian divergence on $\M_1$ applied to the tangential vector field $[N \P_1]_i$. 
\end{lem}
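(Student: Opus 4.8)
\emph{Proof plan.} The plan is to recast both sides of \eqref{eq:intbyparts} as integrals of intrinsic quantities on the closed (compact, boundaryless) $C^2$ manifold $\M_1$ and then to reduce the claimed identity to the divergence theorem on $\M_1$ by approximating with smooth data. Since at points of $\M_1$ the matrix $\P_1 = \IdM - \n_1\otimes\n_1$ is the orthogonal projection onto $T\M_1$, each row $[N\P_1]_i$ of $N\P_1$ is a tangent vector field on $\M_1$, and the $i$-th row of $\Dt v = Dv\,\P_1$ is to be read as the intrinsic tangential gradient $\nabla_{\M_1} v^i$ of the trace $v^i|_{\M_1}$. A direct computation then gives the pointwise identity on $\M_1$
\[
N : \Dt v \;=\; \sum_{i=1}^d \big[N\P_1\big]_i \cdot \nabla_{\M_1} v^i ,
\]
so that after summation over $i$ it suffices to prove, for a single tangent field $X \in W^{1,p}(\M_1;\R^d)$ and scalar $f\in W^{1,p}(\M_1)$, the Green-type identity
\[
\int_{\M_1} X \cdot \nabla_{\M_1} f \,\dd\mathcal{H}^{d-1} \;=\; -\int_{\M_1} \big(\Div_{\M_1} X\big)\, f \,\dd\mathcal{H}^{d-1}
\]
and then to apply it with $X = [N\P_1]_i$ and $f = v^i$.

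For $C^1$ data this is classical: for a $C^1$ tangent field $Y$ on $\M_1$, in a $C^2$ atlas with a subordinate partition of unity one has $\Div_{\M_1} Y = \tfrac{1}{\sqrt{g}}\,\partial_\alpha(\sqrt{g}\,Y^\alpha)$ in each chart, so $\int_{\M_1}\Div_{\M_1} Y\,\dd\mathcal{H}^{d-1}$ reduces to a finite sum of integrals of divergences of compactly supported vector fields on open subsets of $\R^{d-1}$, hence vanishes; combining this with the Leibniz rule $\Div_{\M_1}(fX) = f\,\Div_{\M_1}X + X\cdot\nabla_{\M_1}f$ applied to $Y = fX$ yields the displayed identity whenever $X$ and $f$ are $C^1$.

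To reach general $X\in W^{1,p}(\M_1;\R^d)$ tangent and $f\in W^{1,p}(\M_1)$, I would use density of $C^1(\M_1)$ in $W^{1,p}(\M_1)$ (available since $\M_1$ is a compact $C^2$ manifold): pick $C^1$ tangent fields $X_k\to X$ and $C^1$ functions $f_k\to f$ in $W^{1,p}(\M_1)$, write the identity for the pairs $(X_k,f_k)$, and let $k\to\infty$. Since $p > d > \dim\M_1$, the Morrey inequality on $\M_1$ gives $X_k\to X$ and $f_k\to f$ uniformly, while $\Div_{\M_1}X_k\to\Div_{\M_1}X$ and $\nabla_{\M_1}f_k\to\nabla_{\M_1}f$ in $L^p(\M_1)$; both integrals then converge by Hölder's inequality, after splitting e.g.\ $X_k\cdot\nabla_{\M_1}f_k - X\cdot\nabla_{\M_1}f$ into $(X_k-X)\cdot\nabla_{\M_1}f_k$ and $X\cdot(\nabla_{\M_1}f_k-\nabla_{\M_1}f)$ and using the uniform $L^p$ bound on $\nabla_{\M_1}f_k$. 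Summing the resulting identities over $i = 1,\dots,d$ gives \eqref{eq:intbyparts}, and the same estimates show that both sides are finite and well defined.

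The step I expect to be the real obstacle — and where the hypotheses must be used carefully — is showing that the traces of $v$ and $N$ on $\M_1$ genuinely lie in $W^{1,p}(\M_1)$, so that $\Dt v$ and $\Div_{\M_1}([N\P_1]_i)$ are bona fide $L^p$ functions on $\M_1$ rather than mere distributions and the two surface integrals make sense at all. A priori the trace of a $W^{1,p}(\Omega)$ function on a hypersurface only lies in the fractional space $W^{1-1/p,p}(\M_1)$; recovering the full first-order tangential regularity on $\M_1$ should use the $C^2$ smoothness of $\M_1$ and of $\dist_1$ on the tubular neighbourhood $\N_{r_I}\M_1$ (so that $\M_1$ is one leaf of the $C^1$ foliation $\{\dist_1 = s\}$), together with the way the lemma is applied in Section~\ref{sec:limits}, where the competitors entering it are controlled directly on $\M_1$. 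Once this trace regularity is secured, the remainder is the routine covariant integration by parts sketched above.
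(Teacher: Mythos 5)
Your first step — reducing to $N:\Dt v = \sum_i [N\P_1]_i \cdot \nabla_{\M_1}v^i$, establishing the Green identity for $C^1$ data on the boundaryless $\M_1$ via the Leibniz rule and the vanishing of $\int_{\M_1}\Div_{\M_1}Y$ — is exactly the paper's first step. The gap is in the second step, and you have correctly located but not resolved it. You want to upgrade the traces to $W^{1,p}(\M_1)$ so that both sides of \eqref{eq:intbyparts} are honest Lebesgue integrals and you can run a density argument in $W^{1,p}(\M_1)$. But this upgrade is simply not available: the trace of a $W^{1,p}(\Omega)$ function on a $C^2$ hypersurface lies in $W^{1-\frac{1}{p},p}(\M_1)$ and in general nowhere better, and neither the $C^2$ smoothness of $\dist_1$ nor the way the lemma is applied in Section~\ref{sec:limits} changes this (in the application the competitor $v=\phi$ is only a weak $W^{1,p}(\Omega)$ limit). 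So the hypotheses of your density lemma (``$X\in W^{1,p}(\M_1;\R^d)$ tangent and $f\in W^{1,p}(\M_1)$'') are never met, and the argument stalls exactly where you flagged the difficulty.

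The paper's resolution is to give up on making $\Dt v$ and $\Div_{\M_1}([N\P_1]_i)$ into $L^p(\M_1)$ functions. Instead, both sides of \eqref{eq:intbyparts} are read as duality pairings: $N|_{\M_1}\in W^{1-\frac1p,p}(\M_1)$ is paired against $\Dt v \in W^{-\frac1p,p}(\M_1)$ (and symmetrically on the right-hand side), which is legitimate because
\[
W^{-\frac1p,p}(\M_1) \subset W^{-\frac1p,p'}(\M_1) = \left(W^{1-\frac1p,p}(\M_1)\right)',
\]
the first inclusion holding since $p > d \gs 2$ implies $p' < p$ and $\M_1$ is compact, and the identification of the dual being \cite[Thm.~3.54]{DemDem12}. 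With this interpretation in hand, the classical $C^1$ identity extends by density of $C^1(\M_1)$ in $W^{1-\frac1p,p}(\M_1)$ (rather than in $W^{1,p}(\M_1)$), using that $\P_1 \in C^1(\M_1)$ and $\M_1$ is $C^2$ so that multiplication by $\P_1$ and $\Div_{\M_1}$ are continuous in the relevant spaces. This duality step, which exploits the slack $p'<p$ coming from $p>d$, is the missing ingredient in your plan.
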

\begin{proof}We first assume that $N \in C^1(\M_1;\R^{d \times d})$ and $v \in C^1(\M_1;\R^d)$ to check \eqref{eq:intbyparts}. Since $\P_1$ is symmetric, $\P_1^2=\P_1$ and since the matrix trace is invariant under cyclic permutations, we have
\[\begin{aligned}N:\Dt v &= \tr\big( N^\T Dv \P_1 \big) = \tr\big( \P_1 N^\T Dv \big) = tr\big( \P_1^2 N^\T Dv \big) \\
&= \tr\big( \P_1 N^\T Dv \P_1 \big) = \tr\big( (N \P_1)^\T Dv \P_1 \big) = N \P_1 : \Dt v,\end{aligned}\]
Finally, since the metric on $\M_1$ is induced by its immersion into $\R^d$ and, being compact, it has no boundary, using the divergence theorem on $\M_1$ (see \cite[Section III.7]{Cha06}, for example) we get
\[\int_{\M_1}N : \Dt v = \int_{\M_1}N \P_1 : \Dt v = \sum_{i=1}^d \int_{\M_1} [N \P_1]_i \cdot \Dt v^i =-\sum_{i=1}^d \int_{\M_1} \Div_{\M_1}\!\big( [N \P_1]_i\big)\,v^i,\]
as claimed. Now, if $v \in W^{1,p}(\Omega;\R^d), M \in W^{1,p}(\Omega;\R^{d \times d})$ the traces of $v$ and $N$ on $\M_1$ are \cite[Prop.~3.31]{DemDem12} in $W^{1-\frac{1}{p}, p}(\M_1;\R^d)$ and $W^{1-\frac{1}{p}, p}(\M_1;\R^{d \times d})$, respectively. Since $\P_1 \in C^1(\M_1; \R^{d\times d})$ and $\M_1$ is $C^2$, the formula \eqref{eq:intbyparts} will also hold if both sides are well defined. This follows by the embedding (see \cite[Thm.~3.54]{DemDem12} for the dual space)
\[W^{-\frac{1}{p}, p}(\M_1) \subset \left(W^{1-\frac{1}{p}, p}(\M_1)\right)' = W^{\frac{1}{p'}-1, p'}(\M_1),\]
which holds because $1/p'-1 = -1/p$ and since $p>d\gs2$ we have $p' < p$, while $\M_1$ is compact.
\end{proof}

We are now ready to state and prove our convergence result. For convenience we denote for $x \in \Omega$ the tangential-projected derivative as
\begin{gather}\label{eq:Dttdef}\Dtt \phi(x) := \P_2(\phi(x)) D\phi(x)\, \P_1(x)  + \n_2(\phi(x)) \otimes \n_1(x),\text{ so that }\\
\mathcal{E}_{\mem}^\sigma[\phi]:=\int_\Omega \eta_\sigma(\dist_1) W(\Dtt\phi).
\end{gather}
Our main point is that this definition allows us to recover a surface functional with the same structure in the limit (compare the integrands in \eqref{eq:limitfunctional} and \eqref{eq:EdirMem}), which is typically not the case in dimension reduction problems. For the membrane problem in \cite{LedRao95, LedRao96} a quasiconvex envelope appears in the limit problem, which turns out to be trivial in our case.

\begin{rem}\label{rem:nosequences}
Since $L^p(\Omega; \R^d)$ is a metric space, $\Gamma$-convergence can be characterized \cite[Def.~1.5]{Bra02} in terms of the $\liminf$ and $\limsup$ inequalities. To simplify the notation we will continue to write the continuous parameter $\sigma \to 0$ while speaking of sequences. Strictly, what is implied is $\Gamma$-convergence of $\mathcal{E}^{\sigma_j}$ for any sequence $\{\sigma_j\}_{j \in \mathbb{N}}$ with $\sigma_j \to 0$. Likewise, when we speak of subsequences of $\phi_\sigma$, which are not relabelled, we mean sequences $\phi_{\sigma_j}$ for some sequence $\{\sigma_j\}_j$.
\end{rem}

\begin{thm}\label{thm:gammaconv}Let $W$ be polyconvex and satisfy \eqref{eq:Wcoer}, \eqref{eq:Wbound} and \eqref{eq:Wcont}. Define the set
\[\mathcal{T}_p:=\left\{\phi \in W^{1,p}_0(\Omega;\R^d)+\Id \left|\,\;\restr{\Dtt\phi}{\M_1}\in L^p(\M_1; \R^{d \times d}),\;\phi(\M_1)=\M_2 \right. \right\}.\]
Then assuming $\theta = 0$ and $q>0$, the family $\mathcal{E}^\sigma$ $\Gamma$-converges in the $L^p(\Omega; \R^d)$ topology as $\sigma \to 0$ to the functional $\mathcal{E}^0$ defined for $\phi \in \mathcal{T}_p$ by
\begin{equation}\label{eq:limitfunctional}\mathcal{E}^0[\phi]:=\int_{\M_1}W( \Dtt\phi )+W\big( \Lambda[D\phi, \S_1, \S_2 \circ \phi, \n_1, \n_2 \circ \phi] \big)\dd \mathcal{H}^{d-1} + \int_\Omega W(D\phi) \dd x\end{equation}
and $\mathcal{E}^0[\phi]=+\infty$ if $\phi \notin \mathcal{T}_p$. Moreover $\mathcal{E}^0$ possesses at least one minimizer in $W^{1,p}_0(\Omega;\R^d)+\Id$.
\end{thm}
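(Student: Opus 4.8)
The plan is to prove the $\Gamma$-convergence statement by establishing the two inequalities separately, together with the equicoercivity that makes the minimization claim follow. For the equicoercivity, note that since $\theta = 0$, the term $\mathcal{E}_{\vol}^\sigma[\phi] = \int_\Omega W(D\phi)$ is independent of $\sigma$ and, by the coercivity bound \eqref{eq:Wcoer} together with the Dirichlet boundary condition and the Poincar\'e inequality, controls $\|\phi\|_{W^{1,p}(\Omega;\R^d)}$. Hence any sequence $\phi_\sigma$ with $\sup_\sigma \mathcal{E}^\sigma[\phi_\sigma] < \infty$ is bounded in $W^{1,p}_0(\Omega;\R^d)+\Id$, and by Morrey's embedding precompact in $L^p(\Omega;\R^d)$ (indeed in $C^{0,\alpha}$). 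Once the $\Gamma$-convergence and this equicoercivity are in hand, the existence of a minimizer of $\mathcal{E}^0$ is the standard fundamental theorem of $\Gamma$-convergence \cite[Thm.~7.8, Cor.~7.20]{Bra02}: take a minimizing sequence for any fixed small $\sigma$, or more directly, a sequence of (almost-)minimizers $\phi_\sigma$ of $\mathcal{E}^\sigma$; equicoercivity extracts an $L^p$-convergent subsequence, and the $\liminf$ inequality identifies the limit as a minimizer of $\mathcal{E}^0$.

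For the $\liminf$ inequality, I would take $\phi_\sigma \to \phi$ in $L^p$; if $\liminf \mathcal{E}^\sigma[\phi_\sigma] = +\infty$ there is nothing to prove, so assume a uniform energy bound along a subsequence. Equicoercivity upgrades the convergence to weak convergence in $W^{1,p}$ and uniform convergence, so $\phi \in W^{1,p}_0(\Omega;\R^d)+\Id$. The matching term $\mathcal{E}^\sigma_{\match}$, using $q>0$ and the localization of $\eta_\sigma$ near $\M_1$ exactly as in the proof of Lemma \ref{lem:distest} (estimates \eqref{eq:distest3}--\eqref{eq:distest6}), forces $\|\dist_2 \circ \phi - \dist_1\|_{L^\infty(\N_{c\sigma}\M_1)} \to 0$, hence in the limit $\dist_2 \circ \phi = 0$ on $\M_1$, i.e.\ $\phi(\M_1) \subset \M_2$; combined with the Dirichlet condition, degree theory and the analogous argument show $\phi(\M_1) = \M_2$, so $\phi \in \mathcal{T}_p$ once we also check $\restr{\Dtt\phi}{\M_1} \in L^p$. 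For the volume term, lower semicontinuity under weak $W^{1,p}$ convergence is immediate from polyconvexity of $W$ and \cite[Thm.~8.16]{Dac08}. The surface terms are the heart of the matter: since $\eta_\sigma \dd x$ concentrates on $\M_1$ with total mass $1$, one expects $\int_\Omega \eta_\sigma(\dist_1) W(\Dtt\phi_\sigma) \to \int_{\M_1} W(\Dtt\phi)\,\dd\mathcal{H}^{d-1}$, and similarly for the bending term; the key point stressed in the introduction is that the \emph{projected} tangential derivative $\Dtt$ trivializes the quasiconvexification that normally appears in membrane limits (the $Q W$ of \cite{LedRao95}), because $\Dtt$ only sees tangential directions and the normal block is fixed. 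The plan is to combine the slicing/coarea structure of the tubular neighborhood (writing points as $y + t\n_1(y)$) with the membrane-limit machinery of \cite{LedRao95, LedRao96}, adapted via the integration-by-parts identity in Lemma \ref{lem:intbyparts} in the style of \cite[Prop.~II.2]{AceButPer88, BesKraMic09}, to pass the lower bound onto the trace $\restr{\Dtt\phi}{\M_1}$; the Lipschitz/continuity estimate \eqref{eq:Wcont} and the continuity of $\S_i^{\pm 1/2}$ (via \cite{ChHu97}) are what let one replace $\phi_\sigma$-dependent coefficients $\P_2\circ\phi_\sigma$, $\n_2\circ\phi_\sigma$, $\S_2\circ\phi_\sigma$ by their limits uniformly.

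For the $\limsup$ (recovery sequence) inequality, given $\phi \in \mathcal{T}_p$ the natural first guess is the constant sequence $\phi_\sigma \equiv \phi$: then $\mathcal{E}^\sigma_{\vol}[\phi] = \int_\Omega W(D\phi)$ exactly, and since $\phi(\M_1)=\M_2$ we have $\dist_2\circ\phi = 0 = \dist_1$ on $\M_1$ so the matching term is $o(1)$ (using $q>0$ and a Taylor/Lipschitz estimate of $\dist_2\circ\phi - \dist_1$ in the $c\sigma$-band together with $\phi\in W^{1,p}\subset C^{0,\alpha}$). The remaining task is to show $\int_\Omega \eta_\sigma(\dist_1) W(\Dtt\phi) \to \int_{\M_1} W(\Dtt\phi)\,\dd\mathcal{H}^{d-1}$ and likewise for the bending term; this is a Lebesgue-point/approximate-continuity statement for the trace, and here one may need to first prove it for $\phi$ smooth (so the integrand is continuous across the band and the coarea formula applies directly, with the Jacobian factor of the map $(y,t)\mapsto y+t\n_1(y)$ tending to $1$) and then pass to general $\phi \in \mathcal{T}_p$ by density, using \eqref{eq:Wbound} and \eqref{eq:Wcont} to control the error — this density step must be done within $\mathcal{T}_p$, i.e.\ approximating while preserving $\phi(\M_1)=\M_2$, which is where one invokes the membrane-limit density results of \cite{LedRao95, LedRao96} and the welding constructions of \cite{AceButPer88, BesKraMic09}. \textbf{The main obstacle} I anticipate is precisely this interplay in the surface terms between (i) making the trace of $\Dtt\phi$ on $\M_1$ well-defined and $L^p$ for competitors of finite limiting energy, and (ii) constructing recovery sequences for non-smooth $\phi \in \mathcal{T}_p$ that respect the hard constraint $\phi(\M_1)=\M_2$ while driving the narrow-band integral to the surface integral; both hinge on transplanting the membrane-limit/welding technology to the level-set setting, with Lemma \ref{lem:intbyparts} and the weak continuity of $\det\Dtt$ from \cite[Lem.~4.1]{IglRumSch18} doing the work that quasiconvexity does in the classical case. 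The matching and volume terms, by contrast, should be comparatively routine.
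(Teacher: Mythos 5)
Your $\liminf$ side is on the right track and aligns with the paper's Steps 1--2: the equicoercivity via the $\theta=0$ volume term, lower semicontinuity of the volume term from polyconvexity, and the combination of slicing plus Lemma \ref{lem:intbyparts} plus the Le Dret--Raoult membrane machinery (with the observation that the projected tangential derivative makes the quasiconvexification trivial) are precisely the ingredients used. One small correction: the lower bound is obtained from polyconvexity of the density \emph{via} Lemma \ref{lem:polyconvexity}, not from weak continuity of $\det \Dtt$; the latter is mentioned by the paper only to motivate the polyconvexity result.

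The genuine gap is in the $\limsup$ inequality. The constant recovery sequence $\phi_\sigma \equiv \phi$ does \emph{not} work, for two independent reasons. First, the matching term: since $\phi \in W^{1,p} \hookrightarrow C^{0,\alpha}$ with $\alpha = 1 - d/p$, and $\dist_2\circ\phi - \dist_1$ vanishes on $\M_1$, the best general bound in the band is $|\dist_2\circ\phi - \dist_1| \ls C\sigma^\alpha$ on $\N_\sigma\M_1$, so
\[
\mathcal{E}^\sigma_{\match}[\phi] \;=\; \frac{1}{\sigma^q}\int_\Omega \eta_\sigma(\dist_1)\,|\dist_2\circ\phi - \dist_1|^2 \;\ls\; C\,\sigma^{2\alpha - q},
\]
which diverges whenever $q \gs 2(1-d/p)$, while the theorem is asserted for arbitrary $q>0$. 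Second, the surface terms: for $\phi \in \mathcal{T}_p$, the quantity $\Dtt\phi$ is only an $L^p$ function on $\Omega$, and the narrow-band average $\int_\Omega \eta_\sigma(\dist_1)\,W(\Dtt\phi)$ is an average over offsets; it converges to the trace $\int_{\M_1} W(\Dtt\phi)$ only for generic offsets, not necessarily at $t=0$. Your fallback by density is problematic: the approximation must preserve the hard constraint $\phi_n(\M_1)=\M_2$ while controlling $\|D\phi - D\phi_n\|_{L^p(\Omega)}$ \emph{and} $\|\Dtt\phi - \Dtt\phi_n\|_{L^p(\M_1)}$ simultaneously, and even then the diagonalization is delicate because the upper bound you want to transfer is not obviously continuous on the dense class.

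The paper sidesteps both issues with a single explicit modification: it sets
\[
\phi_\sigma\big(y+t\n_1(y)\big) = \phi(y)+t\,\n_2\big(\phi(y)\big)\quad\text{for }|t|\ls\sigma,
\]
blended through a cutoff $\tau_\sigma$ back to $\phi$ in $\sigma < |t| < 2\sigma$. This makes $\dist_2\circ\phi_\sigma = \dist_1$ \emph{exactly} in $\N_\sigma\M_1$, so the matching term is identically zero for all $q$, and it makes $\Dtt\phi_\sigma(y+t\n_1(y)) = \Dtt\big(\phi + t\,\n_2\circ\phi\big)(y)$, i.e.\ a quantity determined by the trace $\restr{\phi}{\M_1}$ alone, so that the narrow-band integral is a genuine average of surface integrals over $t$, controlled via \eqref{eq:Wcont} and $\restr{\Dtt\phi}{\M_1}\in L^p(\M_1)$. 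The price paid is a transition layer $\{\sigma<|\dist_1|<2\sigma\}$ whose contribution to the volume term must be shown to vanish; this requires a Poincar\'e inequality in the normal direction with constant $C\sigma^p$ on the shrinking band. None of this appears in your proposal, and the constant-sequence plan cannot be repaired without essentially reconstructing the paper's explicit modification.
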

\begin{proof}
Throughout the proof, to simplify notation we will not consider the bending-like term $\mathcal{E}^\sigma_{\bend}$. Since it consists on a pre- and post-stretched modification of $\mathcal{E}^\sigma_{\mem}$ where curvature-dependent coefficients are introduced, the proof for $\mathcal{E}^\sigma_{\mem}$ (which already contains varying coefficients depending on the deformed configuration) applies with completely straightforward modifications.

\medskip
\noindent \emph{Step 1: Energy bounds on a sequence imply tangential regularity of its limit.} 
\medskip

Let $\phi_\sigma \to \phi$ in $L^p(\Omega;\R^d)$ and assume that the sequence $\{\mathcal{E}^\sigma[\phi_\sigma]\}_\sigma$ is bounded as $\sigma \to 0$. Since $\theta = 0$, taking into account \eqref{eq:Wcoer} and that we work in $W^{1,p}_0(\Omega; \R^d)+\Id$, using the Poincar\'e inequality we have $\|\phi_\sigma\|^p_{W^{1,p}(\Omega)} \ls C(\mathcal{E}^\sigma[\phi_\sigma]+1) \ls C$, so that upon taking a subsequence we have $\phi_\sigma \wkto \phi$ weakly in $W^{1,p}(\Omega;\R^d)$ and also converging uniformly.

At first glance, the trace of $\phi$ on $\M_1$ is only in $W^{1-\frac{1}{p}, p}(\M_1;\R^d)$. However, as in \cite[Lem.~III.1]{AceButPer88} boundedness of the energies along the sequence $\phi_\sigma$ implies additional regularity for the trace and $\restr{\Dtt\phi}{\M_1} \in L^p(\M_1;\R^d)$. For this, we would like to exploit the bound 
\begin{equation}\label{eq:Emembound1}\mathcal{E}^\sigma_{\mem}[\phi_\sigma]=\int_\Omega \eta_\sigma (\dist_1 ) W(\Dtt \phi_\sigma) \ls C.\end{equation}
Our first step is to notice that an estimate for $\|\dist_2 \circ \phi_\sigma\|_{L^\infty(\N_\sigma \M_1)}$ analogous to that of Lemma \ref{lem:distest} also holds here. The main difference is that our proof of Lemma \ref{lem:distest} assumed that the deformations under consideration map $\Omega$ to $\Omega$, but now this is not guaranteed since $\mathcal{E}_{\vol}^\sigma$ contains no injectivity penalization. This difficulty can be overcome by modifying \eqref{eq:distest3} with the estimate, obtained using $\restr{\phi_\sigma}{\partial \Omega} = \Id$ and that $\dist_1,\dist_2$ are $1$-Lipschitz,
\[\begin{aligned}\|\dist_2 \circ \phi_\sigma- \dist_1 \|_{L^\infty( \Sigma )}&\ls \sup_{x \in \partial \Omega}(\dist_2 \circ \phi_\sigma)(x) + |\phi_\sigma|_{C^{0,\alpha}(\Omega)} (\diam \Omega )^\alpha + \diam \Omega \\&\ls |\phi_\sigma|_{C^{0,\alpha}(\Omega)} (\diam \Omega )^\alpha + 2\diam \Omega ,\end{aligned}\]
on which again one can use the Morrey inequality and energy bounds. This modification affects the exponents appearing in \eqref{eq:distest5}, but only by terms proportional to $\theta$, which in this case is zero. Therefore, for some $\sigma_0$ small enough and since $\mathcal{E}^\sigma[\phi_\sigma] \ls C$, we have that
\begin{equation}\label{eq:contgeom}\dist_1 \in C^2(\N_\sigma \M_1), \dist_2 \in C^2\left(\phi\big(\N_\sigma \M_1\big)\right) \text{ and } \dist_2 \in C^2\left(\phi_\sigma\big(\N_\sigma \M_1\big)\right) \text{ for all } \sigma \in (0, \sigma_0).\end{equation}
To simplify the computations that follow, we first replace the coefficients appearing in $\mathcal{E}^\sigma_{\mem}[\phi_\sigma]$ that depend on $\phi_\sigma$ by those corresponding to the limiting function $\phi$. Using \eqref{eq:contgeom} so that $\P_1,\P_2, \n_1,\n_2$ are uniformly continuous where they are evaluated, and the continuity hypothesis \eqref{eq:Wcont} for the matrix fields $A_\sigma = (\P_2 \circ \phi_\sigma) D\phi_\sigma \P_1 + (\n_2 \circ \phi_\sigma) \otimes \n_1$ and $A := ( \P_2 \circ \phi) D\phi_\sigma \P_1 + (\n_2 \circ \phi) \otimes \n_1$ we obtain
\[|W(A_\sigma)-W(A)|\ls C (|D\phi_\sigma|+1)|\phi_\sigma - \phi|\left(1+|D\phi|^{p-1}+|D\phi_\sigma|^{p-1}\right).\]
Integrating and using the H\"older inequality, we see that the error we commit in the energy can be bounded by
\begin{equation}\label{eq:approxBound}C\|\phi_\sigma - \phi\|_{L^\infty(\Omega)}\left(1+\|D\phi_\sigma\|_{L^p(\Omega)}\|D\phi\|_{L^p(\Omega)}^{p-1}+\|D\phi_\sigma\|^p_{L^p(\Omega)}\right),\end{equation}
which clearly tends to zero as $\sigma \to 0$. With these fixed coefficients we denote
\[\overline\Dtt \phi_\sigma := (\P_2\circ \phi) D\phi_\sigma\, \P_1  + (\n_2 \circ \phi) \otimes \n_1.\]
Using the boundedness of $W$ in \eqref{eq:Wbound}, the Tonelli theorem to slice along offset hypersurfaces, and the change of variables $y \to y +t\n_1(y)$, the bounds \eqref{eq:Emembound1} and \eqref{eq:approxBound} mean that 
\begin{equation}\label{eq:Emembound2}\begin{aligned}
&\int_\Omega \eta_\sigma(\dist_1) |\overline\Dtt\phi_{\sigma}(x)|^p \dd x \\
&\enskip= \frac{1}{\sigma} \int_{\supp{\eta_\sigma(\dist_1)}} \eta\left(\frac{\dist_1}{\sigma} \right) |\overline\Dtt\phi_{\sigma}(x)|^p \dd x\\ 
&\enskip=\int_{-\sigma}^{\sigma} \int_{\M_1}\eta_\sigma(t) \left|\overline\Dtt\phi_{\sigma}\big(y + t \n_1(y)\big)\right|^p \left|\det\big(\IdM + t D \n_1(y)\big)\right| \dd \mathcal{H}^{d-1}(y) \dd t \\
&\enskip=\int_{\M_1} \int_{-\sigma}^{\sigma} \eta_\sigma(t) \left|\overline\Dtt\phi_{\sigma}\big(y + t \n_1(y)\big)\right|^p \left|\det\big(\IdM + t D \n_1(y)\big)\right| \dd t \dd \mathcal{H}^{d-1}(y)\ls C.
\end{aligned}\end{equation}
Using \eqref{eq:contgeom} we have that 
\begin{equation}\label{eq:nocutlocus}\det\big(\IdM + t D \n_1(y)\big) = \det\left(\IdM + t D^2 \dist_1(y)\right) > c > 0\text{ for all }y \in \M_1\text{ and }|t|<\sigma,\end{equation}
which combined with \eqref{eq:Emembound2} implies
\begin{equation}\label{eq:Emembound3}\int_{\M_1} \int_{-\sigma}^{\sigma} \eta_\sigma(t) \left|\overline\Dtt\phi_{\sigma}\big(y + t \n_1(y)\big)\right|^p \dd t \dd \mathcal{H}^{d-1}(y) \ls C.\end{equation}
On the other hand, observing that \[\int_{-\sigma}^\sigma \eta_\sigma(t) \dd t = \int_{-1}^1 \eta(t) \dd t = 1,\] we can use Jensen's inequality for the measure $\eta_\sigma(t) \dd t$ and \eqref{eq:Emembound3} to obtain
\begin{align*}&\int_{\M_1} \left| \frac{1}{\sigma} \int_{-\sigma}^{\sigma} \eta\left(\frac{t}{\sigma}\right) \overline\Dtt\phi_{\sigma}\big(y + t \n_1(y)\big) \dd t \right|^p \dd \mathcal{H}^{d-1}(y)\\ 
&=\int_{\M_1} \left| \int_{-\sigma}^{\sigma} \eta_\sigma(t) \overline\Dtt\phi_{\sigma}\big(y + t \n_1(y)\big) \dd t \right|^p \dd \mathcal{H}^{d-1}(y)\\
&\ls \int_{\M_1} \int_{-\sigma}^{\sigma} \eta_\sigma(t) \left| \overline\Dtt\phi_{\sigma}\big(y + t \n_1(y)\big) \right|^p \dd t \dd \mathcal{H}^{d-1}(y) \ls C.
\end{align*}
Therefore, the sequence $u_\sigma \in L^p(\M_1; \R^{d \times d})$ of tangential derivatives averaged along normals given by 
\[u_\sigma(y) := \frac{1}{\sigma} \int_{-\sigma}^{\sigma} \eta\left(\frac{t}{\sigma}\right) \overline\Dtt\phi_{\sigma}\big(y + t \n_1(y)\big) \dd t\]
can be assumed, upon possibly taking another subsequence, to converge weakly to some limit in $L^p(\M_1; \R^{d \times d})$. To identify the limit, by density we may test this weak convergence with $F \in C^1(\M_1; \R^{d \times d})$. Using \eqref{eq:contgeom}, that $\phi$ is uniformly continuous and that $\phi_\sigma$ is bounded in $W^{1,p}(\Omega; \R^d)$, we obtain for some functions $h_1, h_2$ with $h_j(\sigma) \to 0$ as $\sigma \to 0$ that
\begin{align*}
& \int_{\M_1}\!F(y) : \left( \overline\Dtt \left[ \int_{-\sigma}^{\sigma} \eta_\sigma(t) \phi_{\sigma}\big(\cdot + t \n_1(\cdot)\big) \dd t \right]\!(y) -(\n_2 \circ \phi)(y) \otimes \n_1(y)\right) \dd \mathcal{H}^{d-1}(y)\\
&=\int_{\M_1}\!F(y) :(\P_2\circ \phi)(y)\, D\! \left[ \int_{-\sigma}^{\sigma} \eta_\sigma(t) \phi_{\sigma}\big(\cdot + t \n_1(\cdot)\big) \dd t \right]\!(y) \,\P_1(y) \dd \mathcal{H}^{d-1}(y)\\
& =\int_{\M_1}\!F(y) : \int_{-\sigma}^{\sigma} \eta_\sigma(t) (\P_2\circ \phi)(y)\Big[ D\phi_{\sigma}\big(y + t \n_1(y)\big)P_1(y)+tD\phi_\sigma\big(y+t\n_1(y)\big)D\n_1(y)\Big] \dd t \dd \mathcal{H}^{d-1}(y)\\
&=\int_{\M_1}\!F(y) : \int_{-\sigma}^{\sigma} \eta_\sigma(t) (\P_2\circ \phi)\big(y + t \n_1(y)\big)D\phi_{\sigma}\big(y + t \n_1(y)\big)\P_1\big(y + t \n_1(y)\big) \dd t \dd \mathcal{H}^{d-1}(y) + h_1(\sigma)\\
&=\int_{\M_1}\!F(y) :\left( \int_{-\sigma}^{\sigma} \eta_\sigma(t) \overline\Dtt\phi_{\sigma}\big(y + t \n_1(y)\big) \dd t -(\n_2 \circ \phi)(y) \otimes \n_1(y)\right) \dd \mathcal{H}^{d-1}(y)+h_2(\sigma),
\end{align*}
where the additional error $h_2-h_1$ accounts for the difference in the last term 
\[(\n_2 \circ \phi)\big(y+t\n_1(y)\big) \otimes \n_1\big(y+t\n_1(y)\big)-(\n_2 \circ \phi)(y) \otimes \n_1(y).\]
Noticing that $F:(\n_2 \otimes \n_1)=\n_2^\T F \n_1$, the above computation, $\int \eta_\sigma = 1$, that $\P_2^\T = \P_2$, and integrating by parts on $\M_1$ with Lemma \ref{lem:intbyparts} we get
\begin{align*}
&\int_{\M_1}\! F(y) \mkern1mu:\mkern1mu u_\sigma(y) \dd \mathcal{H}^{d-1}(y) -\int_{\M_1}\!(\n_2 \circ\phi)^\T(y) \,F(y)\, \n_1(y) \dd \mathcal{H}^{d-1}(y)\\
&\quad=  \int_{\M_1}\! F(y) : \left(\int_{-\sigma}^{\sigma} \eta_\sigma(t) \overline\Dtt\phi_{\sigma}\big(y + t \n_1(y)\big)\dd t-(\n_2 \circ \phi)(y) \otimes \n_1(y)\right) \dd \mathcal{H}^{d-1}(y)\\
&\quad=  \int_{\M_1}\! F(y) : \left(\overline\Dtt\left[ \int_{-\sigma}^{\sigma} \eta_\sigma(t) \phi_{\sigma}\big(\cdot + t \n_1(\cdot)\big) \dd t \right]\!(y) - (\n_2 \circ \phi)(y) \otimes \n_1(y) \right)\dd \mathcal{H}^{d-1}(y) - h_2(\sigma) \\
&\quad=  \int_{\M_1}\! (\P_2\circ \phi)(y) F(y) \mkern1mu:\mkern1mu \Dt\left[ \int_{-\sigma}^{\sigma} \eta_\sigma(t) \phi_{\sigma}\big(\cdot + t \n_1(\cdot)\big) \dd t \right]\!(y) \dd \mathcal{H}^{d-1}(y) - h_2(\sigma)\\
&\quad= - \sum_{i=1}^d \int_{\M_1}\! \Div_{\M_1}\!\Big(\big[(\P_2 \circ \phi)(y)F(y)\P_1(y)\big]_i\Big) \left[\int_{-\sigma}^{\sigma} \eta_\sigma(t) \phi_{\sigma}^i\big(y + t \n_1(y)\big) \dd t \right] \dd \mathcal{H}^{d-1}(y) - h_2(\sigma).
\end{align*}
Now, using the weak convergence $\phi_\sigma \wkto \phi$ in $W^{1,p}(\Omega; \R^d)$ combined with weak continuity \cite[Ex.~3.2]{DemDem12} of the trace map from $W^{1,p}(\Omega)$ onto $W^{1-\frac{1}{p}, p}(\M_1)$ we get that
\[\int_{-\sigma}^{\sigma} \eta_\sigma(t) \phi_{\sigma}^i\big(\cdot + t \n_1(\cdot)\big) \dd t \xrightharpoonup[\sigma\to 0]{} \phi^i(\cdot) \text{ in }W^{1-\frac{1}{p}, p}(\M_1),\]
so that using Lemma \ref{lem:intbyparts} again we end up with
\begin{align*}
&- \sum_{i=1}^d \int_{\M_1}\! \Div_{\M_1}\!\Big(\big[(\P_2 \circ \phi)(y)F(y)\P_1(y)\big]_i\Big) \left[\int_{-\sigma}^{\sigma} \eta_\sigma(t) \phi_{\sigma}^i\big(y + t \n_1(y)\big) \dd t \right] \dd \mathcal{H}^{d-1}(y) - h_2(\sigma)\\
&\quad\xrightarrow[\sigma \to 0]{} - \sum_{i=1}^d \int_{\M_1}\! \Div_{\M_1}\!\Big(\big[(\P_2 \circ \phi)(y)F(y)\P_1(y)\big]_i\Big)\;\phi^i(y) \dd \mathcal{H}^{d-1}(y)\\
&\quad= \int_{\M_1}\! \Big( \P_2 \circ \phi)(y) F(y) \Big)\,:\, \Dt\phi(y) \,\dd \mathcal{H}^{d-1}(y)\\
&\quad= \int_{\M_1}\! F(y) \mkern1mu:\mkern1mu \Big( \Dtt \phi(y) - (\n_2 \circ\phi)(y) \otimes \n_1(y)\Big) \dd \mathcal{H}^{d-1}(y)\\
&\quad= \int_{\M_1}\! F(y) \mkern1mu:\mkern1mu \Dtt \phi(y) \dd \mathcal{H}^{d-1}(y) - \int_{\M_1}(\n_2 \circ\phi)^\T(y) \,F(y)\, \n_1(y) \dd \mathcal{H}^{d-1}(y),
\end{align*}
whence we identify the weak limit of $u_\sigma$ and deduce that $\Dtt \phi \in L^p(\M_1)$, and therefore $\phi \in \mathcal{T}_p$.

Note that this step implies that the $\Gamma$-limit of $\mathcal{E}^\sigma$ equals $+\infty$ whenever $\phi \notin \mathcal{T}_p$: if we had $\Dtt \phi \notin L^p(\M_1)$, having any sequence $\phi_\sigma$ with $\phi_\sigma \wkto \phi$ in $W^{1,p}(\Omega;\R^d)$ and $\liminf_{\sigma \to 0}\mathcal{E}^\sigma[\phi_\sigma] < +\infty$ would be a contradiction with the above.

\bigskip
\noindent \emph{Step 2: $\liminf$ inequality.}
\medskip

We perform a localization procedure analogous to the one in \cite[Lem.~4.1]{IglRumSch18}, fixing the coefficients to those corresponding to the limit deformation, and then taking into account that all the functions involved in the coefficients are uniformly continuous. As remarked above, we do not take into account the bending-like energy $\mathcal{E}_\bend$, since the proof for it is completely analogous to that for the membrane energy $\mathcal{E}_\mem$.

Let $\phi \in \mathcal{T}_p$ and $\phi_\sigma \to \phi$ in $L^p(\Omega;\R^d)$. Possibly by taking a subsequence that does not alter $\liminf \mathcal{E}^\sigma[\phi_\sigma]$ we may assume that $\mathcal{E}^\sigma[\phi_\sigma] \ls C$, since otherwise there is nothing to prove. As in the previous step, using the coercivity of $\mathcal{E}^\sigma$ we may take another subsequence so that $\phi_\sigma \wkto \phi$ weakly in $W^{1,p}(\Omega;\R^d)$ and also uniformly. By definition $\mathcal{E}_\match[\phi_\sigma] \gs 0$, so clearly
\[0 \ls \liminf_{\sigma \to 0} \mathcal{E}_\match[\phi_\sigma].\]
For the volume term, it is enough to notice that $W$ is polyconvex and $\phi^\sigma \wkto \phi$ in $W^{1,p}(\Omega; \R^d)$, so by a standard lower semicontinuity theorem \cite[Theorem 8.16]{Dac08} we have
\[\mathcal{E}_{\vol}^\sigma[\phi] = \int_\Omega W(D\phi(x)) \dd x \ls \liminf_{\sigma \to 0} \int_\Omega W(D\phi_\sigma(x)) \dd x = \liminf_{\sigma \to 0} \mathcal{E}_{\vol}^\sigma[\phi_\sigma].\]

For the membrane term, as in the previous step we may assume \eqref{eq:contgeom} and replace $\n_2 \circ \phi_\sigma, \P_2 \circ \phi_\sigma$ by $\n_2 \circ \phi, \P_2 \circ \phi$ with vanishing error \eqref{eq:approxBound} in the energy. Next, we need to take care of the spatial dependency of the coefficients. To do this, we split $\N_\sigma \M_1$ in small subdomains on each of which the coefficients will be replaced with fixed ones. In this case we choose the subdomains to be of cylindrical shape (i.e. of constant height along a fixed vector), to then apply the results of \cite{LedRao95}. For this, given a small parameter $\delta >0$, define a collection of $N_\delta$ subsets $O_i^\delta \subset \M_1$, relatively open in $\M_1$ with
\[O_i^\delta \cap O_j^\delta = \emptyset,\;\text{diam}(O_i^\delta)<\delta, \text{ and }\M_1 \mathbin{\big\backslash} \bigcup_{i=1}^{N_\delta}O_i^\delta \text{ of zero }\mathcal{H}^{d-1}\text{ measure}.\]
We then choose for each $O_i^\delta$ a single point $x_i^\delta \in O_i^\delta$ such that $O_i^\delta$ may be written as a graph in direction $\n_1(x_i^\delta)$: since $\M_1$ is $C^2$, for small enough $\delta$ this is possible for all $i=1,\ldots,N_\delta$ simultaneously. We denote by $K_i^{\delta,\sigma}$ the neighborhood of width $2\sigma$ in the direction $\n_1(x_i^\delta)$ associated to each of the $O_i^\delta$, that is
\begin{equation}\label{eq:cylnbhd}K_i^{\delta,\sigma} := \left\{ y + t \n_1(x_i^\delta) \middle|\, y \in O_i^\delta,\;t\in\left(-\sigma, \sigma\right)\right\},\end{equation}
for which assuming $\sigma < \delta/2$ we have $\text{diam}(K_i^{\delta, \sigma}) \ls 2\delta$. 
We aim then to replace $\mathcal{E}_{\mem}[\phi_\sigma]$ by the sum over $i=1,\ldots,N_\delta$ of the integrals
\begin{equation}\label{eq:localizedEnergy}I_i^{\delta, \sigma}[\phi_\sigma] := \int_{K_i^{\delta,\sigma}}\eta_\sigma\big(t(x)\big)W\Big(\P_2\big(\phi(x_i^\delta)\big) D\phi_\sigma(x) \P_1(x_i^\delta) + \n_2\big(\phi(x_i^\delta)\big) \otimes n_1(x_i^\delta) \Big)\, \dd x,\end{equation}
where $t(x)$ is determined from \eqref{eq:cylnbhd}. The total error we commit when doing this replacement can be bounded by
\begin{equation}\label{eq:localizationError}C\big(\omega(\delta)^p+\omega(\delta)+\sigma\big).\end{equation}
Here, $\omega(\delta)$ is a modulus of continuity valid on $\mathcal{N}_\sigma \M_1$ for $\n_1, D^2\dist_1$ and the compositions of all the $\phi_\sigma$ with $\n_2$ and $D^2 \dist_2$, which exists because these converge uniformly and we have assumed \eqref{eq:contgeom}. The first term is derived using \eqref{eq:Wcont} analogously to \eqref{eq:approxBound} and reflects the error in the coefficients of $I_i^{\delta, \sigma}$. The second arises from the use of $\eta_\sigma(t(x))$ instead of $\eta_\sigma(\dist_1(x))$, since $\omega(\delta)$ is also a modulus of continuity for the curvature of $\M_1$. The third term accounts for the difference in the domains of integration and overlaps that arise because of the curvature of $O_i^\delta$, that is, the sets
\[\bigcup_{i \neq j} K_i^{\delta,\sigma} \cap K_j^{\delta,\sigma} \text{ and } \big(\N_\sigma \M_1\big) \Delta \bigg(\bigcup_{i}K_i^{\delta, \sigma}\bigg)\]
whose total measure is bounded by $C\sigma^2$, with effect magnified by a factor $\sigma^{-1}$ since $\eta_\sigma(\cdot) = \sigma^{-1} \eta(\cdot / \sigma)$. For all terms, \eqref{eq:Wbound} and the bound $\|D\phi_\sigma\|_{L^p(\Omega)}\ls C$ have been used.

Now for each $I_i^{\delta, \sigma}$, denoting by $Q_i(x)=Q(\n_i(x))$, defined as in Section \ref{subsec:notation} so that $Q_i(x)e_d=\n_i(x)$, and $\P(e_d)=\IdM - e_d \otimes e_d$ we notice that for any $A \in \R^{d \times d}$, we have by the symmetries of $W$ that
\begin{equation}\label{eq:rotateinside}\begin{aligned}&W\left( \P_2\big(\phi(x_i^\delta)\big) \,A\, \P_1(x_i^\delta) + \n_2\big(\phi(x_i^\delta)\big) \otimes \n_1(x_i^\delta) \right)\\
&= W\left( Q_2\big(\phi(x_i^\delta)\big)^\T \Big[\P_2\big(\phi(x_i^\delta)\big) \,A\, \P_1(x_i^\delta) + \n_2\big(\phi(x_i^\delta)\big) \otimes \n_1(x_i^\delta)\Big] Q_1(x_i^\delta)\right)\\
&= W\left(\P(e_d) \,A_{x_i^\delta}\, \P(e_d) + e_d \otimes e_d \right),\\
\end{aligned}\end{equation}
where $A_{x_i^\delta} := Q_2\big(\phi(x_i^\delta)\big) A\, Q_1(x_i^\delta)^T$.
Since $Q_2\big(\phi(x_i^\delta)\big)$ and $Q_1(x_i^\delta)^T$ are fixed matrices, they commute with differentiation, so that we can absorb the coordinate change and equivalently consider the sequence of deformations $x \mapsto Q_2\big(\phi(x_i^\delta)\big)\, \phi_\sigma\left(Q_1(x_i^\delta)^T x\right)$. 

After these transformations and since $\int \eta_\sigma = 1$ for all $\sigma$, we are in a position to apply the nonlinear membrane limit for plates of \cite[Thm.~2]{LedRao95}. Although $O_i^\delta$ is not flat, after fixing the coefficients and working in the cylindrical neighborhoods $K_i^{\delta, \sigma}$ defined in \eqref{eq:cylnbhd}, the constant vector $\n_1(x_i^\delta)$ plays the role of the vertical direction along which the rescaling of the membrane limit happens (see also the similar geometric situation considered in \cite[Sec.~2, Prop.~5]{BesKraMic09}). In this situation, one rescales $I_i^{\delta, \sigma}$ to the unit-height neighborhood
\begin{equation}\label{eq:stretchednbhd}\widehat{K}_i^\delta := \left\{ y + t \n_1(x_i^\delta) \middle|\, y \in O_i^\delta,\;t\in\left(-1,1\right)\right\} 
\text{ through }K_i^{\delta,\sigma} \ni y + t \n_1(x_i^\delta) \mapsto y + \frac{t}{\sigma} \n_1(x_i^\delta),\end{equation}
and notices that since $\|D\phi_\sigma\|_{L^p(\Omega)}\ls C$, the corresponding rescalings of $\restr{\phi_\sigma}{K_i^{\delta,\sigma}}$ satisfy
\begin{equation}\label{eq:stretchedconv}\begin{gathered}\widehat{\phi}_\sigma \in W^{1,p}\big(\widehat{K}_i^\delta; \R^d\big)\text{ and }\int_{\widehat{K}_i^\delta} \big|D\widehat{\phi}_\sigma(x) \,\n_1(x_i^\delta)\big|^p \dd x=\sigma^{p-1}\int_{K_i^{\delta, \sigma}} \big|D\phi_\sigma(x) \,\n_1(x_i^\delta)\big|^p \dd x, \text{ so that }\\D\widehat{\phi}_\sigma \,\n_1(x_i^\delta)\to 0 \text{ strongly in }L^p\big(\widehat{K}_i^\delta\big).\end{gathered}\end{equation}
The resulting $\Gamma$-limit has as integrand the transformation through \eqref{eq:rotateinside} of the quasiconvex envelope \cite[Sec.~6.1, Thm.~6.9]{Dac08} $QW^{x_i^\delta}$ of the density $W^{x_i^\delta}$ defined by
\begin{equation}\label{eq:membraneDensity}\begin{aligned}W^{x_i^\delta}(B):&=\inf_{\xi \in \R^3}W\left( \P(e_d) \big[ B^1 B^2 \ldots B^{d-1} \,\big|\, \xi\,\big] \P(e_d) + e_d \otimes e_d \right)\\
&=W\left( \P(e_d) \big[ B^1 B^2 \ldots B^{d-1} \,\big|\, 0\,\big] \P(e_d) + e_d \otimes e_d \right) = W\big( \P(e_d) B\, \P(e_d) + e_d \otimes e_d \big)\end{aligned}\end{equation}
applied at $B=A_{x_i^\delta}$. Here, $\big[ B^1 B^2 \ldots B^{d-1} \,\big|\, \xi\,\big]$ denotes the matrix obtained by replacing the last column of $B$ with $\xi$, and the infimum is trivial since the rightmost projection $\P(e_d)$ ensures that there is no dependence on $\xi$. The right hand side of \eqref{eq:membraneDensity} is polyconvex by Lemma \ref{lem:polyconvexity}, hence also quasiconvex \cite[Thm.~5.3]{Dac08} and therefore $QW^{x_i^\delta} = W^{x_i^\delta}$. In consequence, taking into account \eqref{eq:rotateinside} and \eqref{eq:membraneDensity} we have
\begin{equation}\label{eq:localizedLiminf}\widehat{I}_i^\alpha[\,\widehat{\phi}\,\big] := \int_{\widehat{K}_i^\delta} W\Big(\P_2\big(\phi(x_i^\delta)\big) D\widehat{\phi}(x) \P_1(x_i^\delta) + \n_2\big(\phi(x_i^\delta)\big) \otimes \n_1(x_i^\delta) \Big) \dd x \ls \liminf_{\sigma \to 0} I_i^{\delta, \sigma}[\phi_\sigma].\end{equation}
Here, the left hand side contains the extension $\widehat{\phi}$ of $\restr{\phi}{O_i^\delta}$ to $\widehat{K}_i^\delta$ defined by $\widehat{\phi}(x)=\phi(y)$ if $x\in \widehat{K}_i^\delta$ and $y$ is as in \eqref{eq:stretchednbhd}. 
Once again using \eqref{eq:Wcont} analogously to \eqref{eq:approxBound}, using $\sigma < \delta/2$, and since $\widehat{\phi}$ is constant in the direction $\n_1(x_i^\delta)$, we can estimate
\begin{equation}\label{eq:localizedFixingEst}\left| \widehat{I}_i^\alpha\big[\,\widehat{\phi}\,\big] - \int_{\widehat{K}_i^\delta} W\Big(\P_2\big(\widehat{\phi}(x)\big) D\widehat{\phi}(x) \P_1\big(y(x)\big) + \n_2\big(\widehat{\phi}(x)\big) \otimes \n_1\big(y(x)\big) \Big) \dd x \right| \ls C\omega(\delta)^p\mathcal{H}^{d-1}(O_i^\delta),\end{equation}
where $y(x)$ is the projection along $\n_1(x_i^\delta)$ onto $O_i^\delta$ as in the definition of $\widehat{K}_i^\delta$ in \eqref{eq:stretchednbhd}. Moreover, again because $\widehat{\phi}$ is constant in the direction $\n_1(x_i^\delta)$ and since $\phi \in \mathcal{T}_p$ we also have that
\[\int_{\widehat{K}_i^\delta} W\Big(\P_2\big(\widehat{\phi}(x)\big) D\widehat{\phi}(x) \P_1\big(y(x)\big) + \n_2\big(\widehat{\phi}(x)\big) \otimes \n_1\big(y(x)\big) \Big) \dd x = \int_{O_i^\delta} W\big(\Dtt \phi(y)\big)\dd \mathcal{H}^{d-1}(y),\]
so that summing over $i=1,\ldots,N_\delta$ and letting $\delta \to 0$, we conclude.
\bigskip

\noindent \emph{Step 3: $\limsup$ inequality.}
\medskip

Let $\phi \in \mathcal{T}_p$. We show that there exists a recovery sequence $\phi_{\sigma}$ for $\mathcal{E}^0$ at $\phi$, such that in addition we have
\begin{equation}\label{eq:perfmatch}\int_\Omega \eta_{\sigma} \circ \dist_1 |\dist_2\circ \phi_{\sigma} - \dist_1|^2=0.\end{equation}

Assume that $2\sigma < (\sup_{x \in \M^1}|D^2\dist_1(x)|)^{-1}$, so that as in the proof of Lemma \ref{lem:distest} each $x \in \mathcal N_{2\sigma} \M_1$ can be written uniquely as $x = y + t \n_1(y)$ with $y \in \M_1$, and denote the projection of x onto $\M_1$ by $\pi_{\M_1}(x):=y$. We then define the modified deformations $\phi_\sigma$ by 
\begin{equation}\label{eq:modphi}
\phi_\sigma(y+t\n_1(y)) = \tau_\sigma(t) \Big(\phi(y)+t\n_2\big(\phi(y)\big)-\phi\big(y+t\n_1(y)\big)\Big)+\phi\big(y+t\n_1(y)\big),\end{equation}
whenever $x \in \mathcal N_{2\sigma} \M_1$ and $\phi_\sigma(x)=\phi(x)$ otherwise. Here $\tau_\sigma: \R \to \R$ is nonincreasing and such that
\begin{equation}\label{eq:proptau}\tau_\sigma \gs 0,\;\tau_\sigma(t)=1 \text{ for } |t| \ls \sigma,\;\tau_\sigma(t)=0 \text{ for }|t| \gs 2\sigma, \text{ and } \left|\frac{\dd \tau_\sigma}{\dd t} \right|\ls \frac{2}{\sigma}.\end{equation}
Moreover, as done in the previous steps and using estimates analogous to those of Lemma \ref{lem:distest}, we consider only $\sigma$ is small enough for which $\dist_1 \in C^2(\N_{2\delta}\M_1)$ and $\dist_2 \in C^2\big(\phi(\N_{2\delta}\M_1)\big)$.

We aim then to show that $\phi_\sigma$ is a recovery sequence, that is
\[\limsup_{\sigma \to 0}\mathcal{E}^{\sigma}[\phi_\sigma]\ls\mathcal{E}^0[\phi].\]
First, notice that whenever $t \ls \sigma$ we have  
\[\phi_\sigma\big(y+t\n_1(y)\big) = \phi(y)+t\n_2\big(\phi(y)\big),\]
so \eqref{eq:perfmatch} is satisfied. Moreover, this also implies that 
\[\lim_{\sigma \to 0}\mathcal{E}_{\mem}^{\sigma}[\phi_\sigma]=\int_{\M_1} W\big(\Dtt\phi(y)\big) \dd \mathcal{H}^{d-1}(y).\]
To see this it suffices to notice, using the continuity hypothesis \eqref{eq:Wcont}, \eqref{eq:contgeom} and $\phi \in \mathcal{T}_p$, that 
\begin{align*}
&\left|\int_\Omega (\eta_{\sigma} \circ \dist_1) W\big(\Dtt\phi_\sigma(x)\big) \dd x - \int_{\M_1} W\big(\Dtt\phi(y)\big) \dd \mathcal{H}^{d-1}(y) \right| \\
&\enskip = \Bigg|\int_{-\sigma}^{\sigma} \eta_\sigma(t) \int_{\M_1}  W\left(\Dtt\phi_\sigma\big(y + t \n_1(y)\big)\right)\left|\det\left(\IdM + t D^2 \dist_1(y)\right)\right|\dd \mathcal{H}^{d-1}(y) \dd t \\
&\qquad - \int_{-\sigma}^{\sigma} \eta_\sigma(t) \dd t \int_{\M_1} W\big(\Dtt\phi(y)\big) \dd \mathcal{H}^{d-1}(y) \Bigg| \\
&\enskip \ls \sup_{z \in \M_1, |t|\ls \sigma}\bigg|1-\Big| \det\big( \IdM + tD^2 \dist_1(z)\big) \Big|\bigg|\left( \, \int_\Omega (\eta_{\sigma} \circ \dist_1) W\big(\Dtt\phi_\sigma(x)\big) \dd x \,\right)\\
&\qquad + \int_{-\sigma}^{\sigma} \eta_\sigma(t) \int_{\M_1} \Big|W\left(\Dtt\phi_\sigma\big(y + t \n_1(y)\big)\right) - W\big(\Dtt\phi(y)\big)\Big|\dd \mathcal{H}^{d-1}(y) \dd t \\
&\enskip \ls C\sigma^{d-1} + \int_{-\sigma}^{\sigma} \eta_\sigma(t) \int_{\M_1} \Big|W\left(\Dtt\big(\phi + t \n_2\circ\phi)(y)\right) - W\big(\Dtt\phi(y)\big)\Big|\dd \mathcal{H}^{d-1}(y) \dd t \\
&\enskip \ls C\sigma^{d-1} + C\int_{-\sigma}^{\sigma} t\,\eta_\sigma(t) \int_{\M_1} \big|\Dtt(\n_2\circ\phi)(y)\big|\big(1+|\Dtt\phi(y)|^{p-1}\big)\dd \mathcal{H}^{d-1}(y) \dd t \\
&\enskip \ls C\sigma^{d-1} + C\sigma^2\left(1+\|\Dtt\phi\|^{p-1}_{L^p(\M_1)}\right)\ls C(\sigma^{d-1} + \sigma^2)\xrightarrow[\sigma\to 0]{} 0.
\end{align*}

When considering the volume term, the transition layer in $\tau_\sigma$ between $\sigma$ and $2\sigma$ plays a role. We can estimate using the definition of $\phi_\sigma$ in \eqref{eq:modphi}, assumption \eqref{eq:Wbound}, that $\phi_\sigma \in W^{1,p}(\N_\sigma \M_1; \R^d)$ since in that subdomain it is the constant extension along the normal $\n_1$ of the trace $\restr{\phi}{\M_1}\in W^{1-\frac{1}{p}, p}(\M_1;\R^d)$, and the properties of $\tau_\sigma$ in \eqref{eq:proptau} to obtain
\begin{align}
\mathcal{E}_{\vol}[\phi_\sigma]&=\int_\Omega W\big(D\phi_\sigma(x)\big) \dd x=\int_{\left\{ |\dist_1| \gs 2\sigma \right\}}+\int_{\left\{ |\dist_1| \ls \sigma \right\}}+\int_{\left\{ \sigma < |\dist_1| < 2\sigma \right\}}\\
&\ls \mathcal{E}_{\vol}[\phi] + \int_{\N_\sigma \M_1} |D\phi_\sigma(x)|^p \dd x + \int_{\left\{ \sigma< |\dist_1| < 2\sigma \right\}} |D\phi_\sigma(x)|^p \dd x \\
&\ls \mathcal{E}_{\vol}[\phi] + C\sigma + \int_{\left\{ \sigma< |\dist_1| < 2\sigma \right\}} |D\phi_\sigma(x)|^p \dd x \\
&\ls \mathcal{E}_{\vol}[\phi] +C\sigma + C\int_{\left\{ \sigma< |\dist_1| < 2\sigma \right\}}|D\phi(x)|^p \dd x\\\label{eq:layerestimate}
&\qquad\qquad+C\sigma^{-p}\int_{\left\{ \sigma< |\dist_1| < 2\sigma \right\}} \dist_1(x)^p\, \Big|\,\n_2\Big(\phi\big(\pi_{\M_ 1}(x)\big)\Big)\Big|^p \dd x\\
&\qquad\qquad+ C\sigma^{-p}\int_{\left\{ \sigma< |\dist_1| < 2\sigma \right\}}\Big|\phi\big(\pi_{\M_ 1}(x)\big)-\phi(x)\Big|^p \dd x,
\end{align}
where for the last inequality the product rule for $\tau_\sigma$ and $\phi$ was used, and also that whenever $x=\pi_{\M_ 1}(x)+t\n_1(\pi_{\M_ 1}(x))$, then $t=\big(x - \pi_{\M_ 1}(x)\big)\cdot \n_1\big(\pi_{\M_ 1}(x)\big)=\dist_1(x)$. This implies that the penultimate term of \eqref{eq:layerestimate} tends to zero, since
\[\int_{\left\{ \sigma< |\dist_1| < 2\sigma \right\}} \dist_1(x)^p \,\Big|\,\n_2\Big(\phi\big(\pi_{\M_ 1}(x)\big)\Big)\Big|^p \dd x = \int_{\left\{ \sigma< |\dist_1| < 2\sigma \right\}} \dist_1(x)^p \dd x \ls C \sigma^{p+1}\]
For the last term of \eqref{eq:layerestimate}, noticing that the integrand vanishes at $\M_1$ we use a Poincar\'e inequality for the derivative in the normal direction $\n_1$ on the sets $\left\{ -2\sigma < \dist_1 < 0 \right\}$ and $\left\{ 0 < \dist_1 < 2\sigma \right\}$ (these sets have at least $C^{1,1}$ boundaries since $\sigma$ was chosen small enough, see \cite[Thm.~7.7.1(i)]{DelZol11}) and with optimal constant $C\sigma^p$ to write
\[\begin{aligned}
\sigma^{-p}\int_{\left\{ \sigma < |\dist_1| < 2\sigma \right\}}\Big|\phi\big(\pi_{\M_ 1}(x)\big)-\phi(x)\Big|^p \dd x &\ls \sigma^{-p}\int_{\left\{ 0 < |\dist_1| < 2\sigma \right\}}\Big|\phi\big(\pi_{\M_ 1}(x)\big)-\phi(x)\Big|^p \dd x \\
&\ls C \left\|D\phi\,\big(\n_1 \circ \pi_{\M_ 1}\big)\right\|^p_{L^p(\left\{ 0< |\dist_1| < 2\sigma \right\})} \xrightarrow[\sigma\to 0]{} 0,
\end{aligned}\]
and finally obtain \[\limsup_{\sigma \to 0} \mathcal{E}_{\vol}[\phi_\sigma] \ls \mathcal{E}_{\vol}[\phi].\]

As a remark, let us note that on the one hand the same computations above allow us to prove that \[\|D\phi_\sigma\|_{L^p(\Omega)} - \|D\phi\|_{L^p(\Omega)} \xrightarrow[\sigma\to 0]{} 0,\] while on the other hand $\phi_\sigma$ differs from $\phi$ only on $\N_{2\sigma}\M_1$ while $|\N_{2\sigma}\M_1| \to 0$. From this, up to possibly taking a further subsequence, we conclude that $\phi_\sigma$ converges to $\phi$ not just weakly but also strongly in $W^{1,p}(\Omega; \R^d)$.

\medskip
\noindent \emph{Step 4: Convergence of minimizers.} 
\medskip

As above, since $\theta = 0$ and $\phi \in W^{1,p}_0(\Omega; \R^d)+\Id$, we have
\[\mathcal{E}^\sigma[\phi] \gs C \int_\Omega |D\phi|^p \gs C\left(\|\phi_\sigma\|^p_{W^{1,p}(\Omega)}-1\right),\]
with $C$ independent of $\sigma$. Hence, by the Banach-Alaoglu theorem the $\mathcal{E}^\sigma$ form an equicoercive family of $\Gamma$-converging functionals, which implies \cite[Theorem 1.21]{Bra02} that any sequence $\{\phi^\sigma\}$ of minimizers in $W^{1,p}_0(\Omega; \R^d)+\Id$ of $\mathcal{E}^\sigma$ has a subsequence converging weakly in $W^{1,p}(\Omega; \R^d)$ to a minimizer of $\mathcal{E}^0$. Existence of such minimizers for $\mathcal{E}^\sigma$ can be proved by analogous methods as those used in Theorem \ref{thm:existence}, where Lemma \ref{lem:distest} is modified as in the first step. 
\end{proof}

\begin{rem}[Natural boundary conditions] 
In contrast to the situation in Theorem \ref{thm:existence} where we use global topological properties that are in general only true with fixed Dirichlet boundary, the restriction to $W^{1,p}_0(\Omega; \R^d)+\Id$ in the definition of $\mathcal{E}^\sigma$ and Theorem \ref{thm:gammaconv} is not essential. For the analogue with zero Neumann boundary conditions, the only difference is that $\mathcal{E}^\sigma$ needs to be coercive in $W^{1,p}(\Omega; \R^d)$ as well; this is proved in \cite[Cor.~4.3]{IglRumSch18} using $W(A)\gs C|A|^p$ and the form of $\mathcal{E}_{\match}^\sigma$.
\end{rem}

\begin{rem}[The case $\theta =1$]
From a modelling perspective, the desired scaling for our model is one in which the influence of the volume term vanishes, which is the case when $\theta=1$, a regime in which we could also prove existence of minimizers for all $\sigma >0$ even for the symmetric energy. In that case, the volume energy still determines the values of minimizers outside the narrow band, since it is the only active term there. The same proof of Theorem \ref{thm:gammaconv} shows that in case $\theta = 1$, the functionals $\mathcal{E}^\sigma$ restricted to a set of bounded norm (e.g. $\{\phi \in W^{1,p}_0(\Omega; \R^d)+\Id \,|\, \|D\phi\|_{L^p(\Omega)} \ls C\}$) also $\Gamma$-converge with respect to either $L^p(\Omega; \R^d)$ convergence or the weak $W^{1,p}(\Omega; \R^d)$ topology (which is metrizable on bounded sets) to the surface energy 
\[\int_{\M_1}W( \Dtt\phi )+W\big( \Lambda[D\phi, \S_1, \S_2 \circ \phi, \n_1, \n_2 \circ \phi] \big)\dd \mathcal{H}^{d-1}.\]
This constraint cannot be removed: without it \eqref{eq:stretchedconv} is not guaranteed, since $W(\Dtt \phi)$ is not coercive with respect to derivatives in the normal direction. 
\end{rem}

\begin{rem}[Other choices of membrane energy]
Had we chosen to use for the surface deformation energy (instead of the energies based on the projected derivative $\Dtt \phi$\,) a ``hardened'' but isotropic term depending on the full derivative, of the type
\begin{equation}\label{eq:hardenedEnergy}\int_\Omega \eta_\sigma(\dist_1(x))W\big(D\phi(x)\big)\,\dd x,\end{equation}
we would obtain a $\Gamma$-limit with an integral representation through a density that contains a nontrivial quasiconvexification, and vanishes for matrices whose singular values are less than or equal to $1$ \cite[Theorem 10]{LedRao95}. In consequence, sequences of minimizers of the analogue of $\mathcal{E}^\sigma$ with $\mathcal{E}_\mem^\sigma$ replaced by \eqref{eq:hardenedEnergy} may develop oscillations as $\sigma \to 0$, and the limit functional would not penalize compression of $\M_1$. In contrast, Theorem \ref{thm:gammaconv} (as reflected in \eqref{eq:membraneDensity}, in particular) shows that our projected tangential derivative construction is preserved in the membrane limit, avoiding these drawbacks.

Furthermore, in \cite[Sec.~4.1, Fig.~5]{IglRumSch18} it was demonstrated that using a tangential strain tensor through
\[\int_\Omega \eta_\sigma(\dist_1(x))W\Big(\big[D\phi(x)\P_1(x)\big]^\T\big[D\phi(x)\P_1(x)\big]+\n_1(x) \otimes \n_1(x)\Big)\,\dd x\]
is also not desirable, since this term is not lower semicontinuous and again encourages oscillations in minimizing sequences, even at fixed $\sigma >0$.
\end{rem}

\begin{rem}[Symmetric energies]\label{rem:whynonsym}
A series of papers by O. Anza Hafsa and J.-P. Mandallena (see the overview \cite{AnzMan12} and references therein) tackle the membrane limit with non-interpenetration and orientation preservation conditions. It would be tempting to think of applying this kind of results to attempt to take the limit of the symmetric energies. However in our framework, the surface energies should not enforce orientation preservation since $\det(\Dtt \phi)$ could be negative depending on the relative position of $\M_2$ and $\phi \circ \M_1$, as remarked in Section \ref{subsec:tangential}.

The obstruction for proving Theorem \ref{thm:gammaconv} for the symmetric energies $E^\sigma$ is rather the blending argument with the cutoff function $\tau_\sigma$ used to construct a recovery sequence. What would be needed is a result on approximation of Sobolev homeomorphisms by diffeomorphisms, done in such a way that the corresponding energies converge. Notice that since the energy density is unbounded as the determinant vanishes, this property is not guaranteed by strong convergence. Alternatively, a proof by density is also possible, and a sufficient condition would be approximation by smoother functions with convergence in $L^p$ norm for the derivatives of the inverse transformation, as obtained for planar bi-Lipschitz maps in \cite{DanPra14}. At the time of writing, the existence of such an approximation procedure seems to be an open problem both for planar maps in $W^{1,p}$, $1 < p < +\infty$, and for all three-dimensional cases (\cite{DanPra14}, \cite[Questions 3 and 4]{IwaKovOnn11}, \cite[Open problem 16]{HenKos14}). As noted in \cite{Bal10}, such a result would have deep implications for the mathematical theory of elasticity.
\end{rem}
\section{Computational results for symmetric energies}\label{sec:numerics}
\subsection{Numerical setup}

As in \cite{IglRumSch18}, we have used a `discretize, then optimize' strategy on adaptive hierarchichal quadtree or octree grids defined on $\Omega = (0,1)^d$ with $d=2,3$, coupled with a multiscale first order descent, implemented in the in the Quocmesh library \cite{Quoc}. This means that the solution at one grid, computed through a conjugate gradient method computed with a weighted $H^1$ metric  coupled with Armijo line search, is interpolated into the next finer one and used as an initial condition to continue the descent on the new grid.

\begin{figure*}[ht]
  \centering
  \mbox{}
  \includegraphics[width=.21\linewidth]{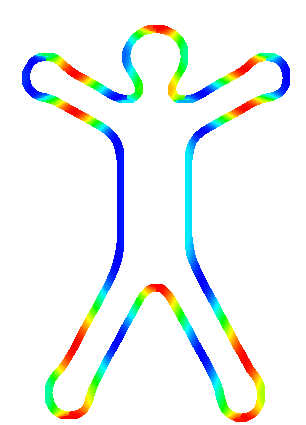}
  \hfill
  \includegraphics[width=.21\linewidth]{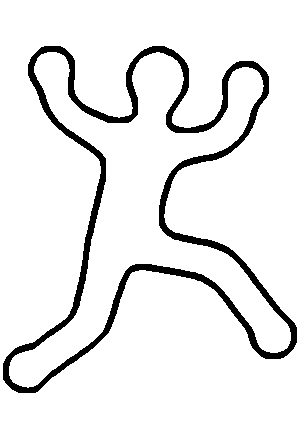}
  \hfill
  \includegraphics[width=.21\linewidth]{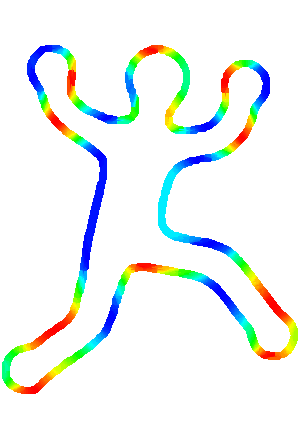}
    \hfill
  \includegraphics[width=.21\linewidth]{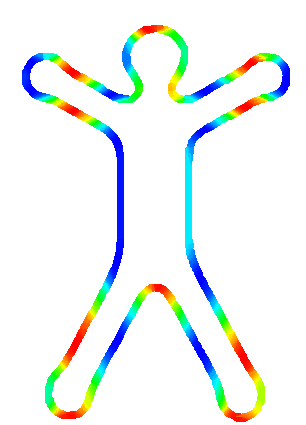}
  \hfill \\
  \mbox{} \hfill
  \includegraphics[width=.32\linewidth]{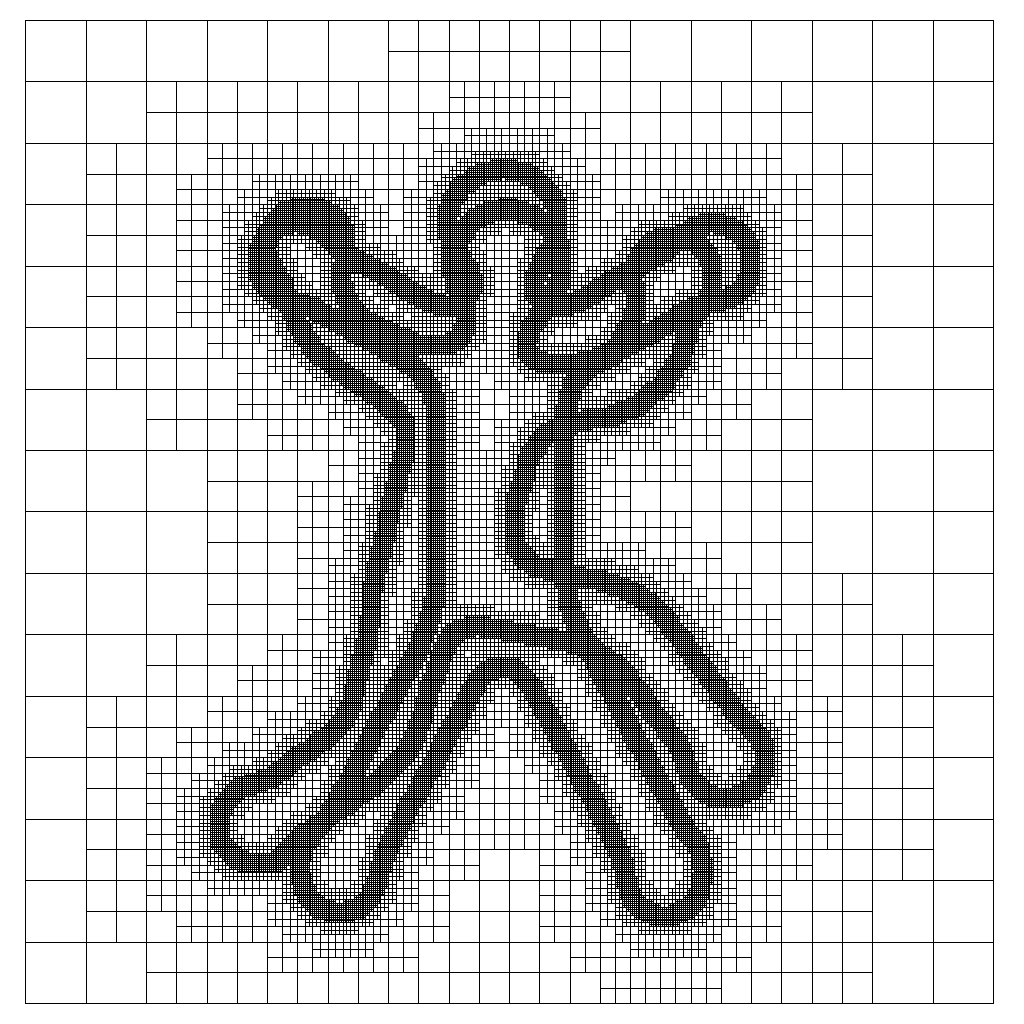}
  \hfill
  \includegraphics[width=.32\linewidth]{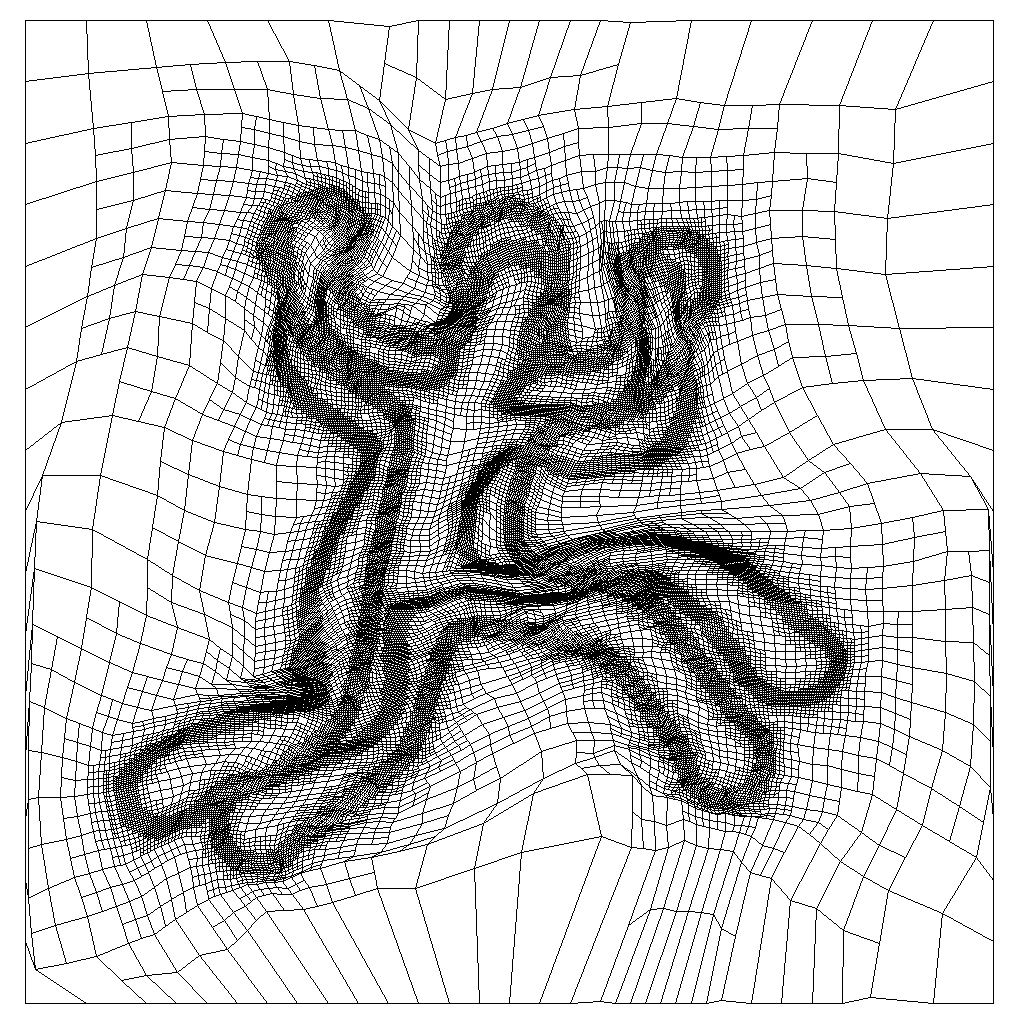}
  \hfill
  \includegraphics[width=.32\linewidth]{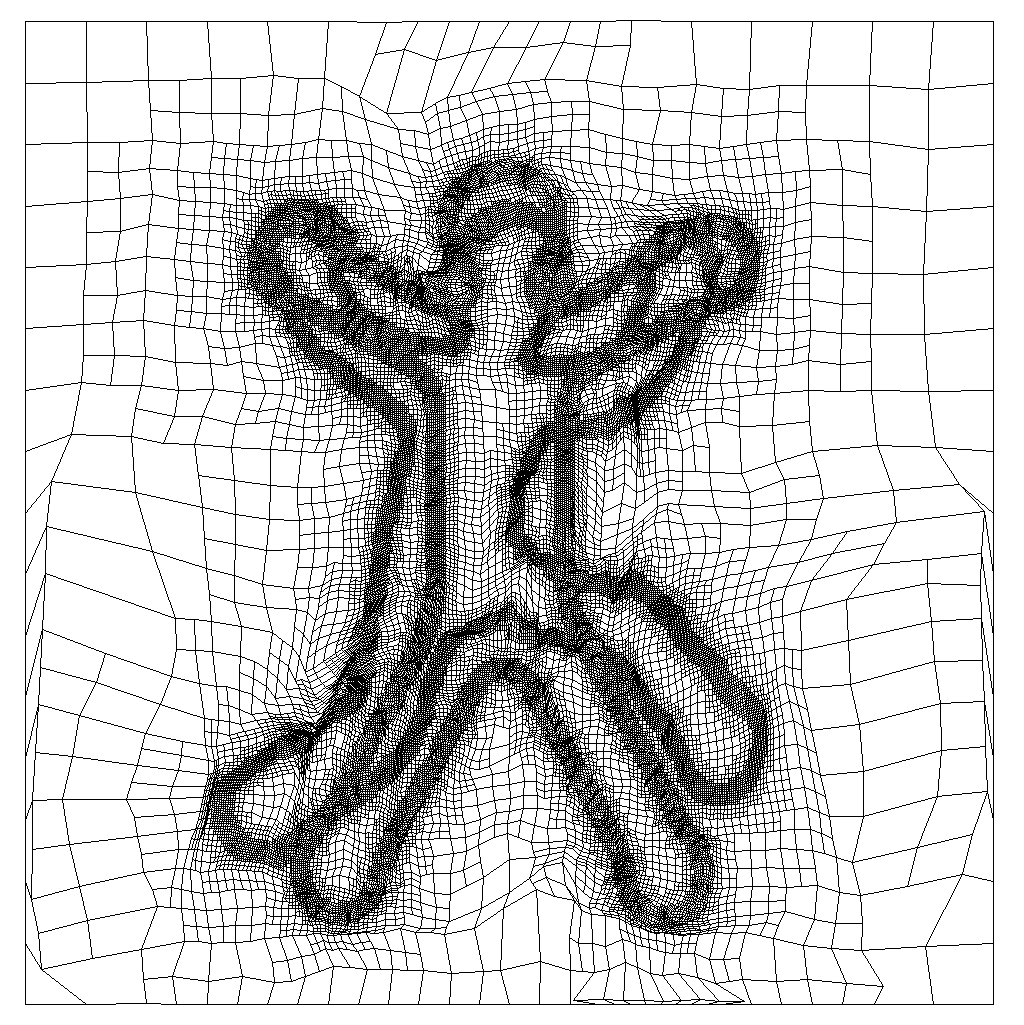}
  \hfill \mbox{}  
  \mbox{}
  \caption{\label{fig:jump}Upper row left to right: Jump shape template $\M_1$ with visualization pattern and target $\M_2$, deformed shape $\phi(\M_1)$ after level $9$ matching with the symmetric energy $E^\sigma$ in \eqref{eq:Esym}, deformed shape $\psi \circ \phi(\M_1)$ after subsequently applying level $9$ of the matching with switched data. Lower row: quadtree grid used with $h_{\min}=2^{-9}$, after applying the direct matching $\phi$, and after applying both matchings through $\psi \circ \phi$.\\ Being able to perfectly numerically realize the symmetry would result in identical leftmost and rightmost images. Although the first and last colored shapes look quite similar, some differences can be seen. For example the red patch on top of the head shifts slightly to the left, an error which can also be easily spotted in the rightmost deformed grid.}
\end{figure*}

The grids are refined around the input shapes $\M_1$ and $\M_2$, to add detail to the main area of interest and maintain accuracy in the coefficients depending on the initial and deformed configuration respectively. The hierarchical structure of the grids allows to search them efficiently (further details are given in Sec. 5 of \cite{IglRumSch18}), which is crucial in our case since the coefficients strongly depend on the deformed configuration. Below, when speaking about these grids, we refer to them as having level $\ell$ when the side of the finest elements present in it is $h_{\ell} = 2^{-\ell}$. Our implementation accepts input shapes given either as triangular meshes in 3D or polygonal curves in 2D, and the distance functions $\dist_i$ are generated through a straightforward modification of the fast marching method \cite{Set99}, taking advantage of the fact that the grids used are subgrids of a regular cartesian grid. 

A straightforward choice of discretization would be to use multilinear finite elements on the squares or cubes contained in the grid, which is the approach used in \cite{IglBerRumSch13} and \cite{IglRumSch18}. However, this type of discretization has some limitations for our application. The main concern is maintaining the deformations injective. On the one hand the Jacobian determinants that appear numerically (that is, on quadrature points) can be enforced to be positive along the descent by using infinite values of the energy and adequate line search for the descent. However, when refining the grid and interpolating the deformation to the newly created elements, this property might be lost: injectivity of a trilinear transformation on a hexahedral element is not even known to be checkable through simple algebraic conditions \cite{KnaKorSum03}. This means that even if the Jacobian determinants are positive at every quadrature point of the original grid, they might not necessarily be positive at all those of the refined grid, a situation which prevents the multiscale descent from continuing after the refinement. This problem occurs only for very small determinant values (`thin' deformed elements) and therefore it can often be avoided, but without guarantees, by keeping the influence of $E_{\vol}$ relatively high.

In fact, this problem can be completely avoided by splitting each square or cube of the grid in two regular triangles or six tetrahedra respectively, and using linear finite elements on the resulting simplices instead. In this way, the gradients are piecewise constant, and since the elements of the subdivided grid are always completely contained in a coarse element, the Jacobian determinant is preserved when interpolating to the refined grid. This has allowed us to eliminate the mentioned problem with negative determinants, and to emulate the regime $\theta = 1$ by decreasing the influence of the volume term with each refinement. Indeed, in the numerical examples presented we have chosen $\sigma = 2h_{\ell}$ and a coefficient for the volume energy proportional to $\sigma$.

Another difference is that since we focus in symmetry and invertibility, Dirichlet conditions fixing the deformations at the boundary to be the identity have been used. In consequence, the size of the shapes compared with that of the domains should be relatively small so that the fixed boundary values do not affect the matching too much through the volume regularization term. This drawback is mitigated by the use of adaptive grids, since these are only refined around the shapes themselves.

Our implementation of the energy and its derivatives follows the formulas in Lemma \ref{lem:polyconvexity} to minimize the appearance of terms related to $\det(D\phi^{-1})$, which have the potential to introduce large numerical errors when injectivity of the deformations is nearly lost.

\begin{figure*}[ht]
  \centering
  \mbox{} \hfill
  \includegraphics[width=.23\linewidth]{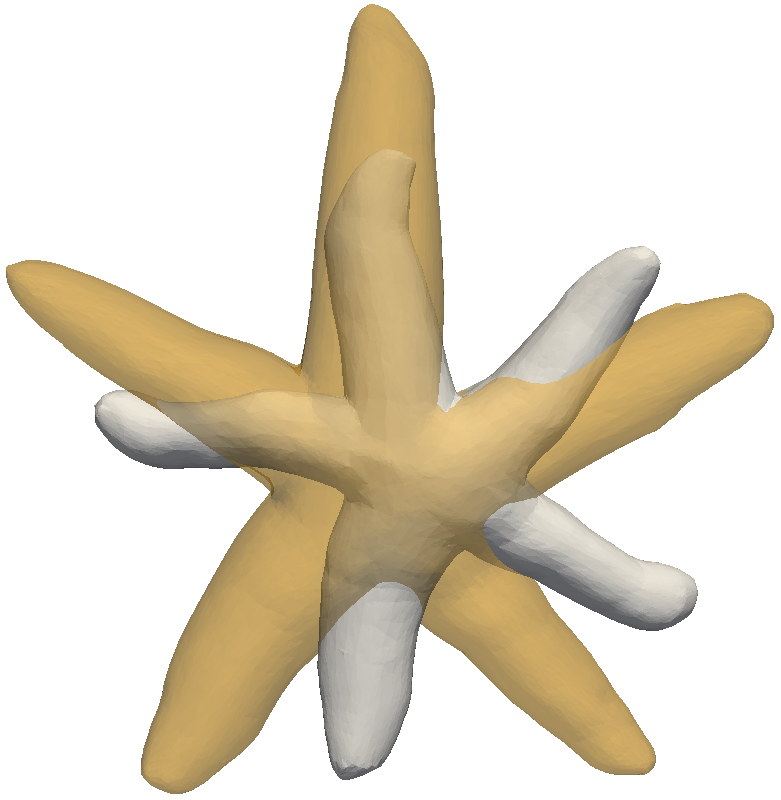}
  \hfill
  \includegraphics[width=.23\linewidth]{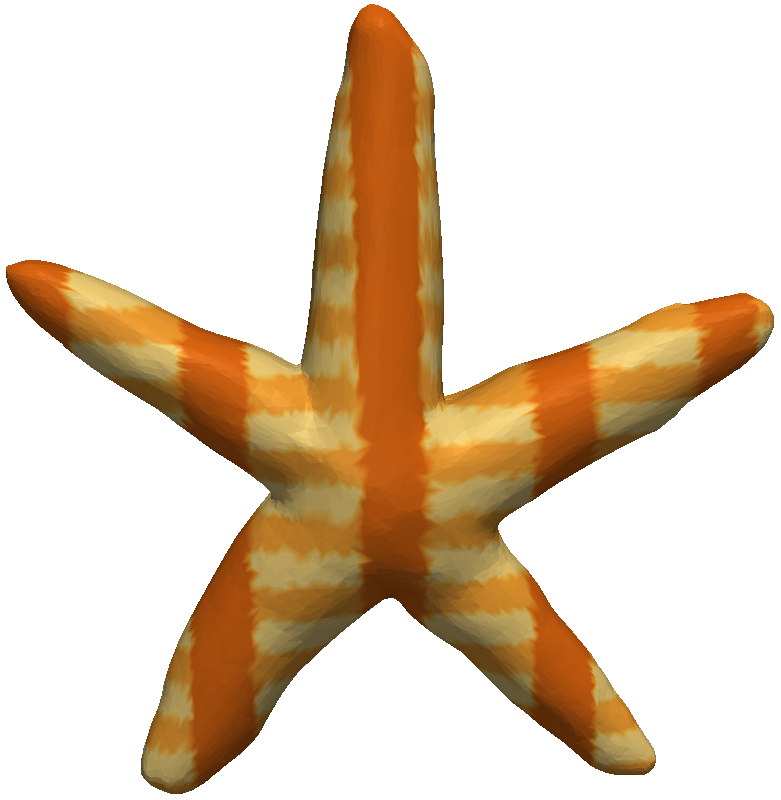}
  \hfill
  \includegraphics[width=.23\linewidth]{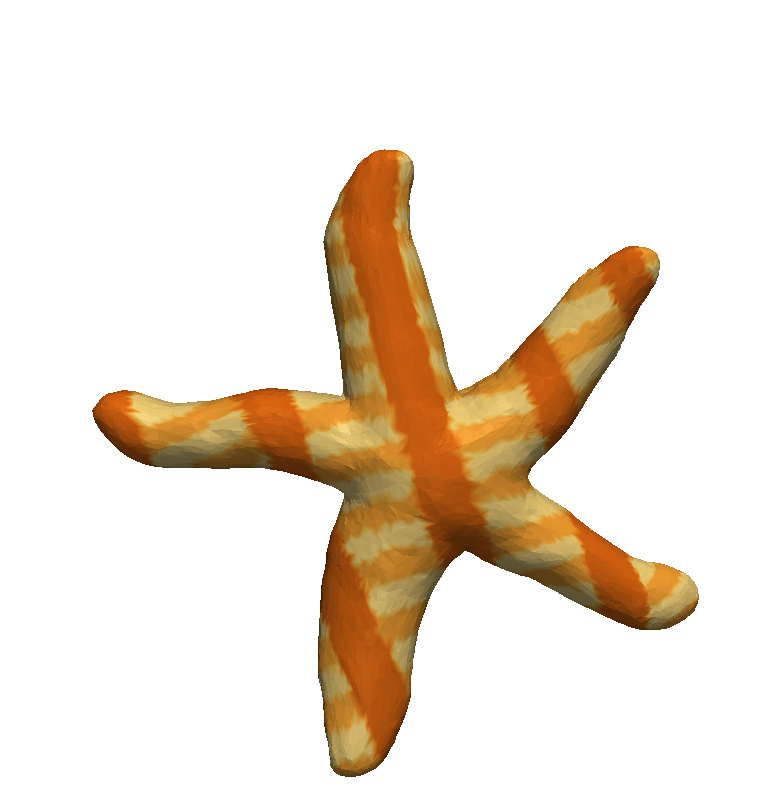}
  \hfill
  \includegraphics[width=.23\linewidth]{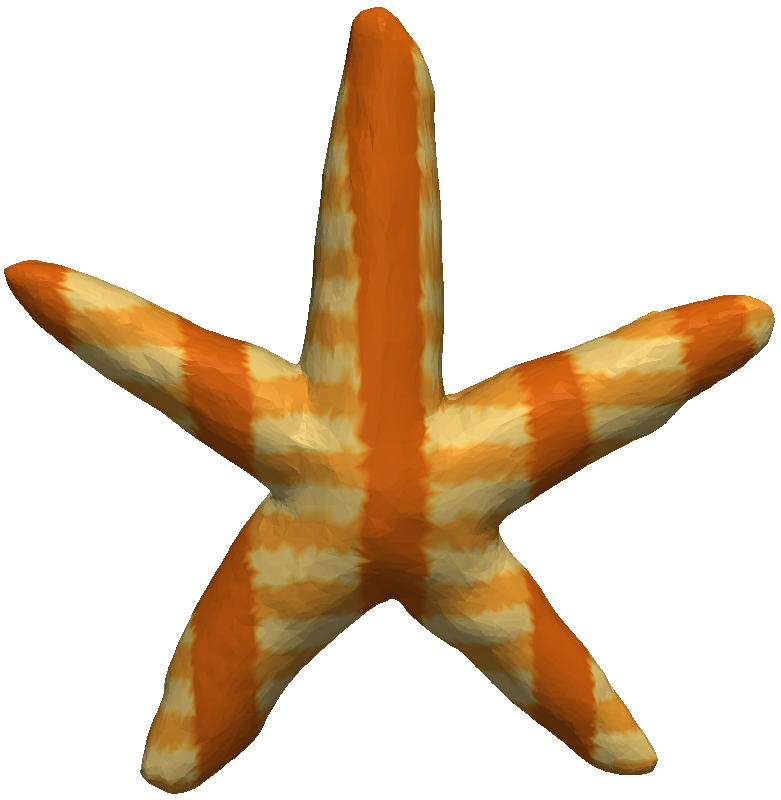}
  \hfill \mbox{}
  \caption{\label{fig:starfish}From left to right: Starfish shapes $\M_1$ (orange) and $\M_2$ (white), textured $\M_1$, deformed shape $\phi(\M_1)$ after level $8$ matching with symmetric energy functional $E^\sigma$, deformed shape $(\psi \circ \phi)(\M_1)$ after subsequently applying level $8$ of the matching with switched data.}
\end{figure*}

\begin{figure*}[ht]
  \centering
  \mbox{} \hfill
  \includegraphics[width=.19\linewidth]{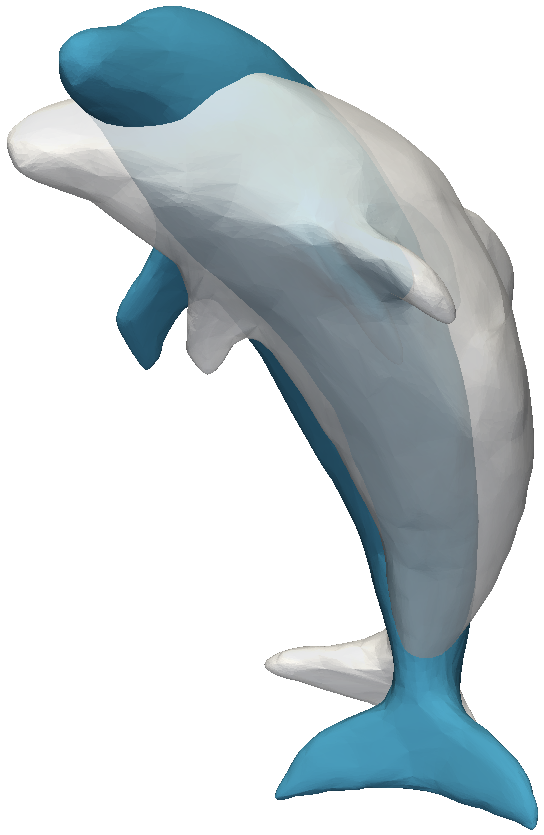}
  \hfill
  \includegraphics[width=.19\linewidth]{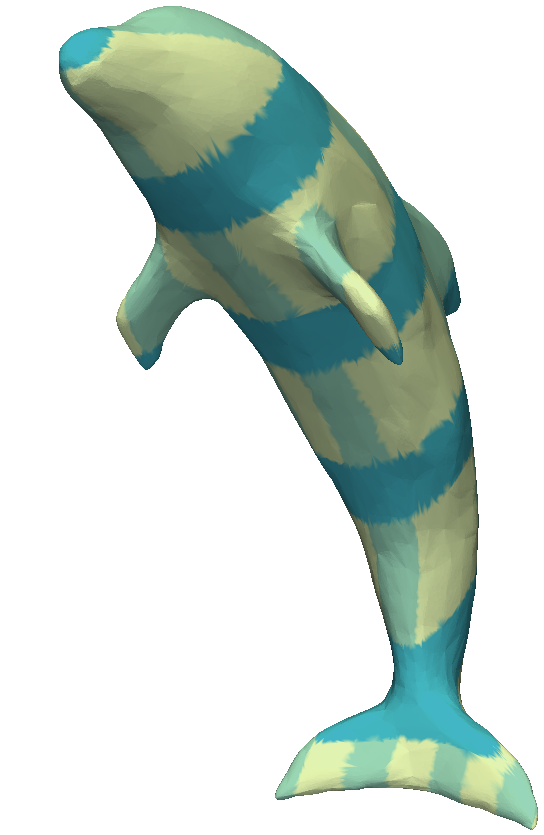}
  \hfill
  \includegraphics[width=.19\linewidth]{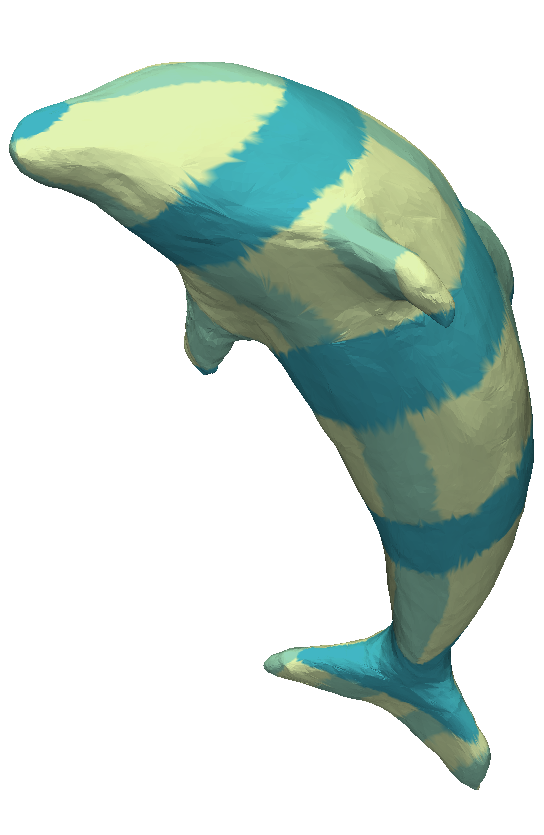}
  \hfill
  \includegraphics[width=.19\linewidth]{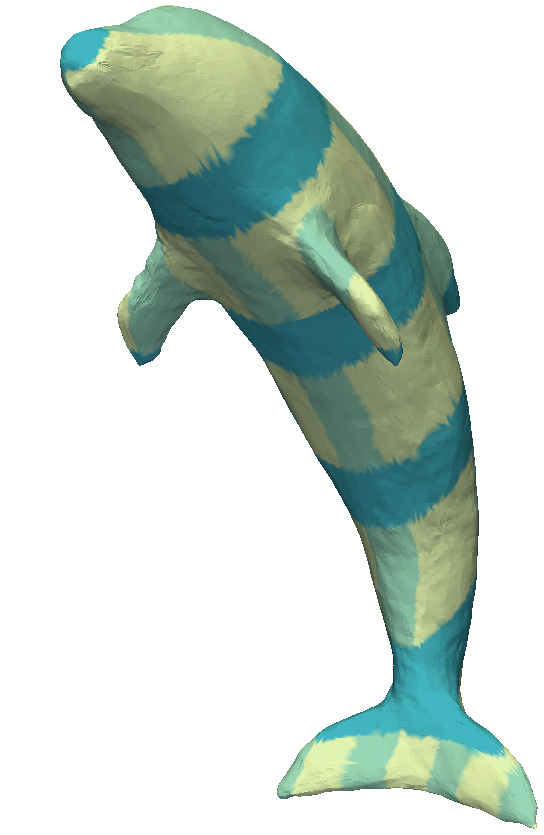}
  \hfill \mbox{}
  \caption{\label{fig:dolphin}From left to right: Dolphin shapes $\M_1$ (blue) and $\M_2$ (white), textured $\M_1$, deformed shape $\phi(\M_1)$ after level $8$ matching with symmetric energy functional $E^\sigma$, deformed shape $(\psi \circ \phi)(\M_1)$ after subsequently applying level $8$ of the matching with switched data.}
\end{figure*}

\subsection{Symmetry in the numerical results}

We have computed several examples both with the novel symmetric energy $E^\sigma$, and with a comparison energy defined only on the direct transformation, but with a volume term that ensures injectivity. Indeed, invertibility of the obtained deformations is required to perform the comparisons in the form proposed. The energy that we compare against is closely related to $\mathcal{E}^\sigma$ of Section \ref{sec:limits} and the one formulated in \cite{IglRumSch18}. It reads
\begin{equation}\label{eq:Ecomparison}
\overline{\mathcal{E}}^\sigma[\phi]:=\mathcal{E}^\sigma_{\match}[\phi]+\mathcal{E}^\sigma_{\mem}[\phi]+\mathcal{E}^\sigma_{\bend}[\phi]+\overline{\mathcal{E}}^ \sigma_{\vol}[\phi], \text{ where }
\end{equation}
\begin{equation*}\label{eq:EcomparisonVol}
\overline{\mathcal{E}}_{\vol}^\sigma[\phi]:=\sigma^\theta\int_\Omega W_{\vol}(D\phi), \text{ with }\begin{cases}W_{\vol}(A)=|A|^3+|\cof A|^3 + 3 (\det A)^{-2}&\text{if }d=3\\W_{\vol}(A)=|A|^2+(\det A)^{-2}&\text{if }d=2,\end{cases}
\end{equation*}
where $\mathcal{E}^\sigma_{\match}, \mathcal{E}^\sigma_{\mem}, \mathcal{E}^\sigma_{\bend}$ are the expressions in \eqref{eq:Edir}, and using the polyconvex density $W$ defined in \eqref{eq:Wbounded}. It can be directly checked that the identity matrix $\IdM \in \R^{d\times d}$ is stationary for $W_{\vol}$ by writing it in terms of singular values. 

The parameters used were identical for both energies and a given shape, as listed in Table \ref{tab:parameters}, with the exception of the different volume density in $\overline{\mathcal{E}}_{\vol}$, but with each volume energy multiplied with the same coefficient $c_\vol$. The energy density $W$ used for all terms of $E^\sigma$ that require it was the one introduced in \eqref{eq:Wbounded}, and we used $\theta=1$ and $q=p=d+1$ replicating the regime analyzed in Section \ref{sec:existence}.

\begin{table}[ht]
\centering
    \begin{tabular}{| l | l | l | l | l | l | l | l | l |}
    \hline
  & $c_{\match}$ & $c_{\vol}$ & $c_{\mem}$ & $c_{\bend}$ & $\sigma$ & $q$ & $\theta$ & $\ell_{\min}, \ell_{\max}$ \\ \hline
Dolphin   & 4.096 & 0.8 & 1.0 & 0.2 & $2^{-\ell + 1}$ & 4 & 1 & 4,8  \\ \hline
Starfish  & 4.096 & 0.8 & 1.0 & 0.2 & $2^{-\ell + 1}$ & 4 & 1 & 4,8  \\ \hline
Jump      & 0.512 & 0.8 & 1.0 & 1.0 & $2^{-\ell + 1}$ & 3 & 1 & 4,9  \\ \hline
    \end{tabular}
    \caption{Parameters used for the numerical examples, where $c_{\match}$, $c_{\vol}$, $c_{\mem}$ and $c_{\bend}$ are multiplicative factors for the corresponding terms of \eqref{eq:Esym} or \eqref{eq:Ecomparison}.}
    \label{tab:parameters}
\end{table}

It is important to notice that, although the energy is symmetric with respect to switching the shapes and taking the inverse of the deformations, the gradient descent procedure is not. Therefore, in practice perfect symmetry can not be expected in the numerical results, and the extent to which it appears depends on not ending up in different local minima, and how closely these minima are approximated by the computation. In any case our numerical experiments show a marked improvement towards symmetry. 

Figure \ref{fig:jump} shows a 2D example of a shape $\M_1$ undergoing first the deformation $\phi$ computed using $E^\sigma$ to match $\M_1$ to $\M_2$, then also the one $\psi$ with switched inputs matching $\M_2$ to $\M_1$, and the corresponding deformed grids. In Figures \ref{fig:starfish} and \ref{fig:dolphin} analogous 3D examples are shown. In each of these cases, being able to exactly realize the symmetry property numerically would result in identical shapes and grids before any deformation and after applying both. 

\begin{figure}[ht]
  \centering
  \mbox{} \hfill
  \includegraphics[width=.38\linewidth]{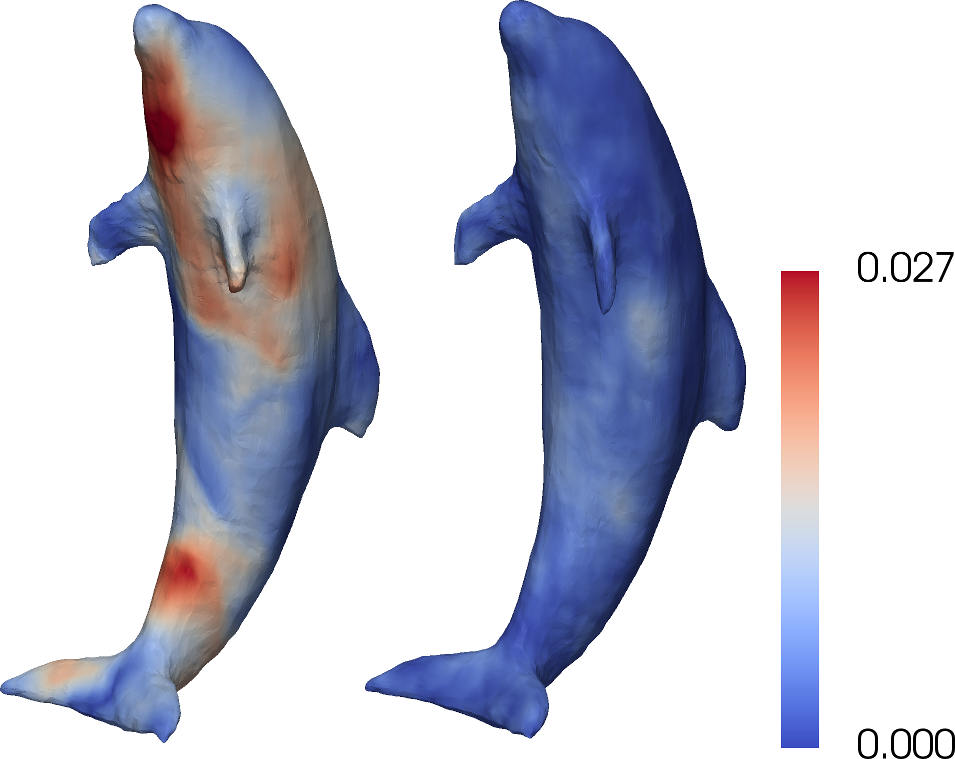}
  \hfill
  \raisebox{.0\height}{\includegraphics[width=.56\linewidth]{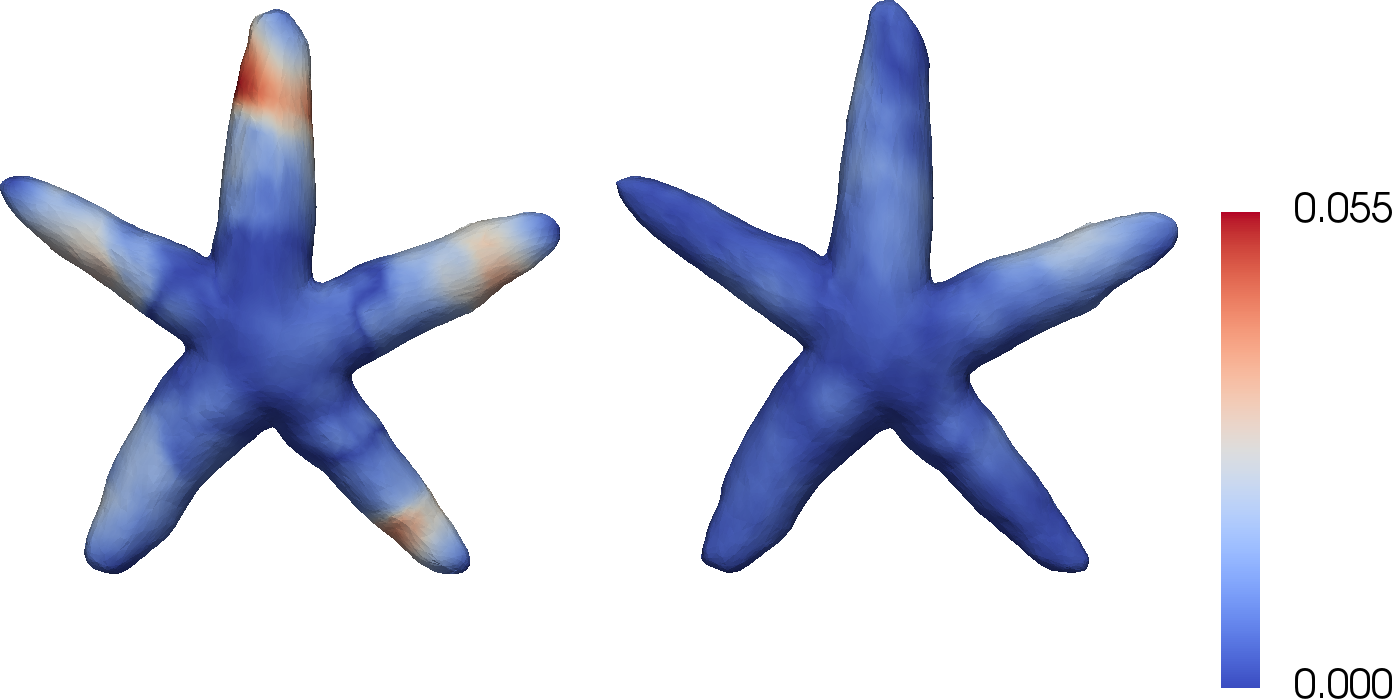}}
  \hfill \mbox{}
    \caption{Pointwise norm of the residual displacement $|\psi \circ \phi - \Id|$, plotted as texture over $( \psi \circ \phi )( \M_1 )$. Left shape for each case: Result with energy $\overline{\mathcal{E}}^\sigma$ not taking into account the inverse, corresponding to (dir) case in Table \ref{tab:inverseError}. Right shape: Result with the symmetric energy $E^\sigma$ (same parameters and descent procedure), corresponding to (sym) case in Table \ref{tab:inverseError}. \\ As expected, only subtle differences appear in the shapes themselves. Most of the erroneous displacement on the surface occurs tangentially and in zones where the largest bending takes place (cf. Figures \ref{fig:starfish} and \ref{fig:dolphin}).}
    \label{fig:inverseError}
\end{figure}

\begin{table}[ht]\centering
    \begin{tabular}{| l | l | l | l | l |}
    \hline
    Case & $\| \psi \circ \phi - \Id \|_{L^2(\Omega)}$ & $\| \psi \circ \phi - \Id \|_{L^\infty(\Omega)}$  & $\text{avg}(| \psi \circ \phi - \Id |, \M_1)$ & $\| \psi \circ \phi - \Id \|_{L^\infty(\M_1)}$ \\ \hline
\text{D, dir} & 0.0299  & 0.0583 & 0.0108  & 0.0570  \\ \hline 
\text{D, sym} & 0.0271  & 0.0561 & 0.00281 & 0.0114  \\ \hline 
\text{S, dir} & 0.0637  & 0.136  & 0.0132  & 0.0546  \\ \hline 
\text{S, sym} & 0.0473  & 0.115  & 0.00570 & 0.0240  \\ \hline 
\text{J, dir} & 0.0715  & 0.141  & 0.0223  & 0.141   \\ \hline 
\text{J, sym} & 0.0419 & 0.0982 & 0.00737 & 0.0982  \\ \hline 
    \end{tabular}
    \caption{Average and maximum norm of the residual displacement $|\psi \circ \phi - \Id|$ at last computation level for the dolphin (D), starfish (S) and jump (J) examples, computed with the energy $\overline{\mathcal{E}}^\sigma$ that penalizes only the direct transformation \eqref{eq:Ecomparison}, (dir) and with the new symmetric energy $E^\sigma$ \eqref{eq:Esym}, (sym).\\
On average larger errors are seen outside the shapes themselves, which is consistent with the decreasing influence of the volume term over the refinements to recreate the regime $\theta=1$.}
    \label{tab:inverseError}
\end{table}
In Figure \ref{fig:inverseError} and Table \ref{tab:inverseError} we quantify the failure of symmetry in these examples by evaluating the distance $|\psi \circ \phi - \Id|$ between the identity and composition of the deformations matching the shapes in opposite orders, when using the symmetric energy $E^\sigma$ and the non-symmetric energy $\overline{\mathcal{E}}^\sigma$ as comparison. Averages on $\M_1$ are computed from evaluation of the finite element functions on the vertices of the triangular meshes or polygons used as input and sub-grid initialization of the fast marching method to compute $\dist_i$, with equal weights for all such points. This avoids having to integrate numerically discrete functions defined on $\Omega$, for which the surface meshing is not compatible. The input surfaces, which are fairly evenly triangulated, are only used for initializing the computation of $\dist_i$ and not in the computation for $\phi$.

\paragraph*{Acknowledgements.}
This work has been supported by the Austrian Science Fund (FWF) within the national research network `Geometry+Simulation', project S11704.
\bibliographystyle{plain}
\bibliography{Igl17}
\end{document}